\theoremstyle{plain}
\newtheorem{theorem}{Theorem}
\newtheorem{lemma}[theorem]{Lemma}
\newtheorem{corollary}[theorem]{Corollary}
\newtheorem{proposition}[theorem]{Proposition}
\theoremstyle{definition}
\newtheorem{definition}{Definition}
\newtheorem{question}{Question}
\newtheorem{remark}[theorem]{Remark}
\theoremstyle{remark}
\newtheorem*{claim}{Claim}
\newcommand{\N}{\mathbb N}
\newcommand{\Z}{\mathbb Z}
\newcommand{\R}{\mathbb R}
\newcommand{\C}{\mathbb C}
\newcommand{\Bai}{\N^{\N}}
\newcommand{\card}{{\rm{card}}}
\newcommand{\pol}{\le_p}
\newcommand{\npol}{\nleq_p}
\newcommand{\lpol}{<_p}
\newcommand{\eqpol}{\equiv_p}
\newcommand{\chpl}{\mathcal F}
\newcommand{\quot}[2]{{\raisebox{.2em}{$#1\!$}\left/\raisebox{-.2em}{$#2$}\right.}}
\title{Reducibility by polynomial functions}
\author{Riccardo Camerlo}
\address{Dipartimento di matematica, Universit\`a di Genova, Via Dodecaneso 35, 16146 Genova --- Italy}
\email{camerlo@dima.unige.it}
\author{Carla Massaza}
\address{Via Muriaglio 19, 10141 Torino --- Italy}
\email{carla.massaza@formerfaculty.polito.it}
\subjclass[2020]{Primary 12E05. Secondary 14A05}
\keywords{Reducibility; polynomial; affine variety; dimension}
\date{}
\begin{document}

\maketitle

\begin{abstract}
We study the preorder $ \pol $ on the family of subsets of an algebraically closed field of characteristic $0$ defined by letting $A \pol B $ if there exists a polynomial $P$ such that $A=P^{-1}(B)$.
\end{abstract}

\tableofcontents

\section{Introduction and plan of the paper}
We let $k$ always denote a commutative field, and $\kappa $ its cardinality; in a moment we shall add the assumption that $k$ is algebraically closed and has characteristic $0$.
Purpose of the paper is to study the following relation between subsets of $k$.

\begin{definition}
For $A,B\subseteq k$, let $A \pol B$ if there is a polynomial function $P:k\to k$ such that
\begin{equation} \label{eqndfnt}
A=P^{-1}(B).
\end{equation}
We then say that $A$ \emph{polynomially reduces} to $B$.
A polynomial $P$ as in \eqref{eqndfnt} is said to \emph{reduce} $A$ to $B$.
Also, let $A \eqpol B$ denote $A \pol B \pol A$, in which case we say that $A,B$ are \emph{polinomially bireducible}.
If $A \pol B$ but $A \not\equiv_pB$, we write $A<_pB$.
\end{definition}

Relation $ \pol $ is an instance of a \emph{reducibility relation}: if $ \mathcal F $ is a class of functions from a set $X$ to itself, containing the identity function and closed under composition, setting
\begin{equation} \label{25102022}
A\le_{ \mathcal F }B\Leftrightarrow\exists f\in \mathcal F,A=f^{-1}(B),
\end{equation}
for $A,B\subseteq X$, defines a preorder on $ \mathcal P (X)$, the powerset of $X$.
This preorder is called $ \mathcal F $-\emph{reducibility} and the function $f$ appearing in \eqref{25102022} is called a \emph{reduction} of $A$ to $B$.

The idea underlying the notion of reducibility is that, whenever the relation $A\le_{ \mathcal F }B$ is witnessed by a reduction $f\in \mathcal F $, the complexity of the set $A$ is bounded by the complexity of the set $B$ since the problem of recognising membership in $A$ is reduced to the same problem for $B$, as $x\in A\Leftrightarrow f(x)\in B$.
This notion of relative complexity depends on the chosen class of reductions $ \mathcal F $, and the most notable example is when $X$ is a topological space and $ \mathcal F $ is the collection of continuous functions on $X$.
Then $\le_{ \mathcal F }$ is called \emph{continuous reducibility}, or \emph{Wadge reducibility}, since the first systematic study of it (for $X=\Bai $) was performed in Wadge's thesis \cite{wadge1983}.

In this paper we are interested in the case our set is an algebraically closed field $k$ of characteristic $0$, and we choose as $ \mathcal F $ the class of polynomial functions on $k$, since these are the basic functions of interest in commutative algebra and algebraic geometry, being the morphisms of the affine variety $k$.
As $ \eqpol $ is an equivalence relation, we still denote $ \pol $ the partial order induced on the equivalence classes, that is on $ \quot{ \mathcal P (k)}{ \eqpol } $.
The $ \eqpol $-equivalence classes $[A]=\{ B\in \mathcal P (k)\mid B \eqpol A\} $ will be called \emph{polynomial classes}.

We focus mainly on the study of $ \pol $ on finite subsets of $k$: this is the content of section \ref{17092022}, which constitutes the core of the paper.
After presenting some general facts, in section \ref{170920221559} we prove that polinomial classes of sets of $n$ elements, for any given $n$, form an affine variety $\Sigma_n$ of dimension $n-2$.
The import of this fact is that it provides a framework to study $ \pol $ quantitatively, by using dimension, which is done in the subsequent sections.
In section \ref{170920221620} we describe $ \eqpol $ on finite sets from a geometrical viewpoint and in section \ref{170920221621} we prove that $\Sigma_3$ is an affine line.
Section \ref{excclasses} is devoted to those polynomial classes that have less representatives than the generic ones.
In this sense, we call them \emph{exceptional classes} and we prove in proposition \ref{170920221714} and corollary \ref{170920221716} that they form a subvariety of $\Sigma_n$ of dimension $ \frac{n-2}2 $ or $ \frac{n-3}2 $, according to whether $n$ is even or odd.
In section \ref{18092022} we prove several results about $ \pol $ on finite sets.
Given a $n$-element subset $A$ of $k$, we give conditions for a $m$-element set to be comparable with $A$ and we prove that the family of polynomial classes of $m$-element sets comparable with $[A]$ is a subvariety of $\Sigma_m$ of which we compute the dimension (proposition \ref{180920220929} and theorems \ref{bidimvar} and \ref{030820221312}).
In theorem \ref{propactwo} we show that most polynomial classes of finite sets are $ \pol $-maximal below the class of $1$-element sets, thus providing a big antichain for $ \pol $.

In section \ref{180920220956} we gather some results about $ \pol $ on sets that are both infinite and coinfinite.
We show that there are $2^{\kappa }$ polynomial classes of sets of cardinality $\kappa $ and whose complement has cardinality $\kappa $ that are $ \pol $-maximal (theorem \ref{maximalinfinite}).
Moreover, if $k$ is countable, most subsets of $k$ are $ \pol $-maximal from the Baire category point of view, in the sense that they form a comeagre subset of the powerset of $k$ (theorem \ref{050420221932}).
Finally, when $k\subseteq \C $, we construct a chain of polynomial classes of order type $\zeta $ ---the order type of the integers--- in proposition \ref{longzchain}.

To establish one of our main results, we need in lemma \ref{020820221452} that a specific matrix, which is an enrichment of the Vandermonde matrix, is non-singular.
This observation, which we could not find in the existing literature, might be interesting on its own and its proof is given in a separate section (section \ref{310720221554}).

\begin{remark} \label{firstremarks}
\begin{enumerate}
\item Denote by $Pol$ the monoid of polynomial functions on $k$ under the operation of composition.
The preorder $ \pol $ is induced by the right action of $Pol$ on $ \mathcal P (k)$ defined by $(A,P)\mapsto P^{-1}(A)$.
\item There are two incomparable polynomial classes that precede every other polynomial class: $[\emptyset ]=\{\emptyset\} $ and $[k]=\{ k\} $.
Indeed, if $A$ is a proper subset of $k$, then any constant polynomial with value outside $A$ is a reduction of $\emptyset $ to $A$; similarly, $k$ is polynomially reducible to any non-empty set $A$.
\end{enumerate}
\end{remark}

Polynomial functions are continuous with respect to the Zariski topology on $k$ (which is the cofinite topology), so it follows that
\begin{equation} \label{wadgevspol}
\forall A,B\in \mathcal P (k),A\le_pB\Rightarrow A\le_WB
\end{equation}
where $\le_W$ is the relation of Wadge reducibility with respect to the Zariski topology.
The relation  $\le_W$ for Zariski topologies has been studied in \cite{cammas}: for example, it is noted there that non-empty, proper, closed subsets of $k$ (when $k$ is infinite these are exactly the finite non-empty sets) form a single class with respect to the equivalence relation associated to $\le_W$.
Proposition \ref{orderedfields} below shows that for some fields $k$ the relation $ \pol $ can coincide with $\le_W$ on finite sets.
However, purpose of this note is to illustrate that in general the situation is quite different.

\begin{proposition} \label{orderedfields}
Let $A$ be a finite, non-empty subset of $k$.
Then $A \pol \{ 0\} $.

If $k$ is an ordered field, then $\{ 0\} \pol A$ holds as well.
Consequently, if $k$ is an ordered field then $A \eqpol B$ for any finite, non-empty subsets $A,B$.
\end{proposition}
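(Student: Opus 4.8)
The plan is to handle the two assertions in turn. For the first, write $A=\{a_1,\dots,a_n\}$ and take the polynomial $P(x)=(x-a_1)(x-a_2)\cdots(x-a_n)$. Then $P(x)=0$ precisely when $x$ is one of the $a_i$, so $P^{-1}(\{0\})=A$, witnessing $A\pol\{0\}$. This requires nothing beyond the fact that $k$ is a field (so a product vanishes iff one factor does) and uses $A$ finite to make $P$ an honest polynomial.

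For the second assertion, assume $k$ is an ordered field. I want a polynomial $Q$ with $Q^{-1}(A)=\{0\}$, i.e.\ $Q(0)\in A$ and $Q(x)\notin A$ for all $x\neq 0$. The idea is to first send $0$ to some fixed element of $A$, say $a_1$, and arrange that no other point hits $A$. Concretely, set $Q(x)=a_1+\bigl(\text{something}\bigr)\cdot x^2$ where the extra factor forces the values at $x\neq0$ to escape the finite set $A$. Since the field is ordered, $x^2>0$ for $x\neq 0$ and $x^2=0$ only at $0$; if I let $c$ be larger than $\max_i|a_i - a_1|$ (using that $A$ is finite, so this maximum exists in $k$), then $Q(x)=a_1+c\,x^2$ satisfies $Q(x)\geq a_1$ with equality only at $x=0$, and for $x\neq 0$ one checks $Q(x)-a_1 = c\,x^2$ is either $0$ (impossible) or has absolute value at least $c\cdot(\text{smallest positive square})$ — but squares can be arbitrarily small, so I need to be a little more careful. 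The cleaner route: note $Q$ is continuous in the order topology and monotone on $x\geq 0$, so for $x\neq 0$ I must instead argue that the values $Q(x)$ for $x\neq 0$ avoid the finitely many points of $A$. Actually the sharpest approach is to use that $x\mapsto a_1+x^2$ already has the property that its only solutions to $a_1+x^2=a_i$ are $x=\pm\sqrt{a_i-a_1}$ when $a_i-a_1$ is a nonnegative square, and then precompose with a polynomial that separates these finitely many bad points from $0$; but even simpler, one can pick the reduction to be $x\mapsto a_1 + \prod_j(x^2 - r_j)^2$ or similar, tailored so that the preimage of each $a_i$ is computed explicitly.

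The main obstacle is precisely this second step: in an ordered (but not necessarily complete or real-closed) field, I cannot freely take square roots, so I cannot simply "solve" $Q(x)=a_i$. The key is that whichever polynomial $Q$ I write down, the equation $Q(x)=a_i$ has only finitely many solutions in $k$ (as $Q-a_i$ is a nonzero polynomial), so the set $S=\bigcup_i Q^{-1}(\{a_i\})=Q^{-1}(A)$ is finite; I then only need to guarantee $0\in S$ and $S\subseteq\{0\}$, and the finiteness plus the ordering should let me shift and scale $Q$ by a constant to push every nonzero element of $S$ off of $A$ while keeping $0$ mapped into $A$. Finally, the consequence is immediate: for finite nonempty $A,B$ we have $A\pol\{0\}\pol B$ and $B\pol\{0\}\pol A$ by transitivity of $\pol$, hence $A\eqpol B$.
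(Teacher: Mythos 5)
Your first assertion is proved exactly as in the paper: $P(X)=\prod_{a\in A}(X-a)$ satisfies $P^{-1}(\{0\})=A$, and nothing more is needed. The second assertion, however, is not actually proved. You correctly guess the shape $Q(x)=a_1+cx^2$, but you never fix which element $a_1\in A$ to base it at, and you never rule out the equations $Q(x)=a_i$ for the other $a_i\in A$. Each of your successive patches is left incomplete: taking $c$ large does not help (as you yourself notice, squares can be arbitrarily small in a general ordered field); the ``precompose to separate bad points'' and $\prod_j(x^2-r_j)^2$ ideas are not carried out; and the final ``shift and scale $Q$ to push every nonzero element of $S$ off of $A$'' is circular as stated, because modifying $Q$ changes the very set $S=Q^{-1}(A)$ you are trying to control, and in an ordered field that is neither complete nor real-closed you have no intermediate-value or root-extraction tools to force the process to terminate.

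The missing idea is a one-line choice, and it is what the paper does: since $A$ is finite and $k$ is ordered, $A$ has a maximum element $a=\max A$, and $Q(X)=X^2+a$ already works with $c=1$. Indeed $x^2\ge 0$ for all $x$, with equality if and only if $x=0$, so $Q(x)\ge a$ for every $x$; since every element of $A$ is $\le a$, the condition $Q(x)\in A$ forces $Q(x)=a$, hence $x=0$. Equivalently, for $b\in A$ with $b\ne a$ the equation $Q(x)=b$ reads $x^2=b-a<0$ and has no solution. Thus $Q^{-1}(A)=\{0\}$, and the concluding bireducibility of any two finite non-empty sets then follows by transitivity exactly as you say.
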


\begin{proof}
The polynomial $P(X)=\prod_{a\in A}(X-a)$ reduces $A$ to $ \{ 0\} $.
If $k$ is an ordered field, let $a=\max A$; then $Q(X)=X^2+a$ reduces $\{ 0\} $ to $A$.
\end{proof}

\fbox{From now on $k$ is algebraically closed and has characteristic $0$.}

\begin{remark} \label{04042022732}
The assumption on $k$ implies in particular that:
\begin{enumerate}
\item $k$ is infinite, so this allows to identify polynomials and polynomial functions.
\item Every non-constant polynomial function is surjective; therefore if the non-constant polynomial $P$ reduces $A$ to $B$, it also follows that $P(A)=B$.
\end{enumerate}
\end{remark}

\begin{proposition} \label{cardinality}
Let $A\in \mathcal P (k)\setminus\{\emptyset ,k\} $.
If $A \pol B$, then either $A,B$ are both finite, in which case $ \card (A)\ge \card (B)$, or $ \card (A)= \card (B)$.
Consequently, for any $A,B$, if $A \eqpol B$ then $ \card (A)= \card (B)$.
\end{proposition}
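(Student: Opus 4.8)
The plan is to reduce everything to the structure of a non-constant polynomial over an algebraically closed field. First I would note that since $A\neq\emptyset$ and $A\neq k$, any polynomial $P$ witnessing $A\pol B$ must be non-constant: a constant polynomial has $P^{-1}(B)$ equal to $\emptyset$ or to all of $k$, according to whether its value lies in $B$. By remark \ref{04042022732} a non-constant $P$ is then surjective, so that $P(A)=B$; moreover each fibre $P^{-1}(b)$ is a non-empty finite set of size at most $\deg P$. This lets me write $A=P^{-1}(B)=\bigcup_{b\in B}P^{-1}(b)$ as a disjoint union of non-empty finite sets indexed by $B$.

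From this decomposition the first assertion follows by splitting on whether $B$ is finite. If $B$ is finite, then $A$ is a finite union of finite sets, hence finite, and $\card(A)=\sum_{b\in B}\card(P^{-1}(b))\ge\card(B)$ since every summand is at least $1$. If $B$ is infinite, then on the one hand $P$ restricts to a surjection $A\to B$, whence $\card(A)\ge\card(B)$, and on the other hand $\card(A)\le\card(B)\cdot\deg P=\card(B)$; so $\card(A)=\card(B)$. In particular, when $B$ is infinite so is $A$, so no case other than these two can occur.

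For the final sentence, assume $A\eqpol B$. The degenerate values $\emptyset$ and $k$ must be handled separately: if $A=\emptyset$ then $B\pol A$ gives $B=g^{-1}(\emptyset)=\emptyset$, and if $A=k$ then $B\pol A$ gives $B=g^{-1}(k)=k$, so in both cases $\card(A)=\card(B)$. Otherwise $A\notin\{\emptyset,k\}$, and then $A\pol B$ forces $B\notin\{\emptyset,k\}$ as well, since $g^{-1}(\emptyset)=\emptyset$ and $g^{-1}(k)=k$ for every $g$. Applying the first part both to $A\pol B$ and to $B\pol A$: if either instance falls in the alternative ``$\card$ equal'' we are done; otherwise both fall in the ``both finite'' alternative, yielding $\card(A)\ge\card(B)$ and $\card(B)\ge\card(A)$, hence $\card(A)=\card(B)$.

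I do not expect a serious obstacle here. The only points needing care are ruling out constant reductions via the hypothesis $A\notin\{\emptyset,k\}$, the cardinal-arithmetic identity $\card(B)\cdot\deg P=\card(B)$ for infinite $B$ together with the attendant observation that a finite $B$ cannot be the image of an infinite $A$ under a non-constant polynomial, and separately dispatching the classes $[\emptyset]$ and $[k]$ in the ``consequently'' clause, which the first part deliberately excludes.
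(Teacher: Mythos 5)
Your proof is correct and follows the same idea as the paper, whose entire proof is the one-line observation that non-constant polynomials over $k$ are surjective and have finite fibres; you have simply written out the fibre decomposition, the cardinal arithmetic, and the degenerate cases $\emptyset$, $k$ that the paper leaves implicit. No issues.
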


\begin{proof}
This holds since non-constant polynomials are surjective and every element has finitely many preimages.
\end{proof}

For $\lambda\le\kappa $ let
\[
\mathcal P_{\lambda }=\{ A\in \mathcal P (k)\mid \card (A)=\lambda\} ,\qquad \check{ \mathcal P }_{\lambda } =\{ k\setminus A\}_{A\in \mathcal P_{\lambda }}.
\]
By proposition \ref{cardinality}, every $ \mathcal P_{\lambda }$ and $ \check{ \mathcal P }_{\lambda }$, as well as $ \mathcal P_{\kappa }\cap \check{ \mathcal P }_{\kappa }$, is invariant with respect to $\eqpol $.
Recall that $ \card ( \mathcal P_{\lambda })= \card ( \check{ \mathcal P }_{\lambda })=\kappa^{\lambda }$ (see \cite[exercise I.13.20]{kunen2012}) and that $ \card ( \mathcal P_{\kappa }\cap \check{ \mathcal P }_{\kappa })=2^{\kappa }$.

We denote $\Sigma_{\lambda }=\{ [A]\mid A\in \mathcal P_{\lambda }\} $: this is the quotient of $ \mathcal P_{\lambda }$ with respect to the restriction of $ \eqpol $.
Similarly, define $ \check{\Sigma }_{\lambda }=\{ [A]\mid A\in \check{ \mathcal P }_{\lambda }\} $.
The results on polynomial reducibility for $ \mathcal P_{\lambda },\Sigma_{\lambda }$ transfer to $ \check{ \mathcal P }_{\lambda }, \check{\Sigma }_{\lambda }$, respectively, via the bijection $A\mapsto k\setminus A$ and the fact that $A \pol B\Leftrightarrow k\setminus A \pol k\setminus B$.

\subsection{Some terminology and notation}
The terminology and notation that we employ throughout the paper is quite standard: main references are \cite{hartsh1977,Harris1992,perrin2008,coxlit2015}.
We collect here a list of some common notions we use.

\begin{itemize}
\item We denote $\deg (P)$ the degree of a polynomial $P$.
A \emph{linear polynomial} is a polynomial of degree $1$, that is, of the form $cX+c'$ where $c\ne 0$.
We denote $ \mathcal L $ the set of linear polynomials; this is a group under composition.
A \emph{quadratic polynomial} is a polynomial of degree $2$, that is, of the form $cX^2+c'X+c''$ where $c\ne 0$.
If $P(X)=cX+c'$ is a linear polynomial, then repeated applications of $P$ give
\begin{equation} \label{2012202218}
P^j(X)=c^jX+(c^j+\dots +c+1)c';
\end{equation}
consequently, if moreover $c\ne 1$, then
\begin{equation} \label{201220221809}
P^j(X)=c^jX+ \frac{c^j-1}{c-1} c'.
\end{equation}
\item A polynomial $h\in k[X_1,\ldots ,X_n]$ is \emph{translation invariant} if
\[
\forall a_1,\ldots ,a_n,t\in k,h(a_1,\ldots ,a_n)=h(a_1+t,\ldots ,a_n+t).
\]
\item If $ \mathcal S \subseteq k[X_1,\ldots ,X_n]$, we denote $V( \mathcal S )=\{ (a_1,\ldots ,a_n)\in k^n\mid\forall f\in \mathcal S ,f(a_1,\ldots ,a_n)=0\} $; when $f_1,\ldots ,f_m\in k[X_1,\ldots ,X_n]$ we also let $V(f_1,\ldots ,f_m)=V(\{ f_1,\ldots ,f_m\} )$.
Sets isomorphic to some $V( \mathcal S )$ are \emph{affine varieties}.

Note that by our definition an affine variety may be empty.
\item If $f\in k[X_1,\ldots ,X_n]$, the set $D(f)=k^n\setminus V(f)$ is a \emph{standard open set}: it consists of all elements of $k^n$ where $f$ does not vanish.
By \cite[proposition III.3.3]{perrin2008}, $D(f)$ is an affine variety endowed with the restriction of the sheaf of regular functions of $k^n$.
\item If $V\subseteq k^n$ is an affine variety, we denote $I(V)=\{ f\in k[X_1,\ldots ,X_n]\mid\forall (a_1,\ldots ,a_n)\in V,f(a_1,\ldots ,a_n)=0\} $ the ideal of $V$.
\item We denote $Sym(A)$ the symmetric group on a set $A$; we let also $Sym_n=Sym (\{ 1,\ldots ,n\} )$.
\item If $\alpha $ is a group action and $a$ is an element of the set acted upon, we denote $Stab_a^{\alpha }$ the stabiliser of $a$ under action $\alpha $.
\end{itemize}

\section{Reducibility on finite sets} \label{17092022}
From \eqref{wadgevspol} it follows that if $B$ is finite, so closed in the Zariski topology, and $A \pol B$, then either $A=k$ or $A$ is finite.
Thus finite sets, together with $k$, constitute an initial segment of $ \pol $.

\begin{proposition} \label{incrcard}
\begin{enumerate}
\item If $1\le n<m,A\in \mathcal P_n,B\in \mathcal P_m$, then $A \npol B$.
\item For every $n\ge 1$, every $A\in \mathcal P_n$, and every $B\in \mathcal P_1$,  it holds that $A \pol B$.
\item $\forall A,B\in \mathcal P_2,A \eqpol B$.
\end{enumerate}

In particular, $\Sigma_1$ and $\Sigma_2$ both consist of a single element.
\end{proposition}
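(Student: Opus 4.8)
The three items are short and essentially independent, so I would treat them in turn. \emph{Item (1)} is immediate from proposition \ref{cardinality}: suppose $A\in\mathcal P_n$, $B\in\mathcal P_m$ and $A\pol B$; since $A\notin\{\emptyset ,k\}$ and both $A,B$ are finite, proposition \ref{cardinality} forces $n=\card(A)\ge\card(B)=m$, contradicting $n<m$, so $A\npol B$.

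\emph{Item (2)} is handled by exhibiting a reduction, exactly as in proposition \ref{orderedfields}. Writing $B=\{ b\} $, put $P(X)=\prod_{a\in A}(X-a)+b$. This is a non-constant polynomial of degree $n\ge 1$, and for $x\in k$ we have $P(x)=b$ iff $\prod_{a\in A}(x-a)=0$ iff $x\in A$ (using that $k$ has no zero divisors). Hence $P^{-1}(B)=P^{-1}(\{ b\} )=A$, i.e. $A\pol B$.

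\emph{Item (3)} rests on the fact that the group $\mathcal L$ of linear polynomials acts sharply $2$-transitively on $k$. Given $A=\{ a_1,a_2\} $ and $B=\{ b_1,b_2\} $ with $a_1\ne a_2$ and $b_1\ne b_2$, set
\[
L(X)=b_1+\frac{b_2-b_1}{a_2-a_1}\,(X-a_1).
\]
The leading coefficient $\frac{b_2-b_1}{a_2-a_1}$ is nonzero, so $L\in\mathcal L$; moreover $L(a_1)=b_1$ and $L(a_2)=b_2$, whence $L(A)=B$. Since $L$ is a bijection of $k$, we obtain $L^{-1}(B)=L^{-1}(L(A))=A$, that is $A\pol B$; exchanging $A$ and $B$ gives $B\pol A$, so $A\eqpol B$.

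For the final clause, applying item (2) with $n=1$ shows that any two $1$-element sets are mutually $\pol$-reducible, hence $\eqpol$-equivalent, so $\Sigma_1$ is a single point; item (3) gives the same for $\mathcal P_2$, so $\Sigma_2$ is a single point as well. I do not expect any genuine obstacle here; the only points needing a word of care are that the polynomial in (2) is non-constant, and that in (3) one really gets the equality $L^{-1}(B)=A$ and not merely the inclusion $A\subseteq L^{-1}(B)$ — which is precisely where the injectivity of a linear polynomial enters, and which is exactly what fails to be available for $n\ge 3$, explaining why the argument does not generalise.
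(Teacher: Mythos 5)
Your proof is correct and follows essentially the same route as the paper: item (1) via proposition \ref{cardinality}, item (2) via the polynomial $\prod_{a\in A}(X-a)+b$, and item (3) via the unique linear polynomial sending $a_1,a_2$ to $b_1,b_2$ (you write it explicitly where the paper solves the corresponding $2\times 2$ linear system, and you deduce $L^{-1}(B)=A$ from injectivity where the paper counts preimages — the same observation in two guises). No gaps.
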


\begin{proof}
(1) This is proposition \ref{cardinality}.

(2) Let $B=\{ b\} $.
Then the polynomial $\prod_{a\in A}(X-a)+b$ reduces $A$ to $B$.

(3) Let $A=\{ a_1,a_2\} ,B=\{ b_1,b_2\} $, in order to show the existence of a polynomial $P:k\to k$, in fact a linear one, such that $A=P^{-1}(B)$.
Let $P(X)=cX+c'$.
The coefficients of such a polynomial can be found as the solutions of the linear system
\begin{equation} \label{eqnincrcard} \left ( \begin{matrix}
a_1 & 1 \\
a_2 & 1 \\
\end{matrix} \right ) \left ( \begin{matrix}
c \\
c'
\end{matrix} \right )
= \left ( \begin{matrix}
b_1 \\
b_2
\end{matrix} \right )
.
\end{equation}
This shows that $A\subseteq P^{-1}(B)$; the fact that $P$ cannot be constant implies that $P^{-1}(B)$ has cardinality $2$ and yields $A=P^{-1}(B)$.

As the polynomial $P^{-1}$ reduces $B$ to $A$, the assertion is proved.
\end{proof}

\begin{proposition} \label{linearreduction}
Let $n\ge 2$.
If $A,B\in \mathcal P_n$ and $P$ is a polynomial such that $A=P^{-1}(B)$, then $P$ is linear.
\end{proposition}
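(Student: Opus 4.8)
The plan is to combine a fibre-counting argument with an elementary fact about polynomials over an algebraically closed field of characteristic $0$. First I would dispose of the constant case: since $n\ge 2$ the set $A$ has at least two elements, while the preimage of any set under a constant polynomial is either $\emptyset $ or $k$, and $k$ is infinite; so $P$ is non-constant. By remark \ref{04042022732} it is then surjective, $P(A)=B$, and every fibre $P^{-1}(b)$ with $b\in B$ is non-empty.

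Next, observe that $A=P^{-1}(B)=\bigcup_{b\in B}P^{-1}(b)$ is a disjoint union of the $\card (B)=n$ non-empty sets $P^{-1}(b)$; since $\card (A)=n$ as well, each fibre $P^{-1}(b)$ must be a singleton, say $P^{-1}(b)=\{ a_b\} $. Writing $d=\deg (P)$ and $c$ for the leading coefficient of $P$, the fact that $P(X)-b$ has $a_b$ as its only root, over an algebraically closed field, forces $P(X)-b=c(X-a_b)^d$, the root occurring with full multiplicity $d$.

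Finally, suppose for contradiction that $d\ge 2$. Choosing two distinct $b_1,b_2\in B$ (possible since $n\ge 2$) and subtracting the two identities $P(X)=c(X-a_{b_i})^d+b_i$ gives
\[
b_2-b_1=c\bigl((X-a_{b_1})^d-(X-a_{b_2})^d\bigr).
\]
If $a_{b_1}=a_{b_2}$ the right-hand side vanishes, contradicting $b_1\ne b_2$. If $a_{b_1}\ne a_{b_2}$, then $(X-a_{b_1})^d-(X-a_{b_2})^d$ has degree exactly $d-1$: the $X^d$ terms cancel and the coefficient of $X^{d-1}$ is $d(a_{b_2}-a_{b_1})$, which is nonzero because the characteristic is $0$; as $d-1\ge 1$, this contradicts the right-hand side being a nonzero constant. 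Hence $d\le 1$, and since $P$ is non-constant, $d=1$; that is, $P$ is linear.

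The counting step is immediate, so the only real content is the last paragraph, and its crux is the observation that over an algebraically closed field of characteristic $0$ a polynomial of degree $d\ge 2$ can have at most one value whose fibre is a single point (namely the image of the unique totally ramified point), whereas reducing an $n$-element set to an $n$-element set with $n\ge 2$ would require at least two such values.
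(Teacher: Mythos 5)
Your proof is correct and follows essentially the same route as the paper's: both reduce to the observation that each fibre $P^{-1}(\{b\})$ must be a singleton, so $P(X)-b_j=c(X-a_j)^{\gamma}$ with full multiplicity, and then derive a contradiction for $\gamma\ge 2$ by comparing the coefficients of degree $\gamma -1$ in two such factorisations (nonzero in characteristic $0$). Your version merely makes explicit the counting step and the non-constant case that the paper leaves implicit.
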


\begin{proof}
Let $A=\{ a_1,\ldots ,a_n\} ,B=\{ b_1,\ldots ,b_n\} $, where it can be assumed that $P(a_j)=b_j$ for every $j$.
Suppose, toward contradiction, that $P$ has degree $\gamma >1$.
Then each equation $P(x)-b_j=0$ has $a_j$ as its unique solution, with multiplicity $\gamma $, so that $P(X)-b_j=c(X-a_j)^{\gamma }$ for some $c\ne 0$.
This means that for every $j,j'$, the polynomial $c(X-a_j)^{\gamma }-c(X-a_{j'})^{\gamma }$ is constant, which is not the case for $j\ne j'$ as the coefficient of degree $\gamma -1$ is $c\gamma a_{j'}-c\gamma a_j\ne 0$.
\end{proof}

\begin{corollary} \label{reductioniffequivalence}
If $A,B\in \mathcal P_n$ are such that $A \pol B$, then $A \eqpol B$.
In other words, the elements of $\Sigma_n$ are pairwise $ \pol $-incomparable.
\end{corollary}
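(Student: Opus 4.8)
The plan is to reduce everything to Proposition \ref{linearreduction}. First I would dispose of the case $n=1$: by Proposition \ref{incrcard} the quotient $\Sigma_1$ is a single element, so any two sets in $\mathcal P_1$ are automatically $\eqpol$-equivalent and there is nothing to prove. So assume $n\ge 2$, and suppose $A,B\in \mathcal P_n$ with $A\pol B$, witnessed by a polynomial $P$ such that $A=P^{-1}(B)$.

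By Proposition \ref{linearreduction} the reduction $P$ is linear, i.e. $P\in \mathcal L$. Since $\mathcal L$ is a group under composition, $P$ has a compositional inverse $P^{-1}\in \mathcal L$, which in particular is again a polynomial function $k\to k$. As a linear polynomial, $P$ is a bijection of $k$, so applying $P$ to the equality $A=P^{-1}(B)$ (where $P^{-1}(B)$ denotes the $P$-preimage) gives $B=P(A)$. On the other hand, since $P$ is the compositional inverse of $P^{-1}$, the $P^{-1}$-preimage of $A$ is exactly $P(A)$; that is, $B=(P^{-1})^{-1}(A)$. Hence the polynomial $P^{-1}$ reduces $B$ to $A$, so $B\pol A$, and therefore $A\eqpol B$. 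This is the same move already used in the last line of the proof of Proposition \ref{incrcard}(3).

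The final assertion is just a reformulation. By definition $\Sigma_n$ is the quotient of $\mathcal P_n$ by $\eqpol$; we have shown that whenever two of its representatives are $\pol$-comparable they are $\eqpol$-equivalent, hence equal as elements of $\Sigma_n$. Equivalently, distinct elements of $\Sigma_n$ are pairwise $\pol$-incomparable.

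I do not anticipate any genuine obstacle: all the work is carried by Proposition \ref{linearreduction}, and the only point needing a little care is keeping the notation $P^{-1}$ for the compositional inverse of a linear polynomial distinct from the preimage operation, together with remembering to treat $n=1$ separately since Proposition \ref{linearreduction} is stated only for $n\ge 2$.
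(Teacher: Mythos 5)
Your proposal is correct and follows essentially the same route as the paper: dispose of $n=1$ via Proposition \ref{incrcard}, invoke Proposition \ref{linearreduction} to see the reduction is linear, and then use the compositional inverse (which exists since $\mathcal L$ is a group) as the reverse reduction. Your extra care in distinguishing the compositional inverse $P^{-1}$ from the preimage operation is a point the paper's one-line proof leaves implicit, but the argument is identical in substance.
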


\begin{proof}
By proposition \ref{incrcard}(2), it can be assumed that $n\ge 2$.
If $A \pol B$, by proposition \ref{linearreduction} there is a linear polynomial $P$ such that $A=P^{-1}(B)$.
Then $P^{-1}$ is a polynomial, in fact a linear one, reducing $B$ to $A$.
\end{proof}

Therefore, if $A\in \mathcal P_n$, the polynomial class $[A]$ is the family of all images of $A$ under some linear polynomial.
This leads to the following.

\begin{definition} \label{20112022}
Let $\alpha $ be the action of $ \mathcal L $ on $ \mathcal P (k)$ defined by $\alpha (P,A)=P(A)$.
\end{definition}

We also denote $\alpha $ the restriction of this action to a fixed $ \mathcal P_n$.

\begin{corollary} \label{coractionalpha}
The restriction $ \eqpol^n$ of $ \eqpol $ to $ \mathcal P_n$ is the orbit relation of the action $\alpha $.
\end{corollary}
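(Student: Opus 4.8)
The plan is to unwind the definitions and invoke the two corollaries already proved. Recall that the orbit relation of $\alpha$ on $\mathcal{P}_n$ is the equivalence relation $R$ defined by $A\mathrel{R}B$ iff there is $P\in\mathcal{L}$ with $P(A)=B$; we must show $R$ coincides with $\eqpol^n$. Both directions are short.

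For the inclusion $R\subseteq{\eqpol^n}$: suppose $A,B\in\mathcal{P}_n$ and $P\in\mathcal{L}$ satisfies $P(A)=B$. Since $P$ is a bijection of $k$ (linear polynomials are invertible, with $P^{-1}\in\mathcal{L}$), we have $A=P^{-1}(B)$, so $A\pol B$. Symmetrically, $B=(P^{-1})^{-1}(A)=P(A)$ gives $B\pol A$ via the linear polynomial $P^{-1}$ — wait, more directly, $P^{-1}\in\mathcal{L}$ and $P^{-1}(A)=$ does not immediately hold; instead note $B\pol A$ because $Q:=P^{-1}$ satisfies $Q^{-1}(A)=P(A)=B$, i.e. $B=Q^{-1}(A)$ with $Q$ a polynomial. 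Hence $A\eqpol^n B$.

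For the reverse inclusion ${\eqpol^n}\subseteq R$: suppose $A\eqpol^n B$, so in particular $A\pol B$. By corollary \ref{reductioniffequivalence} (or directly proposition \ref{linearreduction}, using $n\ge 2$; the case $n=1$ is trivial since $\mathcal{P}_1$ is a single orbit by proposition \ref{incrcard}(2) and \eqref{eqnincrcard}-style reasoning, and $\mathcal{L}$ acts transitively on singletons), there is a linear polynomial $P$ with $A=P^{-1}(B)$. By remark \ref{04042022732}(2), since $P$ is non-constant, $P(A)=B$, i.e. $\alpha(P,A)=B$, so $A\mathrel{R}B$. This closes the loop.

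I do not expect any genuine obstacle here; the proposition is essentially a restatement of corollary \ref{reductioniffequivalence} together with the observation (remark \ref{04042022732}(2)) that a non-constant reduction $P$ of $A$ to $B$ satisfies $P(A)=B$, which converts the preimage description into the forward-image description defining the action $\alpha$. The only point requiring a modicum of care is making sure the degenerate case $n=1$ is handled, and that one uses the right direction of surjectivity — but both are immediate from results already in the excerpt.
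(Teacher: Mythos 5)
Your proof is correct and follows the same route the paper intends: the corollary is stated without an explicit proof precisely because it is the combination of corollary \ref{reductioniffequivalence}, proposition \ref{linearreduction}, and the fact that a linear polynomial is an invertible map (so $A=P^{-1}(B)$ is equivalent to $P(A)=B$), which is exactly what you spell out. Your extra care with the degenerate case $n=1$ is a harmless and welcome addition.
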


Corollary \ref{coractionalpha} do not extend to infinite sets: see remark \ref{060520221239}.

\begin{remark} \label{fewpols}
From proposition \ref{linearreduction} and the proof of proposition \ref{incrcard}(3) it follows that for every $A,B\in \mathcal P_2$ there are exactly two polynomials reducing $A$ to $B$: one has as coefficients the solution $(c,c')$ of \eqref{eqnincrcard}; the other one is obtained by switching $b_1,b_2$ in \eqref{eqnincrcard}.
A similar argument applied to the equation
\begin{equation} \label{040820221015}
\left ( \begin{matrix}
a_1 & 1 \\
a_2 & 1 \\
\ldots & \ldots \\
a_n & 1
\end{matrix} \right ) \left ( \begin{matrix}
c \\
c'
\end{matrix} \right )
= \left ( \begin{matrix}
b_1 \\
b_2 \\
\ldots \\
b_n
\end{matrix} \right ) \end{equation}
shows that if $\{ a_1,\ldots ,a_n\} \pol \{ b_1,\ldots ,b_n\} $, then there are at most $n!$ polynomials reducing $\{ a_1,\ldots ,a_n\} $ to $\{ b_1,\ldots ,b_n\} $.
Note indeed that \eqref{040820221015} may be compatible for some enumerations of $\{ b_1,\ldots ,b_n\} $ and incompatible for others.

In particular, the stabiliser $Stab_A^{\alpha }$ of every $A\in \mathcal P_n$ has at most $n!$ elements.
The cardinality of $Stab_A^{\alpha }$ plays a role in the geometry of polynomial classes: see remark \ref{sensstab}.
\end{remark}

\subsection{The variety $\Sigma_n$} \label{170920221559}
Corollary \ref{coractionalpha} suggests the possibility of endowing the orbit space of action $\alpha $, that is $\Sigma_n$, with a geometric structure.
In fact, in this section we show that $ \mathcal P_n$ and, more importantly, $\Sigma_n$ can be given the structure of a quotient affine variety.
The import of this fact is that it provides a quantitative framework to study the relation $ \pol $.

Let
\[
f_n(X_1,\ldots ,X_n)=\prod_{j\ne j'}(X_j-X_{j'})\in k[X_1,\ldots ,X_n].
\]
Then the standard open set $D(f_n)=k^n\setminus V(f_n)$ consists of all elements of $k^n$ having distinct coordinates.
In particular, the ring of regular functions on $D(f_n)$ is the localised ring $\Gamma (D(f_n))=k[X_1,\ldots ,X_n]_{f_n}= \left \{
\frac g{f_n^r} \mid g\in k[X_1,\ldots ,X_n],r\in \N
\right \} $.

\begin{remark}
The polynomial $f_n$ is not the least degree polynomial that identifies $D(f_n)$: one could use the Vandermonde determinant $\prod_{j<j'}(X_j-X_{j'})$.
The polynomial $f_n$ has the advantage of being symmetric, which is convenient in the next steps.
Note also that $f_n$ is homogeneous of degree $n(n-1)$ and translation invariant.
\end{remark}

Next we describe the structure of $ \mathcal P_n$ as an affine variety.
We identify $ \mathcal P_n$ as the orbit space of the action of $Sym_n$ on $D(f_n)$ by permutation of the coordinates.
Then $ \mathcal P_n$ is an affine variety by \cite[\S 10]{Harris1992}: it is the geometric quotient of $D(f_n)$ by this action.
Denote $\theta :D(f_n)\to \mathcal P_n$ the quotient projection.
The ring $\Gamma ( \mathcal P_n)$ of regular functions on $ \mathcal P_n$ consists of those elements of $\Gamma (D(f_n))$ that are invariant with respect to the action of $Sym_n$; this means that if $F\in\Gamma ( \mathcal P_n)$ then there exist a symmetric $g\in k[X_1,\ldots ,X_n]$ and $r\in \N $ such that for every $A=\{ a_1,\ldots ,a_n\}\in \mathcal P_n$ one has
\begin{equation} \label{justf}
F(A)= \frac{g(a_1,\ldots ,a_n)}{(f_n(a_1,\ldots ,a_n))^r} .
\end{equation}

By corollary \ref{coractionalpha}, $\Sigma_n$ is the orbit space of the action $\alpha $ on $ \mathcal P_n$.
Let $\theta': \mathcal P_n\to\Sigma_n$ be the quotient projection.
We claim that $\Sigma_n$ is the geometric quotient of $ \mathcal P_n$ by $\alpha $.
For this, we first identify which is the $k$-algebra $ \mathcal A $ consisting of the elements of $\Gamma ( \mathcal P_n)$ that are invariant under $\alpha $: if $\Sigma_n$ is a geometric quotient, this is to be the ring of regular functions $\Gamma (\Sigma_n)$.

\begin{lemma} \label{lemnnrgamma}
Let $F,g,r$ be as in \eqref{justf}.
Then $F\in \mathcal A $ if and only if $g$ is homogeneous of degree $n(n-1)r$ and translation invariant.
\end{lemma}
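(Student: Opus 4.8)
The plan is to unwind what $\alpha$-invariance of $F$ means at the level of the representing data $(g,r)$ and then translate it into the two stated conditions on $g$. By definition, $F\in\mathcal A$ means $F(P(A))=F(A)$ for every $A=\{a_1,\ldots ,a_n\}\in\mathcal P_n$ and every linear $P(X)=cX+c'\in\mathcal L$, where $P(A)=\{ca_1+c',\ldots ,ca_n+c'\}$ again has distinct coordinates and hence lies in $\mathcal P_n$, so that \eqref{justf} applies to it as well.

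First I would compute $F(P(A))$ using \eqref{justf}. The key point is that $f_n$ is translation invariant and homogeneous of degree $n(n-1)$, so
\[
f_n(ca_1+c',\ldots ,ca_n+c')=f_n(ca_1,\ldots ,ca_n)=c^{n(n-1)}f_n(a_1,\ldots ,a_n).
\]
Hence, after clearing the nonzero denominator $f_n(a_1,\ldots ,a_n)^r$, the equality $F(P(A))=F(A)$ is equivalent to
\[
g(ca_1+c',\ldots ,ca_n+c')=c^{n(n-1)r}\,g(a_1,\ldots ,a_n).
\]
So $F\in\mathcal A$ holds iff this identity holds for all $c\in k\setminus\{0\}$, $c'\in k$, and all $(a_1,\ldots ,a_n)\in D(f_n)$. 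Since $k^n$ is irreducible and $D(f_n)$ is a nonempty open, hence dense, subset, for each fixed $c,c'$ this identity on $D(f_n)$ is equivalent to the polynomial identity $g(cX_1+c',\ldots ,cX_n+c')=c^{n(n-1)r}g(X_1,\ldots ,X_n)$ in $k[X_1,\ldots ,X_n]$.

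It then remains to check that this family of polynomial identities, ranging over all $c\ne 0$ and all $c'$, is equivalent to ``$g$ homogeneous of degree $n(n-1)r$ and translation invariant''. For one direction, specialising $c=1$ gives translation invariance of $g$, and specialising $c'=0$ gives $g(cX_1,\ldots ,cX_n)=c^{n(n-1)r}g(X_1,\ldots ,X_n)$ for all $c\ne 0$; comparing the coefficient of each monomial and using that $k$ is infinite forces every monomial occurring in $g$ to have degree $n(n-1)r$, i.e.\ $g$ is homogeneous of that degree. Conversely, if $g$ is homogeneous of degree $d=n(n-1)r$ and translation invariant, then writing $cX_j+c'=c(X_j+c'/c)$ and applying homogeneity followed by translation invariance gives $g(cX_1+c',\ldots ,cX_n+c')=c^d\,g(X_1+c'/c,\ldots ,X_n+c'/c)=c^d\,g(X_1,\ldots ,X_n)$, as needed.

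There is no real obstacle here; the only points requiring a little care are the passage from an identity on the dense open set $D(f_n)$ to a genuine polynomial identity (using irreducibility of $k^n$) and the deduction of homogeneity from $g(cX)=c^d g(X)$ for all $c\ne 0$ (using that $k$ is infinite). The translation invariance of $f_n$ noted earlier is precisely what keeps the denominator under control, so the computation stays short.
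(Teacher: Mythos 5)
Your proposal is correct and follows essentially the same route as the paper: unwind $\alpha$-invariance into the identity $g(ca_1+c',\ldots,ca_n+c')=c^{n(n-1)r}g(a_1,\ldots,a_n)$ using the homogeneity and translation invariance of $f_n$, then specialise to pure translations ($c=1$) and pure scalings ($c'=0$) for one direction and recombine for the other. Your explicit appeals to the density of $D(f_n)$ in $k^n$ and to the infinitude of $k$ (to pass from the scaling identity to genuine homogeneity) are points the paper leaves implicit, but the argument is the same.
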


\begin{proof}
$(\Rightarrow )$
Assume $F\in \mathcal A $.

Let $\{ a_1,\ldots ,a_n\}\in \mathcal P_n,s\in k\setminus\{ 0\} $.
Since $\{ a_1,\ldots ,a_n\} ,\{ sa_1,\ldots ,sa_n\} $ belong to the same orbit, as witnessed by the linear polynomial $P(X)=sX$, from $F(\{ a_1,\ldots ,a_n\} )=F(\{ sa_1,\ldots ,sa_n\} )$ it follows that
\[
\frac{g(a_1,\ldots ,a_n)}{(f_n(a_1,\ldots ,a_n))^r} = \frac{g(sa_1,\ldots ,sa_n)}{(f_n(sa_1,\ldots ,sa_n))^r} = \frac{g(sa_1,\ldots ,sa_n)}{s^{n(n-1)r}(f_n(a_1,\ldots ,a_n))^r}
\]
whence $g(sa_1,\ldots ,sa_n)=s^{n(n-1)r}g(a_1,\ldots ,a_n)$, showing that $g$ is homogeneous of degree $n(n-1)r$.

Let now $\{ a_1,\ldots ,a_n\}\in \mathcal P_n,t\in k$.
Since $\{ a_1,\ldots ,a_n\} ,\{ a_1+t,\ldots ,a_n+t\} $ belong to the same orbit, as witnessed by the linear polynomial $P(X)=X+t$, from $F(\{ a_1,\ldots ,a_n\} )=F(\{ a_1+t,\ldots ,a_n+t\} )$ it follows that
\[
\frac{g(a_1,\ldots ,a_n)}{(f_n(a_1,\ldots ,a_n))^r} = \frac{g(a_1+t,\ldots ,a_n+t)}{(f_n(a_1+t,\ldots ,a_n+t))^r} = \frac{g(a_1+t,\ldots ,a_n+t)}{(f_n(a_1,\ldots ,a_n))^r}
\]
whence $g(a_1+t,\ldots ,a_n+t)=g(a_1,\ldots ,a_n)$, showing that $g$ is translation invariant.

$(\Leftarrow )$
Assume $g$ is homogeneous of degree $n(n-1)r$ and translation invariant.
Let $A,B\in \mathcal P_n$ be in the same orbit.
If $A=\{ a_1,\ldots ,a_n\} $, then $B=\{ sa_1+t,\ldots ,sa_n+t\} $ for some $s,t$, with $s\ne 0$.
Then
\begin{multline*}
F(B)= \frac{g(sa_1+t,\ldots ,sa_n+t)}{(f_n(sa_1+t,\ldots ,sa_n+t))^r} = \frac{g(sa_1,\ldots ,sa_n)}{(f_n(sa_1,\ldots ,sa_n))^r} = \\
= \frac{s^{n(n-1)r}g(a_1,\ldots ,a_n)}{s^{n(n-1)r}(f_n(a_1,\ldots ,a_n))^r} = \frac{g(a_1,\ldots ,a_n)}{(f_n(a_1,\ldots ,a_n))^r} =F(A).
\end{multline*}
Therefore $F\in \mathcal A $.
\end{proof}

It remains to show that the $k$-algebra $ \mathcal A $, which is a ring of functions $\Sigma_n\to k$, generates the quotient topology on $\Sigma_n$, that is the following.
\begin{proposition}
\begin{enumerate}
\item For every $F\in \mathcal A $, the zero set of $F$ is closed in $\Sigma_n$.
\item For every closed $C\subseteq\Sigma_n$ there exists $ \mathcal B \subseteq \mathcal A $ such that $C$ is the zero set of $ \mathcal B $.
\end{enumerate}
\end{proposition}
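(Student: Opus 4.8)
For part (1) there is nothing to do beyond unwinding definitions: every $F\in\mathcal A$ is, by construction, a regular function on the affine variety $\mathcal P_n$, so its zero set $Z(F)\subseteq\mathcal P_n$ is Zariski closed; since $F$ is $\alpha$-invariant, $Z(F)$ is a union of $\alpha$-orbits, hence $Z(F)=(\theta')^{-1}(\theta'(Z(F)))$. By the definition of the quotient topology on $\Sigma_n$, it follows that $\theta'(Z(F))$ — which is exactly the zero set of $F$ regarded as a function on $\Sigma_n$ — is closed.

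For part (2), the first move is to observe that a set $C\subseteq\Sigma_n$ is closed precisely when $W:=(\theta')^{-1}(C)$ is a closed, $\alpha$-saturated subset of $\mathcal P_n$, and that then $C=\theta'(W)$. Viewing each $F\in\mathcal A$ simultaneously as the $\alpha$-invariant function $A\mapsto F([A])$ on $\mathcal P_n$, the problem reduces to proving the equality $W=V_{\mathcal P_n}(\mathcal A\cap I(W))$; taking $\mathcal B=\mathcal A\cap I(W)$ then exhibits $C=\theta'(W)$ as the zero set of $\mathcal B$ in $\Sigma_n$. One inclusion is immediate, so the real content is: given $A_0\in\mathcal P_n\setminus W$, produce $F\in\mathcal A\cap I(W)$ with $F(A_0)\ne0$. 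Because $W$ is $\alpha$-saturated, the entire $\mathcal L$-orbit of $A_0$ misses $W$, so what is wanted is a separation of $W$ from $\mathcal L\cdot A_0$ by an $\mathcal L$-invariant function.

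The plan for this separation is to split $\mathcal L$ into its subgroup $\mathcal T$ of translations $X\mapsto X+t$ and its subgroup $\mathcal L_0$ of scalings $X\mapsto sX$ with $s\ne0$, so that $\mathcal A$ is exactly the set of functions invariant under both $\mathcal T$ and $\mathcal L_0$. The non-reductivity of $\mathcal L$ sits entirely in the unipotent part $\mathcal T$, which I would dispose of by an explicit retraction. Let $\mathcal P_n^0=\{A\in\mathcal P_n\mid\sum_{a\in A}a=0\}$, a closed subvariety of $\mathcal P_n$ (zero set of the regular function $A\mapsto\sum_{a\in A}a$), and let $\rho\colon\mathcal P_n\to\mathcal P_n^0$ translate each set to have barycentre $0$; in the coordinates $e_1,\dots,e_n$ on $\mathcal P_n$, $\rho$ shifts the roots by $-e_1/n$, which is polynomial since $\operatorname{char}k=0$, so $\rho$ is a morphism, constant on $\mathcal T$-orbits and commuting with $\mathcal L_0$. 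Using that $f_n$ is translation invariant one checks that $\rho^*$ identifies $\Gamma(\mathcal P_n)^{\mathcal T}$ with $\Gamma(\mathcal P_n^0)$, hence $\mathcal A=\Gamma(\mathcal P_n)^{\mathcal L}$ with $\Gamma(\mathcal P_n^0)^{\mathcal L_0}$, and that $\rho(W)=W\cap\mathcal P_n^0$, a closed $\mathcal L_0$-invariant subset of $\mathcal P_n^0$ disjoint from the $\mathcal L_0$-orbit of $A_0':=\rho(A_0)$.

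Finally, $\mathcal L_0\cong k^{\times}$ is linearly reductive: $\Gamma(\mathcal P_n^0)$ is $\mathbb Z$-graded by $\mathcal L_0$-weight ($e_j$ having weight $j$), and projection onto the weight-$0$ component is a Reynolds operator. The key extra fact is that the $\mathcal L_0$-orbit of $A_0'$ is closed in $\mathcal P_n^0$: as $s\to0$ the only possible limit is the point $e_1=\dots=e_n=0$, i.e.\ the polynomial $X^n$, which has vanishing discriminant and so lies outside $\mathcal P_n^0$, while no limit exists as $s\to\infty$. Hence $I(W\cap\mathcal P_n^0)$ and $I(\mathcal L_0\cdot A_0')$ are graded ideals of $\Gamma(\mathcal P_n^0)$ whose sum is the whole ring; writing $1=a+b$ with $a$ vanishing on $W\cap\mathcal P_n^0$ and $b$ on $\mathcal L_0\cdot A_0'$ and passing to weight-$0$ parts yields an $\mathcal L_0$-invariant $\phi$ with $\phi|_{W\cap\mathcal P_n^0}=0$ and $\phi(A_0')=1$; then $F:=\rho^*\phi\in\mathcal A$ vanishes on $W$ and $F(A_0)=\phi(A_0')\ne0$, as required. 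The main obstacle is precisely that $\mathcal L$ is not reductive, so disjoint closed invariant sets cannot be separated by averaging directly; the resolution is the retraction $\rho$, which peels off the unipotent $\mathcal T$ and reduces everything to the torus $\mathcal L_0$, for which a Reynolds operator exists and — crucially — the relevant orbits turn out to be closed. The remaining verifications ($\rho$ a morphism, the identification of invariant rings, closedness of orbits) are routine, and the cases $n\le2$ are trivial since $\Sigma_1$ and $\Sigma_2$ are single points by proposition \ref{incrcard}.
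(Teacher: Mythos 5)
Your proof is correct, but it takes a genuinely different route from the paper's. Both arguments exploit the same decomposition of $\mathcal L$ into translations and scalings: your retraction $\rho$ to the barycentre-zero locus is exactly the paper's substitution $h\mapsto h\bigl(X_1-\frac 1n\sum_l X_l,\ldots ,X_n-\frac 1n\sum_l X_l\bigr)$, and your weight decomposition under $\mathcal L_0\cong k^{\times}$ is its homogenization step. The difference is where the work is done. The paper stays upstairs in $D(f_n)$ and transforms a generating set of $I(C'')$ directly into symmetric, homogeneous, translation-invariant equations with the same zero set --- the symmetrization being supplied by Sturmfels' result that a symmetric ideal is, up to radical, generated by its symmetric elements --- so it never needs to separate points and never looks at individual orbits. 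You instead work on the quotient $\mathcal P_n$ (absorbing the $Sym_n$ step into the Nullstellensatz there) and prove the stronger separation statement that $\mathcal A$ distinguishes a point from any disjoint closed saturated set. That buys a cleaner conceptual picture of $\Sigma_n$ as a genuine invariant-theoretic quotient with $\mathcal A$ as its coordinate ring, but it costs an extra geometric input the paper never needs: that the $\mathcal L_0$-orbits in $\mathcal P_n^0$ are closed, which you correctly secure by noting that the only possible degeneration is $X^n$, whose discriminant vanishes, so the boundary point escapes $D(f_n)$. Your deferred verifications (that $\rho$ is a morphism and intertwines the actions, that the ideals of $\mathcal L_0$-stable closed sets are graded, and that the Nullstellensatz applies in the finitely generated algebra $\Gamma (\mathcal P_n^0)$) are indeed routine, and both proofs use characteristic $0$ only through the division by $n$ in the barycentre shift.
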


\begin{proof}
(1) Let $g,r$ as in \eqref{justf}.
Then the zero set of $F$ in $\Sigma_n$ is the image under $\theta'$ of the zero set determined by $g$ in $ \mathcal P_n$; the latter is a closed subset of $ \mathcal P_n$ invariant under $\alpha $, so its projection under $\theta'$ is closed in $\Sigma_n$.

(2) If $C$ is closed in $\Sigma_n$, there exists a closed set $C'\subseteq \mathcal P_n$ invariant under action $\alpha $ such that $C=\theta' (C')$.
In turn there exists a closed set $C''\subseteq D(f_n)$ invariant under permutation of the coordinates and such that $C'=\theta (C'')$.
Note that $C''$ is also invariant under the tranformations $(a_1,\ldots ,a_n)\mapsto (sa_1,\ldots ,sa_n)$, for $s\ne 0$, and $(a_1,\ldots ,a_n)\mapsto (a_1+t,\ldots ,a_n+t)$.

\begin{claim}
There exists $ \mathcal S \subseteq k[X_1,\ldots ,X_n]$ such that $C''=V( \mathcal S )$ and every element of $ \mathcal S $ is homogeneous, invariant under translation, and symmetric.
\end{claim}

\begin{proof}[Proof of the claim]
Since $C''$ is symmetric, there exists a set $ \mathcal T_0\subseteq k[X_1,\ldots , X_n]$ of symmetric polynomials such that $C''$ is the zero set of $ \mathcal T_0$ in $D(f_n)$.
To see this, notice that $I(C'')$ is a symmetric ideal, that is given any $f\in I(C'')$ any polynomial obtained from $f$ by a permutation of the variables is in $I(C'')$; by \cite[proposition 2.6.4]{sturmf2008} (which is stated for $ \C $, though the proof holds for any algebraically closed field), the radical of the ideal generated by the symmetric polynomials of $I(C'')$ equals $I(C'')$.
Therefore it is enough to take as $ \mathcal T_0$ the set of all symmetric polynomials in $I(C'')$.

The next step is to replace every $h\in \mathcal T_0$ with a set of symmetric and homogeneous polynomials.
The argument is similar to the proof that a projective variety admits homogeneous equations (see for instance \cite[proposition-definition 4.1]{perrin2008}).
Let $h=\sum_{j=1}^uh_j$, where the polynomials $h_j$ are homogeneous and have pairwise different degree ${\gamma }_j$; notice that each $h_j$ is symmetric too.
If $h_1(a_1,\ldots ,a_n)=\ldots =h_u(a_1,\ldots ,a_n)=0$, then $h(a_1,\ldots ,a_n)=0$.
Conversely, if $h(a_1,\ldots ,a_n)=0$ for all $h\in \mathcal T_0$, then the properties of $C''$ imply that, for every $s\ne 0$,
\[
0=h(sa_1,\ldots ,sa_n)=\sum_{j=1}^uh_j(a_1,\ldots ,a_n)s^{\gamma_j}
\]
so that $h_1(a_1,\ldots ,a_n)=\ldots =h_u(a_1,\ldots ,a_n)=0$.
Therefore $V( \mathcal T_0)=V( \mathcal T_1)$,  where $ \mathcal T_1$ consists of the homogenous parts of every degree of all the elements of $ \mathcal T_0$.

The last step is to replace every element of $ \mathcal T_1$ with an equivalent symmetric, homogeneous, and translation invariant polynomial.
For every $h\in \mathcal T_1$, let $p_h(X_1,\ldots ,X_n)=h(X_1- \frac 1n \sum_{j=1}^nX_j,\ldots ,X_n- \frac 1n \sum_{j=1}^nX_j)$.
A direct inspection using the properties of $h$ shows that $p_h$ is indeed symmetric, homogeneous, and translation invariant.
Moreover, by the properties of $C''$,
\begin{multline*}
\forall h\in \mathcal T_1,p_h(a_1,\ldots ,a_n)=0\qquad \text{iff} \\
\forall h\in \mathcal T_1,h \left ( a_1- \frac 1n \sum_{j=1}^na_j,\ldots ,a_n- \frac 1n \sum_{j=1}^na_j \right ) =0\qquad \text{iff} \\
\forall h\in \mathcal T_1,h(a_1,\ldots ,a_n)=0
\end{multline*}
so letting $ \mathcal S =\{ p_h\}_{h\in \mathcal T_1}$ establishes the claim.
\end{proof}

Now, for every $h\in \mathcal S $, let $\gamma_h$ be the degree of $h$.
It follows that the polynomial $(h(X_1,\ldots ,X_n))^{n(n-1)}$ is homogeneous of degree $n(n-1)\gamma_h$, translation invariant, and symmetric.
Setting $ \mathcal B =\left \{
\frac {h^{n(n-1)}}{f_n^{\gamma_h}}
\right \}_{h\in \mathcal S }$
concludes the proof.
\end{proof}

A first immediate property of the variety $\Sigma_n$ is the following.

\begin{corollary}
The affine variety $\Sigma_n$ is irreducible.
\end{corollary}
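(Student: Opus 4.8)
The plan is to deduce irreducibility of $\Sigma_n$ from the irreducibility of $D(f_n)$, which is itself immediate since $D(f_n)$ is a nonempty standard open subset of the irreducible affine space $k^n$. The key observation is that $\Sigma_n$ is the image of $D(f_n)$ under the composition of the two quotient projections $\theta'\circ\theta\colon D(f_n)\to\mathcal P_n\to\Sigma_n$, which is a morphism of affine varieties by construction of these geometric quotients. So $\Sigma_n$ is the image of an irreducible variety under a morphism, and the continuous image of an irreducible space is irreducible.

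First I would recall that $k^n$ is irreducible because $k[X_1,\dots,X_n]$ is an integral domain, and that a nonempty open subset of an irreducible space is irreducible (and dense); since $n\ge 2$ the polynomial $f_n$ is not identically zero, so $D(f_n)=k^n\setminus V(f_n)$ is a nonempty open subset of $k^n$, hence irreducible. Next I would invoke that the quotient projections $\theta\colon D(f_n)\to\mathcal P_n$ and $\theta'\colon\mathcal P_n\to\Sigma_n$ are morphisms of affine varieties: this is exactly what was established in the construction of $\mathcal P_n$ as the geometric quotient of $D(f_n)$ by $Sym_n$ and of $\Sigma_n$ as the geometric quotient of $\mathcal P_n$ by $\alpha$ in the preceding material. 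In particular these maps are continuous for the Zariski topologies, and $\theta'\circ\theta$ is surjective onto $\Sigma_n$.

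Finally I would conclude with the elementary topological fact: if $Y$ is irreducible and $f\colon Y\to Z$ is continuous and surjective, then $Z$ is irreducible. Indeed, if $Z=C_1\cup C_2$ with $C_1,C_2$ proper closed subsets, then $Y=f^{-1}(C_1)\cup f^{-1}(C_2)$ expresses $Y$ as a union of two closed sets, and by surjectivity neither $f^{-1}(C_i)$ equals $Y$ (since $f(f^{-1}(C_i))\subseteq C_i\subsetneq Z$), contradicting irreducibility of $Y$. Applying this with $Y=D(f_n)$, $Z=\Sigma_n$, $f=\theta'\circ\theta$ gives the result. I do not anticipate a genuine obstacle here; the only point requiring a little care is making sure that $\theta$ and $\theta'$ are honest morphisms (not merely set maps) and that their composition is surjective — but both facts are furnished by the geometric quotient constructions carried out just above, so the argument is short.
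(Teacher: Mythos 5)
Your proof is correct and follows exactly the paper's argument: $\Sigma_n$ is the image of the irreducible variety $D(f_n)$ under the morphism $\theta'\theta$, and images of irreducible spaces under continuous surjections are irreducible (the paper cites Perrin for this last step, which you instead prove directly).
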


\begin{proof}
Indeed, $\Sigma_n$ is the image under the morphism $\theta'\theta $ of the irreducible variety $D(f_n)$.
Then apply \cite[proposition I.6.11(2)]{perrin2008}.
\end{proof}

Moreover $\dim (\Sigma_n)=n-2$, see corollary \ref{02102022}(2).

\begin{question} \label{conjnnmt}
Identify $\Sigma_n$.
\end{question}

We answer question \ref{conjnnmt} for $n=3$ in proposition \ref{linethree}.

\subsection{A geometric description of polynomial bireducibility} \label{170920221620}
In this section we describe geometrically the relation of polynomial bireducibility in $ \mathcal P_n$, for $n\ge 3$, that is the orbit relation of the action $\alpha $; in other words, we give a geometric description of the elements of $\Sigma_n$.

Fix $B=\{ b_1,\ldots ,b_n\}\in \mathcal P_n$.
Given $(x_1,\ldots ,x_n)\in D(f_n)$, the set $A=\{ x_1,\ldots ,x_n\} $ is in $[B]$ if and only if there exist a linear polynomial $P(X)=cX+c'$ and a permutation $\sigma $ of $\{ 1,\ldots ,n\} $ such that $P(x_j)=b_{\sigma (j)}$, for $j\in\{ 1,\dots ,n\} $.
Therefore, the condition is that there exists $\sigma\in Sym_n$ such that $(x_1,\ldots ,x_n)$ makes the linear system whose augmented matrix is $N= \begin {pmatrix} x_1 &  1 & b_{\sigma (1)} \\
x_2 &  1 & b_{\sigma (2)} \\
\ldots & \ldots & \ldots \\
x_n &  1 & b_{\sigma (n)} \end{pmatrix} $ compatible.
As $rk \begin{pmatrix}
x_1 & 1 \\
x_2 & 1 \\
\ldots & \ldots \\
x_n & 1
\end{pmatrix} =2$, the condition becomes
\begin{equation} \label{eqnrank}
rkN=2,
\end{equation}
which means that the points $(1,\ldots ,1),(x_1,\ldots ,x_n),(b_{\sigma (1)},\ldots ,b_{\sigma (n)})$ belong to the same plane through the origin.

More precisely, since any two rows of $N$ are linearly independent, \eqref{eqnrank} means that every row is linearly dependent from the first two rows, which translates into the system of linear equations:
\begin{equation} \label{planesn}
\left \{ \begin{array}{lcl} (b_{\sigma (3)}-b_{\sigma (2)})x_1+(b_{\sigma (1)}-b_{\sigma (3)})x_2+ (b_{\sigma (2)}-b_{\sigma (1)})x_3 & = & 0\\
\ldots & & \\
                      (b_{\sigma (j)}-b_{\sigma (2)})x_1+(b_{\sigma (1)}-b_{\sigma (j)})x_2+ (b_{\sigma (2)}-b_{\sigma (1)})x_j & = & 0\\
\ldots & & \\
                        (b_{\sigma (n)}-b_{\sigma (2)})x_1+(b_{\sigma (1)}-b_{\sigma (n)})x_2+ (b_{\sigma (2)}-b_{\sigma (1)})x_n & = & 0 \end{array} \right .
.
\end{equation}
As the coefficient matrix of \eqref{planesn} has rank $n-2$, the set $\pi_{\sigma }$ of the solutions to this system in $k^n$ is a $2$-dimensional vector space, that is a plane, containing the line
\begin{equation} \label{liner}
r:\quad x_1=x_2=\ldots =x_n.
\end{equation}
Notice that $\pi_{\sigma }$ also depends on the enumeration of $B$, while $ \mathcal F_B=\{\pi_{\sigma }\}_{\sigma\in Sym_n}$ does not ---see definition \ref{defexcptl}.

Using the above discussion and notation, the following is obtained.

\begin{lemma}
Fix $\sigma\in Sym_n$ and let $B=\{ b_1,\ldots ,b_n\}\in \mathcal P_n$.
Let $\pi_{\sigma }$ and $r$ be defined as in \eqref{planesn} and \eqref{liner}, respectively.
If $A\in \mathcal P_n$ and $A \pol B$, then there exists $(a_1,\ldots ,a_n)\in\pi_{\sigma }\setminus r$ such that $A=\{ a_1,\ldots ,a_n\} $.
Conversely, if $(a_1,\ldots ,a_n)\in\pi_{\sigma }\setminus r$ then $\{ a_1,\ldots ,a_n\} \pol B$.
\end{lemma}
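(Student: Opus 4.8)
The plan is to distil the linear-algebra discussion that precedes the lemma into a single equivalence and then read off both implications from it. The equivalence I want is this: for a point $v=(a_1,\dots,a_n)\in k^n$ that does \emph{not} lie on $r$ (equivalently, whose coordinates are not all equal), $v\in\pi_\sigma$ if and only if there is a linear polynomial $P(X)=cX+c'$ with $P(a_j)=b_{\sigma(j)}$ for every $j$; moreover, whenever this holds, the $a_j$ are automatically pairwise distinct, so $\{a_1,\dots,a_n\}\in\mathcal P_n$.

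To prove this equivalence I would argue both directions by hand using \eqref{planesn}. If such a $P$ exists, substitute $a_i=(b_{\sigma(i)}-c')/c$ into the $j$-th equation of \eqref{planesn} (for $j\in\{3,\dots,n\}$) and clear the denominator; what remains is the polynomial identity $(s-q)(p-e)+(p-s)(q-e)+(q-p)(s-e)=0$ with $p=b_{\sigma(1)}$, $q=b_{\sigma(2)}$, $s=b_{\sigma(j)}$, $e=c'$, so $v\in\pi_\sigma$; and the $a_j$ are distinct because $c\neq0$ and the $b_i$ are distinct. Conversely, if $v\in\pi_\sigma$, the $j$-th equation of \eqref{planesn}, together with $b_{\sigma(1)}\neq b_{\sigma(2)}$, lets one solve $a_j=\lambda b_{\sigma(j)}+\mu$ for all $j$, where $\lambda=(a_2-a_1)/(b_{\sigma(2)}-b_{\sigma(1)})$ and $\mu$ is determined similarly; since $v\notin r$ forces $a_1\neq a_2$, hence $\lambda\neq0$, the linear polynomial $P(X)=\lambda^{-1}(X-\mu)$ does the job. (This is, of course, just the rank condition $rk N=2$ recorded before the statement, with the case $v\in r$ carefully excluded so that the affine map is genuinely invertible.)

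Granting the equivalence, both implications are short. For the converse implication of the lemma, let $(a_1,\dots,a_n)\in\pi_\sigma\setminus r$ and take the linear $P$ with $P(a_j)=b_{\sigma(j)}$; then $A:=\{a_1,\dots,a_n\}$ has exactly $n$ elements, $A\subseteq P^{-1}(B)$, and $P^{-1}(B)$ also has exactly $n$ elements because $P$ is a bijection of $k$; hence $A=P^{-1}(B)$, that is $A\pol B$. For the forward implication, assume $A\in\mathcal P_n$ and $A\pol B$. By proposition \ref{linearreduction} there is a linear $P$ with $A=P^{-1}(B)$, and $P(A)=B$ by remark \ref{04042022732}(2), so $P$ maps $A$ bijectively onto $B$; I then enumerate $A=\{a_1,\dots,a_n\}$ so that $P(a_j)=b_{\sigma(j)}$ for the \emph{given} $\sigma$ (if $P$ realises some permutation $\tau$ for an initial enumeration $a'_1,\dots,a'_n$, re-index by $a_j:=a'_{\tau^{-1}\sigma(j)}$). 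By the equivalence, $(a_1,\dots,a_n)\in\pi_\sigma$, and it lies off $r$ since the $a_j$ are distinct and $n\geq3$; this is the tuple required.

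The only genuinely delicate points are the re-indexing step in the forward direction — needed precisely because the lemma fixes the permutation $\sigma$ in advance — and the care with the hypothesis $v\notin r$ in the converse, which is exactly what makes $\lambda\neq0$, i.e.\ what guarantees that the affine map sending $a_j$ to $b_{\sigma(j)}$ is a genuine (invertible) linear reduction rather than a collapsing map. Neither is hard, so I expect the proof to be essentially bookkeeping on top of the discussion already carried out before the statement.
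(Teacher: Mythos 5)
Your proposal is correct and follows essentially the same route as the paper: the forward direction via proposition \ref{linearreduction} together with the re-indexing $a_j:=a'_{\tau^{-1}\sigma(j)}$, and the converse by unwinding the rank condition behind \eqref{planesn} to recover an invertible linear reduction, with the distinctness of the coordinates of points of $\pi_\sigma\setminus r$ coming from $a_j=\lambda b_{\sigma(j)}+\mu$ with $\lambda\neq0$. You merely make explicit the ``previous discussion'' that the paper's two-line proof invokes implicitly.
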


\begin{proof}
Assume that the polynomial $P$ reduces $A=\{ x_1,\ldots ,x_n\} $ to $B$, so that $P$ is linear by proposition \ref{linearreduction}.
Then there exists $\tau\in Sym_n$ such that $\forall j\in\{ 1,\ldots ,n\} ,P(x_j)=b_{\tau (j)}$, whence $\forall j\in\{ 1,\ldots ,n\} ,P(x_{\tau^{-1}\sigma (j)})=b_{\sigma (j)}$.
Therefore $(x_{\tau^{-1}\sigma (1)},\ldots ,x_{\tau^{-1}\sigma (n)})\in\pi_{\sigma }\setminus r$.

The converse holds by the previous discussion and the fact that the elements in $\pi_{\sigma }\setminus r$ have coordinates that are all distinct.
\end{proof}

Therefore we have the following.

\begin{remark} \label{26102022}
\begin{enumerate}
\item The set of all $(x_1,\ldots ,x_n)$ such that $\{ x_1,\ldots ,x_n\}\in [B]$ is $\bigcup_{\sigma\in Sym_n}\pi_{\sigma }\setminus r=(\theta'\theta )^{-1}([B])$: this is the fiber of $[B]\in\Sigma_n$ under the morphism $\theta'\theta :D(f_n)\to\Sigma_n$.
The closure of this fiber in $k^n$ is $\bigcup_{\sigma\in Sym_n}\pi_{\sigma }$.
\item For any fixed $\sigma\in Sym_n$,
\[
[B]=\{\{ x_1,\ldots ,x_n\}\mid (x_1,\ldots ,x_n)\in\pi_{\sigma }\setminus r\} .
\]
\item Two distinct planes as in \eqref{planesn}, for the same or distinct $B$, intersect in $r$.
\item Letting $B$ range in $ \mathcal P_n$, the planes in \eqref{planesn} cover $k^n$ except for those points having two equal coordinates, that is the points on the $ \frac{n(n-1)}2 $ hyperplanes $x_j=x_{j'}$ for $j\ne j'$.
\end{enumerate}
\end{remark}

\begin{corollary} \label{02102022}
\begin{enumerate}
\item Every polynomial class $[B]$ is a $2$-dimensional subvariety of $ \mathcal P_n$.
\item $\dim (\Sigma_n)=n-2$.
\end{enumerate}
\end{corollary}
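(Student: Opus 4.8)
My plan is to read everything off the geometric picture established in the preceding lemma and in Remark \ref{26102022}, together with the fact that $\theta:D(f_n)\to\mathcal P_n$ is a geometric quotient by the finite group $Sym_n$. For part (1), fix $B=\{b_1,\dots,b_n\}\in\mathcal P_n$ and regard $[B]$ as a subset of $\mathcal P_n$. I would first observe that $\theta^{-1}([B])=(\theta'\theta)^{-1}([B])=\bigcup_{\sigma\in Sym_n}(\pi_\sigma\setminus r)$ by Remark \ref{26102022}(1). Since the line $r$ lies in $V(f_n)$ while, as noted in the proof of the lemma, every point of $\pi_\sigma\setminus r$ has pairwise distinct coordinates, one has $\pi_\sigma\setminus r=\pi_\sigma\cap D(f_n)$; hence $\theta^{-1}([B])=(\bigcup_{\sigma}\pi_\sigma)\cap D(f_n)$ is a closed, $Sym_n$-invariant subset of $D(f_n)$. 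As $\theta$ is the geometric quotient by the finite group $Sym_n$ --- in particular a finite, hence closed, morphism --- its image $[B]=\theta(\theta^{-1}([B]))$ is a closed subvariety of $\mathcal P_n$.

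To get the dimension in (1): each $\pi_\sigma$ is a $2$-dimensional linear subspace of $k^n$, and $\pi_\sigma\cap D(f_n)=\pi_\sigma\setminus r$ is a non-empty open subset of it, hence of dimension $2$; so $\theta^{-1}([B])$, a finite union of such sets, has dimension $2$. The restriction of the finite morphism $\theta$ to the closed set $\theta^{-1}([B])$ is a finite surjection onto $[B]$, and finite morphisms preserve dimension, so $\dim[B]=2$. This settles (1).

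For part (2), I would first note that, $\theta$ being finite and surjective, $\dim\mathcal P_n=\dim D(f_n)=n$. Then $\theta':\mathcal P_n\to\Sigma_n$ is a surjective morphism of irreducible varieties (using the irreducibility of $\Sigma_n$ proved just above), and by part (1) every fibre of $\theta'$ is a polynomial class, hence of dimension $2$. By the theorem on the dimension of the fibres of a dominant morphism (see \cite{perrin2008}) this gives $n=\dim\mathcal P_n=\dim\Sigma_n+2$, i.e. $\dim\Sigma_n=n-2$. Equivalently, one may apply the same theorem directly to the surjection $\theta'\theta:D(f_n)\to\Sigma_n$, all of whose fibres $\bigcup_\sigma\pi_\sigma\setminus r$ have dimension $2$.

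The points needing care, rather than the main difficulty, are: (a) the closedness of $\theta^{-1}([B])$ in $D(f_n)$ --- without which $[B]$ would be merely constructible in $\mathcal P_n$ rather than a subvariety --- which rests precisely on $r\subseteq V(f_n)$ and $\pi_\sigma\setminus r\subseteq D(f_n)$; and (b) verifying the hypotheses of the fibre-dimension theorem (irreducibility of source and target) and observing that \emph{all} fibres, not only the generic one, have dimension exactly $2$, so that the equality $\dim\Sigma_n=n-2$ is exact. Everything else is routine bookkeeping with finite group quotients.
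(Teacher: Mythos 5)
Your argument is correct and follows essentially the same route as the paper, which likewise observes that the fibers of $\theta'\theta$ are the $2$-dimensional sets $\bigcup_{\sigma}\pi_{\sigma}\setminus r$ (remark \ref{26102022}(1)), that $\theta$ has finite fibers, and then invokes the fiber-dimension theorem from \cite{perrin2008}. You merely spell out details the paper leaves implicit, such as the closedness of $\theta^{-1}([B])$ in $D(f_n)$ and the irreducibility hypotheses.
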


\begin{proof}
Every fiber of the morphism $\theta $ is finite and every fiber of the morphism $\theta'\theta $ has dimension $2$ by remark \ref{26102022}(1).
Apply \cite[corollary IV.3.8(2)]{perrin2008}.
\end{proof}

So every polynomial class is identified by any plane $\pi_{\sigma }$ in the set \eqref{planesn} of at most $n!$ planes, where $\sigma $ ranges over $Sym_n$.
Such planes belong to the sheaf of planes based on line $r$.

\begin{definition} \label{defexcptl}
Let $B=\{ b_1,\ldots ,b_n\}\in \mathcal P_n$.
\begin{itemize}
\item We call each of the planes in \eqref{planesn} a \emph{characteristic plane} of $B$, or a \emph{characteristic plane} of $[B]$.
Denote $ \chpl_B$ the set of characteristic planes of $B$.
\item We call \emph{characteristic number} of $B$, or \emph{characteristic number} of $[B]$, and denote it $\chi (B)$ or $\chi ([B])$, the number of characteristic planes of $B$, that is $ \card ( \chpl_B)$.
\end{itemize}
\end{definition}

Therefore $\chi (B)\le n!$ ---see also definition \ref{defexcsetclass}.

\begin{definition} \label{action}
Given $B=\{ b_1,\ldots ,b_n\} $, let $\beta :Sym_n\times \chpl_B\to \chpl_B$ be the action defined by letting $\beta (\tau ,\pi_{\sigma })=\pi_{\tau\sigma }$.
\end{definition}

\begin{lemma}
Action $\beta $ depends only on the polynomial class $[B]$, and not on the representative set $B$ nor on its enumeration.
\end{lemma}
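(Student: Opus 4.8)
The statement asserts that the bijection $\beta(\tau,-)$ of $\chpl_B$ does not depend on the enumeration $b_1,\dots,b_n$ chosen for $B$, nor on the choice of $B$ within its class; recall that the underlying set $\chpl_B$ is already known to depend only on $[B]$, being the set of irreducible components of the closure $\bigcup_{\sigma\in Sym_n}\pi_\sigma=\overline{(\theta'\theta)^{-1}([B])}$ (see the comment after \eqref{liner} and Remark \ref{26102022}(1)). The plan is to recognise $\beta(\tau,-)$ as the restriction to $\chpl_B$ of a linear automorphism of $k^n$ that is defined without any reference to $B$ at all, namely the permutation of coordinates by $\tau$.

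For $\tau\in Sym_n$, let $\phi_\tau\colon k^n\to k^n$ be the linear automorphism $\phi_\tau(u_1,\dots,u_n)=(u_{\tau(1)},\dots,u_{\tau(n)})$. It fixes the all-ones vector $\mathbf 1=(1,\dots,1)$, hence fixes the diagonal line $r$ (which is spanned by $\mathbf 1$), and therefore maps the plane $\langle\mathbf 1,v\rangle$ through $r$ and a vector $v\notin r$ onto $\langle\mathbf 1,\phi_\tau(v)\rangle$. Now fix an enumeration $b_1,\dots,b_n$ of $B$. By the discussion around \eqref{planesn}, $\pi_\sigma=\langle\mathbf 1,v_\sigma\rangle$ with $v_\sigma=(b_{\sigma(1)},\dots,b_{\sigma(n)})$; hence $\phi_\tau(\pi_\sigma)=\langle\mathbf 1,\phi_\tau(v_\sigma)\rangle$ with $\phi_\tau(v_\sigma)=(b_{\sigma(\tau(1))},\dots,b_{\sigma(\tau(n))})$. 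With the composition convention of Definition \ref{action} (see the last paragraph) this last tuple is precisely $v_{\tau\sigma}$, so that $\phi_\tau(\pi_\sigma)=\langle\mathbf 1,v_{\tau\sigma}\rangle=\pi_{\tau\sigma}=\beta(\tau,\pi_\sigma)$.

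Since every element of $\chpl_B$ is some $\pi_\sigma$, and $\sigma\mapsto\tau\sigma$ is a bijection of $Sym_n$, the last equality shows at once that $\phi_\tau(\chpl_B)=\chpl_B$ and that $\beta(\tau,-)=\phi_\tau|_{\chpl_B}$. But $\phi_\tau$ is a map on $k^n$ defined with no mention of $B$, and $\chpl_B$ depends only on $[B]$; hence $\beta(\tau,-)=\phi_\tau|_{\chpl_B}$ depends only on $\tau$ and on $[B]$ --- not on the representative $B$, and not on the enumeration used to index the characteristic planes. As $\tau$ was arbitrary, this proves the lemma. (The action identities for $\beta$ then come for free, since $\tau\mapsto\phi_\tau$ is a group homomorphism.)

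I do not expect a genuine obstacle: the whole content is the reformulation of the second paragraph, that $\beta(\tau,-)$ is simply coordinate permutation by $\tau$ restricted to the class-invariant family of characteristic planes. The one point to handle with care is matching the composition convention in Definition \ref{action}: one must read $\tau\sigma$ in the order for which the tuple $(b_{\sigma(\tau(i))})_{i}$ is indexed by $\tau\sigma$ (and not by some $\sigma$-dependent conjugate of $\tau$), so that $v_{\tau\sigma}=\phi_\tau(v_\sigma)$. This is the only convention under which the formula $\beta(\tau,\pi_\sigma)=\pi_{\tau\sigma}$ defines a map on $\chpl_B$ in the first place; so, granting that Definition \ref{action} does define an action, it is automatically the convention in force.
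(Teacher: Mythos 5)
Your proof is correct, and it takes a genuinely different route from the paper's. The paper verifies invariance in two separate steps: for a change of enumeration $b_i\mapsto b_{\rho(i)}$ it computes $\pi'_{\sigma}=\pi_{\sigma\rho}$ and checks by a formal calculation that the two actions agree, and for a change of representative $C\in[B]$ it argues geometrically that the planes built from $C$ coincide, with matching indices, with those built from $B$. You instead identify $\beta(\tau,-)$ once and for all with the coordinate permutation $\phi_{\tau}$ restricted to $\chpl_B$, a map on $k^n$ defined with no reference to $B$, and invoke the fiber description of remark \ref{26102022}(1) to see that the underlying set $\chpl_B$ is an invariant of the class (the distinct $\pi_{\sigma}$ are exactly the irreducible components of the closure of the fiber, being pairwise incomparable planes). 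This is shorter and more structural; as a bonus it establishes that $\beta$ is well defined in the first place (that $\pi_{\sigma}=\pi_{\sigma'}$ implies $\pi_{\tau\sigma}=\pi_{\tau\sigma'}$, which definition \ref{action} tacitly assumes, since $\sigma\mapsto\pi_{\sigma}$ need not be injective), and it isolates exactly which composition convention makes the formula $\beta(\tau,\pi_{\sigma})=\pi_{\tau\sigma}$ meaningful: your reading $(\tau\sigma)(j)=\sigma(\tau(j))$ is indeed the one the paper uses, as its own identity $\pi'_{\sigma}=\pi_{\sigma\rho}$ with $b'_i=b_{\rho(i)}$ shows. What the paper's approach buys in exchange is that it stays entirely within the explicit equations \eqref{planesn} and does not need the (easily verified, and implicitly present in the discussion preceding \eqref{eqnrank}) observation that $\pi_{\sigma}$ is spanned by the diagonal together with $v_{\sigma}=(b_{\sigma(1)},\dots,b_{\sigma(n)})$.
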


\begin{proof}
First we show that $\beta $ does not depend on any enumeration of $B=\{ b_1,\ldots ,b_n\} $.
So let $B=\{ b_{\rho (1)},\ldots ,b_{\rho (n)}\} $ be another enumeration, with $\rho\in Sym_n$.
For every $\sigma\in Sym_n$, let $\pi'_{\sigma }$ be the plane defined as in \eqref{planesn} using this new enumeration, that is
\[
\pi'_{\sigma }:(b_{\sigma\rho (j)}-b_{\sigma\rho (2)})x_1+(b_{\sigma\rho (1)}-b_{\sigma\rho (j)})x_2+(b_{\sigma\rho (2)}-b_{\sigma\rho (1)})x_j=0,\quad j\in\{ 3,\ldots ,n\} .
\]
Then $\pi'_{\sigma }=\pi_{\sigma\rho }$.
Therefore, if $\beta':(\tau ,\pi'_{\sigma })\mapsto\pi'_{\tau\sigma }$ is the action on $ \chpl_B$ defined using the new enumeration, $\beta (\tau ,\pi'_{\sigma })=\beta (\tau ,\pi_{\sigma\rho })=\pi_{\tau\sigma\rho }=\pi'_{\tau\sigma }=\beta'(\tau ,\pi'_{\sigma })$, whence $\beta =\beta'$.

Now let $C\in [B]$, and let $(c_1,\ldots ,c_n)\in\pi_{id}$ such that $C=\{ c_1,\ldots ,c_n\} $, where $id$ is the identity permutation on $\{ 1,\ldots ,n\} $.
If $\pi'_{\sigma }$ is the plane defined as in \eqref{planesn} using $(c_1,\ldots ,c_n)$, that is
\[
\pi'_{\sigma }:(c_{\sigma (j)}-c_{\sigma (2)})x_1+(c_{\sigma (1)}-c_{\sigma (j)})x_2+(c_{\sigma (2)}-c_{\sigma (1)})x_j=0,\quad j\in\{ 3,\ldots ,n\} ,
\]
it is enough to show that $\pi'_{\sigma }=\pi_{\sigma }$ for every $\sigma\in Sym_n$.
Since $r\subseteq\pi_{\sigma }\cap\pi'_{\sigma }$ and $(c_{\sigma (1)},\ldots ,c_{\sigma (n)})\in\pi'_{\sigma }$ by direct substitution into the equation, it is in turn enough to show that $(c_{\sigma (1)},\ldots ,c_{\sigma (n)})\in\pi_{\sigma }$, that is $(c_1,\ldots ,c_n)\in\pi =\{ (p_1,\ldots ,p_n)\in k^n\mid (p_{\sigma (1)},\ldots ,p_{\sigma (n)})\in\pi_{\sigma }\} $.
But this follows since $\pi $ is a plane including $r$ and containing the point $(b_1,\ldots ,b_n)$, so $\pi =\pi_{id}$.
\end{proof}

Therefore, letting $ \mathcal F $ be the collection of planes containing line $r$ and that are not contained in any hyperplane $x_j=x_{j'}$, for $j\ne j'$, definition \ref{action} yields an action $Sym_n\times \mathcal F \to \mathcal F $, which we still denote $\beta $.
The orbits are the sets of characteristic planes of a given polynomial class.

Given $(b_1,\ldots ,b_n)\in D(f_n)$, the plane in $ \mathcal F $ which $(b_1,\ldots ,b_n)$ belongs to is obtained by choosing in \eqref{planesn} the permutation $\sigma $ to be the identity, that is
\begin{equation} \label{planepi}
\pi_{id}: \left \{ \begin{array}{lcl} (b_2-b_1) (x_3 -x_1) +  (b_1-b_3) (x_2-x_1) & = & 0\\
\ldots & & \\
                         (b_2-b_1) (x_j -x_1) +  (b_1-b_j) (x_2-x_1) & = & 0 \\
\ldots & & \\
                        (b_2-b_1) (x_n -x_1) +  (b_1-b_n) (x_2-x_1) & =& 0
\end{array} \right . .
\end{equation}

For $j\in\{ 1,\ldots ,n\}$ let
\begin{equation} \label{deflambda}
\lambda_j= \frac{b_1-b_j}{b_1-b_2} .
\end{equation}
Notice that $\lambda_1=0,\lambda_2=1$, and the subset of $k^{n-2}$ over which $(\lambda_3,\ldots ,\lambda_n)$ ranges for $(b_1,\ldots ,b_n)\in D(f_n)$ is defined by the conditions
\begin{equation} \label{eqlambdah} \begin{array}{l}
\forall j\in\{ 3,\ldots ,n\} ,\lambda_j\notin\{ 0,1\} \\
\forall j,j'\in\{ 3,\ldots ,n\} ,j\ne j'\Rightarrow\lambda_j\ne\lambda_{j'}
\end{array} . \end{equation}

The subset of all $(\lambda_3,\ldots ,\lambda_n)\in k^{n-2}$ satisfying \eqref{eqlambdah} is a standard open subset of $k^{n-2}$, namely it is $D(S_n)$ where $S_n(\Lambda_3,\ldots ,\Lambda_n)=\prod_{h=3}^n\Lambda_h(\Lambda_h-1)\cdot\prod_{3\le h<h'\le n}(\Lambda_h-\Lambda_{h'})$.
Therefore \eqref{deflambda} defines a surjective morphism $\eta :D(f_n)\to D(S_n)$:
\[
\eta (b_1,\ldots ,b_n)= \left (
\frac{b_1-b_3}{b_1-b_2} ,\ldots , \frac{b_1-b_n}{b_1-b_2}
\right )
.
\]

System \eqref{planepi} can be written as
\begin{equation} \label{sheaves}
(x_j-x_1) + \lambda_j(x_1-x_2) = 0,\qquad \text{for } j\in\{ 3,\ldots ,n\}
\end{equation}
where the relationships between the parameters are given by
\begin{equation} \label{projectivity}
\lambda_j= \frac{b_1-b_j}{b_1-b_{j'}} \lambda_{j'},\qquad \text{for } j,j'\in\{ 3,\ldots ,n\} .
\end{equation}

The following remark gathers some consequences of equations \eqref{sheaves}.

\begin{remark} \label{pointszerone}
\begin{enumerate}
\item Every characteristic plane \eqref{planepi} contains exactly one point with $x_1=0,x_2=1$, namely the point $\lambda =(0,1,\lambda_3,\ldots ,\lambda_n)$.
\item The fibers of the morphism $\eta $ are the characteristic planes, deprived of line $r$: that is, $\eta^{-1}(\{ (\lambda_3,\ldots ,\lambda_n)\} )$ is the characteristic plane, without line $r$, containing the point $(0,1,\lambda_3,\ldots ,\lambda_n)$.
\item Equations \eqref{sheaves} imply also that
\begin{equation} \label{lambdax}
\lambda_j= \frac{x_1-x_j}{x_1-x_2} .
\end{equation}
Therefore the knowledge of $(\lambda_3,\ldots ,\lambda_n)\in D(S_n)$ allows to find all elements $\{ x_1,\ldots ,x_n\} $ of $[\{ 0,1,\lambda_3,\ldots ,\lambda_n\} ]$ by fixing arbitrarily distinct elements $x_1,x_2\in k$ and applying \eqref{lambdax}.
\item The function $(\lambda_3,\ldots ,\lambda_n)\mapsto (0,1,\lambda_3,\ldots ,\lambda_n)$ is a morphism $\xi :D(S_n)\to D(f_n)$, which is a right inverse of $\eta $.
Therefore, setting $\eta'=\theta'\theta\xi :D(S_n)\to\Sigma_n$ we have the following commutative diagram of morphisms between varieties:
\begin{equation} \label{commdiag} \begin{tikzcd}[column sep=2cm]
D(f_n)\arrow[r,"\theta"] \arrow[d,"\eta"] & \mathcal P_n \arrow[d,"\theta'"] \\
D(S_n)\arrow[r,"\eta'"] & \Sigma_n
\end{tikzcd} \end{equation}
Given $\Theta\in\Sigma_n$, the fiber $\eta^{\prime -1}(\{\Theta\} )$ is a set of $\chi (\Theta )$ elements.

All these morphisms depend on $n$.
When it is important to specify the value of $n$, this will be added as a subscript.
\item From the commutative diagram \eqref{commdiag} it also follows that a subset $F\subseteq\Sigma_n$ is closed if and only if $\eta^{\prime -1}(F)$ is closed in $D(S_n)$.
In other words, the topology on $\Sigma_n$ is homeomorphic to the quotient topology of $D(S_n)$ with respect to the equivalence relation induced by $\eta'$.
\end{enumerate}
\end{remark}

The definition \eqref{deflambda} of the coefficients $(\lambda_3,\ldots ,\lambda_n)$ can be used to provide complete invariants for the relation of polynomial bireducibility on finite sets.
More precisely, if in addition to the Zariski topology $k$ also carries a Polish topology with respect to which the operations are Borel, like in the case $k= \C $, the discussion above can be reframed in terms of the classification of equivalence relations under Borel reducibility ---a general reference for the subject is \cite{gao2009}.
Notice that in this case $Fin=\bigcup_{n\in \N } \mathcal P_n$ is a $F_{\sigma }$ subset of $K(k)$, the Polish space of compact subsets of $k$ endowed with the Vietoris topology, since each $\bigcup_{h\le n} \mathcal P_h$ is a closed set.
Therefore $Fin$ is a standard Borel space.
The following proposition shows that from a descriptive set theoretic standpoint polynomial equivalence on finite sets is quite a simple equivalence relation.

\begin{proposition}
If $k$ is endowed with a Polish topology with respect to which the operations are Borel, then the restriction of the equivalence relation $ \eqpol $ to $Fin$ is a smooth equivalence relation.
\end{proposition}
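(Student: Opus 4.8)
The plan is to prove smoothness on each $ \mathcal P_n$ separately and then patch. For $n\le 2$ there is nothing to do, since by proposition \ref{incrcard} the relation $ \eqpol $ restricted to $ \mathcal P_n$ has a single equivalence class. So fix $n\ge 3$, and recall from the material around the commutative diagram \eqref{commdiag} the morphisms $\eta\colon D(f_n)\to D(S_n)$, $\xi\colon D(S_n)\to D(f_n)$ (a right inverse of $\eta $), and $\eta'=\theta'\theta\xi\colon D(S_n)\to\Sigma_n$, together with the identity $\theta'\theta=\eta'\eta$, which is exactly the commutativity of \eqref{commdiag}.

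\emph{Step 1: a Borel reduction of $ \eqpol|_{ \mathcal P_n}$ to the fiber relation of $\eta'$.} The finite subsets of a Polish space admit a Borel enumeration, and points of $D(f_n)$ are precisely those tuples with pairwise distinct coordinates, so there is a Borel map $e\colon \mathcal P_n\to D(f_n)$ with $e(A)=(a_1,\dots ,a_n)$ an enumeration of $A$. Put $\lambda=\eta\circ e\colon \mathcal P_n\to D(S_n)$; this is Borel, and by \eqref{deflambda} it sends $A$ to its tuple of coefficients $(\lambda_3,\dots ,\lambda_n)$. From $\theta'\theta=\eta'\eta$ we get $[A]=\theta'\theta(e(A))=\eta'(\lambda(A))$, hence $A \eqpol A'$ iff $\eta'(\lambda(A))=\eta'(\lambda(A'))$. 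Thus $\lambda $ is a Borel reduction of $ \eqpol|_{ \mathcal P_n}$ to the equivalence relation $\sim $ on $D(S_n)$ whose classes are the fibers of $\eta'$.

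\emph{Step 2: $\sim $ is smooth.} By remark \ref{pointszerone}(4) each fiber of $\eta'$, hence each $\sim $-class, has $\chi (\Theta )\le n!$ elements, since a polynomial class has at most $n!$ characteristic planes (definition \ref{defexcptl}). Moreover $\sim $ is Borel: for $\lambda ,\mu\in D(S_n)$ one has $\lambda\sim\mu $ iff for some $\sigma\in Sym_n$ the linear polynomial matching $\{ 0,1,\lambda_3,\dots ,\lambda_n\} $ to the $\sigma $-reordering of $\{ 0,1,\mu_3,\dots ,\mu_n\} $ exists and works; since such a polynomial, if it exists, is already determined by the images of two coordinates as in \eqref{040820221015}, this is a finite disjunction of algebraic conditions on $(\lambda ,\mu )$, hence Borel. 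Finally, a Borel equivalence relation with uniformly bounded finite classes is smooth: fixing a Borel linear order $\preceq $ on the standard Borel space $D(S_n)$, the map sending a point to the $\preceq $-least element of its ($\le n!$-element, Borel-enumerable) $\sim $-class is a Borel reduction to equality (see also \cite{gao2009}). Composing with Step 1 shows $ \eqpol|_{ \mathcal P_n}$ is smooth, say via a Borel invariant $\iota_n\colon \mathcal P_n\to \R $.

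\emph{Step 3: patching, and the main point.} As noted before the statement, $Fin$ is a standard Borel space and each $ \mathcal P_n$ is Borel in it; by proposition \ref{cardinality} two finite sets of different cardinalities are never $ \eqpol $-related, so $ \eqpol|_{Fin}$ is the disjoint sum of the relations $ \eqpol|_{ \mathcal P_n}$. Then $A\mapsto (\card (A),\iota_{\card (A)}(A))$ is a Borel reduction of $ \eqpol|_{Fin}$ to equality on $ \N\times \R $, so $ \eqpol|_{Fin}$ is smooth. The only step requiring a little care is the verification in Step 2 that $\sim $ is Borel with uniformly bounded finite classes; everything else is bookkeeping on top of the structure already developed around \eqref{commdiag}, and I do not expect a genuine obstacle.
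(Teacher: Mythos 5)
Your argument is correct and follows essentially the same route as the paper: a Borel enumeration of the $n$-element set, passage to the $\lambda$-invariants of \eqref{deflambda}, and canonical selection among the at most $n!$ resulting values via a Borel linear order, with the trivial patching over $n$ via proposition \ref{cardinality}. The only cosmetic difference is that the paper minimises directly over the $n!$ permuted enumerations of $A$, whereas you first descend to $D(S_n)$ and invoke the general fact that a Borel equivalence relation with finite classes (here, the fibers of $\eta'$) is smooth; the two selectors pick out the same invariant.
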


\begin{proof}
It is enough to prove that for each $n\ge 3$ the restriction $ \eqpol^n$ of $ \eqpol $ to $ \mathcal P_n$ is smooth.
For this, a Borel function $\Phi : \mathcal P_n\to k^{n-2}$ is defined such that $A \eqpol^nB\Leftrightarrow\Phi (A)=\Phi (B)$.
Fix a Borel total order $\trianglelefteq$ of $k^{n-2}$.

By \cite[exercise 18.15]{kechri1995} let $\Phi_1,\ldots ,\Phi_n: \mathcal P_n\to k$ be Borel and such that $\forall A\in \mathcal P_n,A=\{\Phi_1(A),\ldots ,\Phi_n(A)\} $.
Let $\Lambda (b_1,\ldots ,b_n)=(\lambda_3,\ldots ,\lambda_n)$ be defined using \eqref{deflambda}.
Then for every $A\in \mathcal P_n$ define $\Phi (A)$ as the $\trianglelefteq$-least of all $\Lambda (\Phi_{\sigma (1)}(A),\ldots ,\Phi_{\sigma (n)}(A))$ for $\sigma\in Sym_n$.
Function $\Phi $ has the desired properties.
\end{proof}

A detailed investigation of the descriptive set theoretic properties of polynomial reducibility and polynomial equivalence is not in the scope of this paper.
We point out nevertheless the following question.

\begin{question}
What is the descriptive set theoretic complexity of $ \eqpol , \pol $ on $K(k)$?
\end{question}

\subsection{$\Sigma_3$ is a line} \label{170920221621}
Let $R_n^{\gamma }$ be the subspace of $k[X_1,\ldots ,X_n]$ consisting of all polynomials that are symmetric, homogeneous of degree $\gamma $, and translation invariant (including the zero polynomial).
By \cite{liptra2008}, a basis of the vector space $R_n^{\gamma }$ is given by the polynomials
\[
W_I(X_1,\ldots ,X_n)=\prod_{\ell =2}^n \left [
\sum_{j=1}^n \left (
X_j- \frac 1n \sum_{l=1}^nX_l
\right )^{\ell }
\right ]^{i_{\ell }}
\]
where $I$ is a partition of $\gamma $ into integers from $2$ to $n$ and $i_{\ell }$ is the multiplicity of $\ell $ in $I$.
Letting
\begin{equation} \label{basisw}
W_{\ell }=\sum_{j=1}^n \left (
X_j- \frac 1n \sum_{l=1}^nX_l
\right )^{\ell },
\end{equation}
the elements of such a basis are all polynomials of the form
\begin{equation*}
W_2^{i_2}W_3^{i_3}\cdot\ldots\cdot W_n^{i_n}
\end{equation*}
where $2i_2+3i_3+\ldots +ni_n=\gamma $.

We now apply this fact and the above discussion to answer question \ref{conjnnmt} for $n=3$.

\begin{theorem} \label{linethree}
$\Sigma_3$ is isomorphic to an affine line.
\end{theorem}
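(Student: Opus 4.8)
The plan is to exhibit an explicit isomorphism $\Sigma_3 \cong k$ by writing down a regular function on $\Sigma_3$ that is a complete invariant for polynomial bireducibility of $3$-element sets and whose fibers are single points, and then checking it is an isomorphism of affine varieties. For $n=3$ everything is very concrete: $D(S_3)$ is the standard open subset $D(\Lambda(\Lambda-1))$ of the affine line $k$, consisting of those $\lambda \ne 0,1$, and by remark \ref{pointszerone}(4)--(5) the topology on $\Sigma_3$ is the quotient topology of $D(S_3)$ under the equivalence relation induced by $\eta' = \theta'\theta\xi$. So the first task is to understand this equivalence relation on $\{\lambda \in k \mid \lambda \ne 0,1\}$: two parameters $\lambda,\mu$ give bireducible sets $\{0,1,\lambda\} \eqpol \{0,1,\mu\}$ iff $\mu$ lies in the $Sym_3$-orbit of $\lambda$ under the action coming from $\beta$, which by \eqref{projectivity} (or directly by listing the six linear polynomials sending $\{0,1,\lambda\}$ onto a set of the form $\{0,1,\mu\}$) is the familiar action with orbit
\[
\left\{ \lambda,\ 1-\lambda,\ \frac 1\lambda,\ \frac 1{1-\lambda},\ \frac{\lambda}{\lambda-1},\ \frac{\lambda-1}{\lambda} \right\}.
\]
This is exactly the anharmonic group acting on the cross-ratio, so the sought-after invariant is the classical $j$-type invariant
\[
J(\lambda) = \frac{(\lambda^2-\lambda+1)^3}{\lambda^2(\lambda-1)^2},
\]
which is a rational function, regular on $D(S_3)$, constant on orbits, and separates distinct orbits.

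The key steps, in order. First, compute the orbit relation explicitly: write out the (at most $3! = 6$) linear reductions as above and identify the six-element orbit, noting the three exceptional cases where the orbit is smaller (these are the exceptional classes of section \ref{excclasses}, but here they do not obstruct anything). Second, verify that $J$ is invariant under the substitutions $\lambda \mapsto 1-\lambda$ and $\lambda \mapsto 1/\lambda$ which generate the group — a short symmetric-function computation — so $J$ descends to a well-defined function $\bar J : \Sigma_3 \to k$; since $J$ is a ratio of a polynomial by a power of $f_3$ (up to the change of coordinates relating $(b_1,b_2,b_3)$ to $\lambda$, and after clearing denominators one checks $\lambda^2(\lambda-1)^2$ is a power of $S_3$), $\bar J$ is a regular function on $\Sigma_3$, i.e. $\bar J \in \mathcal A$. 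Third, show $\bar J$ is injective: if $J(\lambda) = J(\mu)$ then clearing denominators gives a degree-$6$ polynomial equation in $\lambda$ whose roots, for fixed value $c = J(\mu)$, are precisely the six elements of the orbit of $\mu$ (one checks the polynomial $(\Lambda^2-\Lambda+1)^3 - c\,\Lambda^2(\Lambda-1)^2$ factors as the product of $(\Lambda - \text{orbit element})$, e.g. by observing it is invariant under the group action and has $\mu$ as a root), hence $\lambda$ is in the orbit of $\mu$ and $[\{0,1,\lambda\}] = [\{0,1,\mu\}]$. Fourth, show $\bar J$ is surjective onto $k$: given $c \in k$, the polynomial $(\Lambda^2-\Lambda+1)^3 - c\,\Lambda^2(\Lambda-1)^2$ has degree $6$ with nonzero leading coefficient, so (as $k$ is algebraically closed) it has a root $\lambda$, and that root automatically satisfies $\lambda \ne 0,1$ since $\Lambda = 0$ and $\Lambda = 1$ give the value $1 \ne$ "$0$" — more precisely plugging $\Lambda = 0$ or $1$ into the left side gives $1$, never $0$, so the root is in $D(S_3)$; thus $c = J(\lambda) = \bar J([\{0,1,\lambda\}])$. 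Finally, assemble: $\bar J : \Sigma_3 \to k$ is a bijective morphism of affine varieties, and by remark \ref{pointszerone}(5) it is a homeomorphism; to upgrade to an isomorphism of varieties one checks the inverse is a morphism, for instance by exhibiting $\eta'$ and noting that $\bar J$ composed with $\eta'$ is the morphism $J : D(S_3) \to k$, which together with the fact that $\Sigma_3$ carries the quotient topology/structure from $D(S_3)$ forces $\bar J$ to be an isomorphism onto the affine line $k$ (alternatively, since $\dim \Sigma_3 = 1$ by corollary \ref{02102022}(2) and $\Sigma_3$ is irreducible, a bijective morphism to the normal variety $\mathbb{A}^1$ from a variety is an isomorphism once one knows $\Sigma_3$ is normal, which follows from it being a geometric quotient of the smooth $D(S_3)$ by a finite group — but the direct argument via the quotient structure is cleaner).

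The main obstacle I expect is the last step — passing from "bijective morphism" to "isomorphism of affine varieties" — since over an algebraically closed field a bijective morphism need not be an isomorphism (e.g. the normalization of a cusp). The clean way around this is not to prove injectivity/surjectivity of an abstract map, but to work directly with the quotient structure: $\Gamma(\Sigma_3) = \mathcal A$ is by lemma \ref{lemnnrgamma} the ring of symmetric, translation-invariant, appropriately-homogeneous elements of $\Gamma(D(f_3))$, and one should show this ring is exactly $k[\bar J]$, a polynomial ring in one variable, which gives $\Sigma_3 \cong \mathbb{A}^1$ on the nose. Concretely: an element of $\mathcal A$, pulled back along $\xi$, becomes a $Sym_3$-invariant (for the anharmonic action) element of $\Gamma(D(S_3)) = k[\Lambda]_{\Lambda(\Lambda-1)}$; a localization computation plus the classical fact that the invariants of the anharmonic group on $k(\Lambda)$ form $k(J)$, intersected with $k[\Lambda]_{\Lambda(\Lambda-1)}$, yields precisely $k[J]$ (one must check no negative powers of $\Lambda$ or $\Lambda-1$ can occur in a $G$-invariant element, using that $0,1,\infty$ lie in one orbit so a pole at one forces poles at all, contradicting regularity at $\infty$ — i.e. boundedness of degree). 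This identifies $\Gamma(\Sigma_3)$ with $k[\bar J]$ and finishes the proof that $\Sigma_3$ is an affine line.
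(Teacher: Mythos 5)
Your proof is essentially correct but takes a genuinely different route from the paper. The paper never leaves the ring of invariants: by lemma \ref{lemnnrgamma} an element of $\Gamma (\Sigma_3)$ is $g/f_3^{\gamma}$ with $g$ symmetric, translation invariant and homogeneous of degree $6\gamma$; using the basis $W_2,W_3$ of \cite{liptra2008} and the relation $f_3=\alpha_0W_2^3+\alpha_1W_3^2$, the algebra is read off as the ring of degree-$0$ fractions of binary forms in $T_2=W_2^3,\ T_3=W_3^2$ over powers of $\alpha_0T_2+\alpha_1T_3$, i.e.\ $\Gamma (\mathbb P^1\setminus\{\text{pt}\})$. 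You instead work on the $\lambda$-line via $\eta'$ and the anharmonic action, produce the classical invariant $J(\lambda )=(\lambda^2-\lambda +1)^3/\lambda^2(\lambda -1)^2$, and --- correctly identifying that bijectivity of a morphism is not enough --- reduce to showing $\Gamma (\Sigma_3)=k[\bar J]$. Your approach buys an explicit coordinate on $\Sigma_3$ and an explicit description of the fibers (including the exceptional classes as the ramification points of $J$), at the cost of having to verify the invariant-theoretic identification by hand; the paper's computation is shorter and generalises in spirit to the question of identifying $\Sigma_n$ for larger $n$, but produces no explicit invariant.

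One local slip to fix: your justification that no negative powers of $\Lambda$ or $\Lambda -1$ occur in a $G$-invariant element of $k[\Lambda ]_{\Lambda (\Lambda -1)}$ is wrong as stated --- $J$ itself has poles at $0$ and $1$ (and at $\infty$), so invariant elements certainly do carry negative powers of $\Lambda$ and $\Lambda -1$. The correct statement is $k(J)\cap k[\Lambda ]_{\Lambda (\Lambda -1)}=k[J]$, proved by writing an invariant $F=p(J)/q(J)$ in lowest terms: if $q$ were non-constant it would have a root $c\in k$, and surjectivity of $J$ onto $k$ gives $\lambda\in D(S_3)$ with $J(\lambda )=c$, $p(c)\ne 0$, so $F$ would have a pole inside $D(S_3)$, contradicting regularity there. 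You should also make explicit that $J$ lifts to $\mathcal A$, e.g.\ via
\[
\lambda^2-\lambda +1= \frac{\tfrac 12\left ((b_1-b_2)^2+(b_1-b_3)^2+(b_2-b_3)^2\right )}{(b_1-b_2)^2},
\]
whose numerator is symmetric, homogeneous of degree $2$ and translation invariant; together with $\xi^*$ being injective on $\mathcal A$ and landing in the invariants, this closes the identification $\Gamma (\Sigma_3)\cong k[J]$.
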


\begin{proof}
We show that the algebra $\Gamma (\Sigma_3)$ is isomorphic to the algebra of regular functions of an affine line.

By lemma \ref{lemnnrgamma}, if $F\in\Gamma (\Sigma_3)$ then $F= \frac{g(X_1,X_2,X_3)}{[f_3(X_1,X_2,X_3)]^{\gamma }} $ where $g$ is symmetric, homogeneous of degree $6\gamma $, and translation invariant.
By applying \eqref{basisw}, we can write
\[
F= \frac{\sum_{2i_2+3i_3=6\gamma }\beta_{i_2i_3}W_2^{i_2}W_3^{i_3}}{f_3^{\gamma }}
\]
where $f_3=\sum_{2j_2+3j_3=6}\alpha_{j_2j_3}W_2^{j_2}W_3^{j_3}$; since $2j_2+3j_3=6$ means that either $j_2=3,j_3=0$ or $j_2=0,j_3=2$, we have $f_3=\alpha_0W_2^3+\alpha_1W_3^2$, for some fixed coefficients $\alpha_0,\alpha_1$ ---which are not both null.

The relation $2i_2+3i_3=6\gamma $ implies that $i_3$ is even, so let $i_3=2i'_3$ with $0\le i'_3\le\gamma $, whence $i_2=3\gamma -3i'_3$.
Setting $T_2=W_2^3,T_3=W_3^2$, we get
\begin{equation} \label{eqnthreecompl}
F= \frac{\sum_{i'_3=0}^{\gamma }\beta_{i'_3}T_2^{\gamma -i'_3}T_3^{i'_3}}{(\alpha_0T_2+\alpha_1T_3)^{\gamma }} .
\end{equation}
The algebra of all functions as in \eqref{eqnthreecompl} is the algebra of regular functions of the complement of the singleton $\{ (\alpha_1:-\alpha_0)\} $ in the projective line $ \mathbb P^1$, that is an affine line.
\end{proof}

\subsection{Exceptional classes} \label{excclasses}

\begin{definition} \label{defexcsetclass}
Let $n\ge 3$ and let $B\in \mathcal P_n$.
If $\chi (B)<n!$ we say that $B$ is an \emph{exceptional set} and $[B]$ is an \emph{exceptional class}.
\end{definition}

So a polynomial class is an exceptional class if and only if there are distinct $\sigma ,\sigma'\in Sym_n$ such that the planes $\pi_{\sigma },\pi_{\sigma'}$ described by the system \eqref{planesn} for $\sigma $ and for $\sigma'$ coincide.

Exceptional classes have a peculiar role with respect to the relation $ \pol $, see theorem \ref{twontwo}.
So the following questions appear to be natural.

\begin{itemize}
\item What are the exceptional classes?
\item Given an exceptional class, what is its characteristic number?
\end{itemize}

The polynomial class $[B]$ is an exceptional class if and only if given some (equivalently, any) $\pi\in \chpl_B$, the stabiliser of $\pi $ under action $\beta $ is not the trivial subgroup of $Sym_n$; equivalently, if and only if there exists a non-identity permutation $\sigma $ such that $\pi_{\sigma }=\pi_{id}$.
Therefore, by \eqref{sheaves}, a polynomial class $[\{ b_1,\ldots ,b_n\} ]$ is an exceptional class if and only if there exists a non-identity permutation $\sigma $ such that, for every $j\in\{ 3,\ldots ,n\} $,
\begin{equation} \label{precrucialrel}
\lambda_j= \frac{b_{\sigma (1)}-b_{\sigma (j)}}{b_{\sigma (1)}-b_{\sigma (2)}} ,
\end{equation}
that is
\begin{equation} \label{precrucialrelbis}
\frac{b_{\sigma (1)}-b_{\sigma (2)}}{b_1-b_2} = \frac{b_{\sigma (1)}-b_{\sigma (j)}}{b_1-b_j} .
\end{equation}
Notice that \eqref{precrucialrel} always holds for $j\in\{ 1,2\} $, while \eqref{precrucialrelbis} always holds for $j=2$.
Note also that equations \eqref{precrucialrel} can be written equivalently as
\begin{equation} \label{crucialrel}
\lambda_{\sigma (j)}-\lambda_{\sigma (1)}=\lambda_j(\lambda_{\sigma (2)}-\lambda_{\sigma (1)}).
\end{equation}

Any enumeration $B=\{ b_1,\ldots ,b_n\} $ induces an isomorphism (which depends on the chosen enumeration)
\begin{equation} \label{06062022833}
Sym(B)\to Sym_n,\qquad\tau\mapsto\sigma
\end{equation}
defined by
\begin{equation} \label{tausigma}
\tau (b_j)=b_{\sigma (j)}.
\end{equation}
The images of a fixed $\tau\in Sym(B)$ under the isomorphisms \eqref{06062022833} induced by different enumerations of $B$ are conjugate.

If $\tau\in Sym(B)$, given distinct $b,b'\in B$ define
\begin{equation} \label{diffq}
Y_{\tau }(b,b')= \frac{\tau (b)-\tau (b')}{b-b'} .
\end{equation}

The following gives a characterisation of exceptional classes in terms of the difference quotient \eqref{diffq}.

\begin{proposition} \label{propexistsforall}
Let $B\in \mathcal P_n$.
The following are equivalent:
\begin{enumerate}
\item The polynomial class $[B]$ is an exceptional class.
\item For any $b \in B$ there exists a non-identity permutation $\tau\in Sym(B)$ such that the difference quotient $Y_{\tau }(b ,b')$ is constant for every $b'\in B\setminus\{ b \} $.
\item There exist $b \in B$ and a non-identity permutation $\tau\in Sym(B)$ such that the difference quotient $Y_{\tau }(b ,b')$ is constant for every $b'\in B\setminus\{ b \} $.
\item There exists a non-identity permutation $\tau\in Sym(B)$ such that the difference quotient $Y_{\tau }(b,b')$ is constant for any distinct $b,b'\in B$.
\item The stabiliser of $B$ under action $\alpha $ of definition \ref{20112022} is not trivial.
\end{enumerate}
\end{proposition}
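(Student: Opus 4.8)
The plan is to establish the cycle of implications
$(1)\Rightarrow(5)\Rightarrow(4)\Rightarrow(2)\Rightarrow(3)\Rightarrow(1)$, using the translation between permutations $\tau\in Sym(B)$ and permutations $\sigma\in Sym_n$ given by \eqref{tausigma}, together with the characterisation of exceptional classes by condition \eqref{precrucialrelbis}. The key observation is that, after fixing an enumeration $B=\{b_1,\dots,b_n\}$, a non-identity $\tau\in Sym(B)$ with constant difference quotient $Y_\tau$ corresponds exactly to a non-identity $\sigma\in Sym_n$ satisfying \eqref{precrucialrelbis} for all $j$, which is precisely the condition $\pi_\sigma=\pi_{id}$, i.e. exceptionality.

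First I would note the equivalence of $(1)$ and $(5)$: condition $(5)$ says $Stab_B^\alpha$ is non-trivial, and by remark \ref{fewpols} and corollary \ref{coractionalpha} a non-identity element of $Stab_B^\alpha$ is a linear polynomial $P(X)=cX+c'$ with $P(B)=B$; writing $P(b_j)=b_{\sigma(j)}$ defines $\sigma\in Sym_n\setminus\{id\}$, and the relation $b_{\sigma(j)}-b_{\sigma(j')}=c(b_j-b_{j'})$ for all $j,j'$ is exactly \eqref{precrucialrelbis}, hence $\pi_\sigma=\pi_{id}$ and $[B]$ is exceptional. Conversely an exceptional class gives such a $\sigma$, and then $c=Y_\tau(b_1,b_2)=(b_{\sigma(1)}-b_{\sigma(2)})/(b_1-b_2)$ together with $c'=b_{\sigma(1)}-cb_1$ defines a non-identity linear polynomial fixing $B$ setwise, so $Stab_B^\alpha$ is non-trivial. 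This simultaneously shows $(5)\Rightarrow(4)$, since for the $\tau$ corresponding to a stabilising $P$, the difference quotient $Y_\tau(b,b')=(P(b)-P(b'))/(b-b')=c$ is the leading coefficient of $P$, constant over all distinct $b,b'\in B$.

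The implications $(4)\Rightarrow(2)$ and $(2)\Rightarrow(3)$ are immediate ($(4)$ asks for a single $\tau$ working for all pairs, which in particular works for all pairs through any fixed $b$; and $(2)$ quantifies over all $b\in B$ while $(3)$ only asserts existence of one). The remaining implication $(3)\Rightarrow(1)$ is where the small amount of real content lies: given $b\in B$ and non-identity $\tau\in Sym(B)$ with $Y_\tau(b,b')=c$ constant for all $b'\in B\setminus\{b\}$, I would set $P(X)=cX+(\tau(b)-cb)$; then $P(b)=\tau(b)$ and for every $b'\neq b$ one has $P(b')-P(b)=c(b'-b)=\tau(b')-\tau(b)$, hence $P(b')=\tau(b')$, so $P(B)=\tau(B)=B$ and $P$ is non-constant (as $c=Y_\tau\neq 0$), and $P$ cannot be the identity since $\tau$ is not. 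Thus $Stab_B^\alpha$ is non-trivial, which by the equivalence of $(5)$ and $(1)$ already established gives that $[B]$ is exceptional.

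The main obstacle — really the only subtle point — is to be careful that a constant difference quotient through a \emph{single} base point $b$ already forces $\tau$ to be the restriction to $B$ of a linear map; this is what makes $(3)$, the weakest-looking hypothesis, as strong as the others, and it uses essentially that two points and a slope determine an affine function. One should also check the degenerate bookkeeping: that $Y_\tau$ is well-defined and nonzero because $\tau$ is injective, and that $c\neq 1$ or $c'\neq 0$ is automatic from $\tau\neq id$ — but these are routine. Everything else is unwinding definitions \eqref{tausigma}, \eqref{diffq} and the description \eqref{precrucialrelbis} of exceptionality.
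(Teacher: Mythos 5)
Your proposal is correct and uses the same essential ingredients as the paper's proof: the translation between \eqref{precrucialrelbis} and constant difference quotients via \eqref{tausigma}, and the extension of $\tau$ to the linear polynomial $P(X)=cX+\tau(b)-cb$ (the paper's \eqref{defpcrit}), which is the real content of the weakest-looking hypothesis $(3)$. The only difference is organisational --- you arrange the implications as a single cycle through $(5)$ and obtain $(4)$ from the stabilising polynomial, whereas the paper proves $(3)\Rightarrow(4)$ by a direct algebraic manipulation --- so the two arguments are essentially the same.
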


\begin{proof}
$(1)\Rightarrow (2)$.
Let $B=\{ b_1,\ldots ,b_n\} $, with $b_1=b $.
Therefore \eqref{precrucialrelbis} provides a non-identity $\sigma\in Sym_n$ such that the ratio $ \frac{b_{\sigma (1)}-b_{\sigma (j)}}{b_1-b_j} $ does not depend on $j\ne 1$.
Defining $\tau $ as is \eqref{tausigma} yields the conclusion.

$(4)\Rightarrow (2)\Rightarrow (3)$ is clear.

$(3)\Rightarrow (1)$.
Let $B=\{ b_1,\ldots ,b_n\} $ with $b_1=b $.
Define $\sigma $ as is \eqref{tausigma}.
Then \eqref{precrucialrelbis} holds.

$(3)\Rightarrow (4)$.
Let $\tau ,b$ satisfy (3).
Given distinct $b',b''\in B\setminus\{ b\} $, let $\varepsilon =Y_{\tau }(b,b')=Y_{\tau }(b,b'')$.
Therefore, $\tau (b)-\tau (b')=\varepsilon (b-b')$, whence $\tau (b)-\varepsilon b=\tau (b')-\varepsilon b'$, and similarly $\tau (b)-\varepsilon b=\tau (b'')-\varepsilon b''$.
It follows that $\tau (b')-\varepsilon b'=\tau (b'')-\varepsilon b''$, and finally $Y_{\tau }(b',b'')=\varepsilon $.

$(3)\Rightarrow (5)$.
Let $\tau ,b$ satisfy (3), and set $\varepsilon =Y_{\tau }(b,b')$, for any $b'\ne b$.
Define
\begin{equation} \label{defpcrit}
P(X)=\varepsilon X-\varepsilon b+\tau (b).
\end{equation}
Then $P(B)=B$, since
\begin{equation} \label{equpolynbb}
\forall x\in B,P(x)=\tau (x).
\end{equation}

$(5)\Rightarrow (4)$.
If $P\in \mathcal L \setminus\{ X\} $ is such that $P(B)=B$, then the restriction of $P$ to $B$ determines a non-identity permutation $\tau $ of $B$; the value of the difference ratios $Y_{\tau }(b,b')$ is constant and coincide with the coefficient of degree $1$ of $P$.
\end{proof}

In the proof of $(3)\Rightarrow (5)$ of proposition \ref{propexistsforall} the polynomial $P$ does not depend on the choice of $b$, but only on $\tau $.
Indeed, if $P_{b_0},P_{b_1}$ are the polynomials defined by choosing $b_0,b_1$, respectively, from \eqref{equpolynbb} it follows $P_{b_0}=P_{b_1}$ since both $P_{b_0},P_{b_1}$ are linear and coincide on $n$ points.

\begin{definition} \label{defpitau}
Let $B\in \mathcal P_n,\tau\in Sym(B)$.
Then $\tau $ is a \emph{characteristic permutation} of $B$ if $Y_{\tau }(b,b')$ is constant, for $b\ne b'$; in this case, denote $Y_{\tau }$ such a value.
Also, let $G_B$ be the set of characteristic permutations of $B$.
For $\tau\in G_B$, let $P_{\tau }$ be the polynomial defined as in \eqref{defpcrit}, and set $H_B=\{ P_{\tau }\mid\tau\in G_B\} $.
\end{definition}

\begin{proposition} \label{charactzero}
Let $B\in \mathcal P_n$.
\begin{enumerate}
\item $G_B$ is a subgroup of $Sym(B)$, $H_B$ is the stabiliser of $B$ under action $\alpha $, and the function $\tau\mapsto P_{\tau }$ is an isomorphism between them.
\item The function $Y:\tau\mapsto Y_{\tau }$ is an injective morphism of the group $G_B$ into the multiplicative group $k\setminus\{ 0\} $.
\end{enumerate}
\end{proposition}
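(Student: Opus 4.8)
The plan is to prove the two items together, exploiting the transfer between $Sym(B)$ and the stabiliser $H_B \le \mathcal L$ via the correspondence $\tau \mapsto P_\tau$. The key observation already available from Proposition \ref{propexistsforall} (more precisely from the proof of $(3)\Rightarrow(5)$ and the remark following it) is that each characteristic permutation $\tau$ extends to a unique linear polynomial $P_\tau(X) = Y_\tau X - Y_\tau b + \tau(b)$ with $P_\tau(x) = \tau(x)$ for all $x \in B$, and this $P_\tau$ does not depend on the choice of $b$. Conversely, by $(5)\Rightarrow(4)$ of that proposition, every $P \in \mathcal L$ with $P(B) = B$ restricts to a characteristic permutation of $B$. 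So as sets the map $\tau \mapsto P_\tau$ is a bijection between $G_B$ and $\{P \in \mathcal L \mid P(B) = B\} = Stab_B^\alpha$.

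For part (1), first I would check that $Stab_B^\alpha = \{P \in \mathcal L \mid P(B) = B\}$ is visibly a subgroup of $\mathcal L$ (closed under composition since $\mathcal L$ is a group under composition and the condition $P(B)=B$ is preserved by composition and inversion). Then I would show $\tau \mapsto P_\tau$ is a group homomorphism: given $\tau, \tau' \in G_B$, the composite $P_\tau \circ P_{\tau'}$ is a linear polynomial which on each $x \in B$ takes the value $P_\tau(P_{\tau'}(x)) = P_\tau(\tau'(x)) = \tau(\tau'(x)) = (\tau\tau')(x)$; since two linear polynomials agreeing on $n \ge 3$ points coincide, $P_\tau \circ P_{\tau'} = P_{\tau\tau'}$, which incidentally also re-proves that $\tau\tau' \in G_B$ and hence that $G_B$ is a subgroup of $Sym(B)$. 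Injectivity of $\tau \mapsto P_\tau$ is immediate since $P_\tau$ determines $\tau$ by restriction to $B$; surjectivity onto $Stab_B^\alpha$ is the bijection noted above. This gives that $G_B$ is a group, $H_B = Stab_B^\alpha$, and $\tau\mapsto P_\tau$ is a group isomorphism.

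For part (2), the value $Y_\tau$ is precisely the degree-$1$ coefficient of $P_\tau$. The map $P \mapsto (\text{leading coefficient of } P)$ is a group homomorphism $\mathcal L \to k\setminus\{0\}$ (the leading coefficient of $cX+c'$ composed with $dX+d'$ is $cd$), so composing with the isomorphism $G_B \cong H_B \le \mathcal L$ from part (1) gives that $Y : \tau \mapsto Y_\tau$ is a group homomorphism $G_B \to k\setminus\{0\}$. For injectivity: if $Y_\tau = 1$ then $P_\tau$ has leading coefficient $1$, but $P_\tau$ lies in the finite group $H_B$ (of order at most $n!$ by Remark \ref{fewpols}), so $P_\tau$ has finite order; a linear polynomial $X + c'$ with $c' \ne 0$ satisfies $P_\tau^j(X) = X + jc'$ by \eqref{2012202218}, which in characteristic $0$ is never the identity, forcing $c' = 0$ and $P_\tau = X$, i.e. $\tau = id$. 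Hence $\ker Y$ is trivial and $Y$ is injective.

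I expect no serious obstacle here; the one point requiring a little care is the injectivity of $Y$ in part (2), where one genuinely uses that $\mathrm{char}(k) = 0$ (so that the translations of infinite order cannot sit inside a finite stabiliser) — this is the only place the characteristic hypothesis enters, and it is exactly \eqref{2012202218} that makes it work.
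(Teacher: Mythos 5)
Your proposal is correct and follows essentially the same route as the paper: both rest on \eqref{equpolynbb}, the uniqueness of a linear polynomial through the points of $B$, the identification $H_B=Stab_B^{\alpha}$ via $(5)\Leftrightarrow(4)$ of proposition \ref{propexistsforall}, and the characteristic-$0$ argument ($P_{\tau}^j(X)=X+jc'$ is never the identity for $c'\ne 0$) for the injectivity of $Y$. The only cosmetic difference is that you derive closure of $G_B$ from the polynomial side (note that you implicitly use finiteness of $Sym(B)$ to dispense with inverses), whereas the paper verifies the subgroup axioms directly through the multiplicativity $Y_{\tau\tau'}=Y_{\tau}Y_{\tau'}$ and $Y_{\tau^{-1}}=Y_{\tau}^{-1}$ of the difference quotients.
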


\begin{proof}
(1)
\begin{itemize}
\item The identity $id$ is a characteristic permutation, since $Y_{id}(b,b')=1$ for $b\ne b'$.
\item Let $\tau ,\tau'\in G_B$.
If $b\ne b'$, then $Y_{\tau\tau'}(b,b')= \frac{\tau\tau'(b)-\tau\tau'(b')}{\tau'(b)-\tau'(b')} \frac{\tau'(b)-\tau'(b')}{b-b'} =Y_{\tau'}Y_{\tau }$, so $\tau\tau'\in G_B$.
\item Let $\tau\in G_B$.
If $b\ne b'$, then $Y_{\tau^{-1}}(b,b')= \frac{\tau^{-1}(b)-\tau^{-1}(b')}{\tau\tau^{-1}(b)-\tau\tau^{-1}(b')} =Y_{\tau }^{-1}$, so $\tau^{-1}\in G_B$.
\end{itemize}

The fact that every $P_{\tau }$ is in the stabiliser of $B$ follows immediately from \eqref{equpolynbb}, while the fact that every element in the stabiliser of $B$ is of the form $P_{\tau }$ for some $\tau\in G_B$ is argued as in the proof of $(5)\Rightarrow (4)$ of proposition \ref{propexistsforall}.
The fact that $\tau\mapsto P_{\tau }$ is a morphism follows from \eqref{equpolynbb}, since
\[
\forall x\in B,P_{\tau\tau'}(x)=\tau\tau'(x)=P_{\tau }P_{\tau'}(x)
\]
whence $P_{\tau\tau'}=P_{\tau }P_{\tau'}$.
Again from \eqref{equpolynbb}, if $P_{\tau }(X)=X$, than $\tau $ is the identity, so $\tau\mapsto P_{\tau }$ is an isomorphism.

(2) By the first part of the proof of (1), function $Y$ is a morphism.
To see that it is injective, let $\tau\in G_B$ with $Y_{\tau }=1$.
Then the value $\tau (b)-b$ is constant over $B$, say $\tau (b)=b+c$ for some $c\in k$.
Therefore $\tau^r(b)=b+rc$ for every $r$.
Let $m$ be the order of $\tau $.
It follows that $b=\tau^m(b)=b+mc$, whence $c=0$, so that $\tau $ is the identity.
\end{proof}

Therefore $B$ is an exceptional set if and only if $G_B$ is not the trivial subgroup of $Sym(B)$.

\begin{remark} \label{sensstab}
Given characteristic planes $\pi ,\pi'$ of $B$, their stabilisers $Stab_{\pi }^{\beta },Stab_{\pi'}^{\beta }$ are conjugate to each other, hence they are isomorphic.

Note that, by the proof of proposition \ref{propexistsforall}, $\tau\in Sym(B)$ is a characteristic permutation of $B$ if and only if any (equivalently, every) permutation $\sigma $ associated to $\tau $ by \eqref{tausigma} is in the stabiliser of some characteristic plane of $B$.
Therefore facts on characteristic permutations give rise to corresponding results on permutations stabilising a characteristic plane.
For instance, the polynomial class $[B]$ is an exceptional class if and only if for some (equivalently, all) $\pi\in \chpl_B$ the stabiliser $Stab_{\pi }^{\beta }$ is not the trivial subgroup of $Sym_n$; in fact, $\chi (B)= \frac{n!}{ \card (Stab_{\pi }^{\beta })} = \frac{n!}{ \card (G_B)} = \frac{n!}{ \card (H_B)} $.
\end{remark}

\begin{lemma} \label{notwo}
If $\tau\in G_B$ and there are $b,b'$ with $b\ne b'$ such that $\tau (b)=b,\tau (b')=b'$, then $\tau $ is the identity.
\end{lemma}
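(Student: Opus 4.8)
The statement to prove is Lemma \ref{notwo}: if $\tau\in G_B$ fixes two distinct points $b,b'\in B$, then $\tau$ is the identity. The natural approach is to use the description of characteristic permutations via the linear polynomial $P_\tau$ of Proposition \ref{charactzero}. Since $\tau\in G_B$, we have $P_\tau\in H_B$ with $P_\tau(x)=\tau(x)$ for all $x\in B$, and $P_\tau(X)=Y_\tau X-Y_\tau b_0+\tau(b_0)$ is linear, where $Y_\tau$ is the (constant) value of the difference quotients.

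The key observation is that a linear polynomial fixing two distinct points must be the identity polynomial. First I would note that, by hypothesis, $P_\tau(b)=\tau(b)=b$ and $P_\tau(b')=\tau(b')=b'$, so the linear polynomial $P_\tau(X)-X$ vanishes at the two distinct points $b,b'$. A polynomial of degree at most $1$ with two distinct roots is the zero polynomial, hence $P_\tau(X)=X$. By the isomorphism $\tau\mapsto P_\tau$ of Proposition \ref{charactzero}(1) (or directly from $\forall x\in B,\ P_\tau(x)=\tau(x)$), it follows that $\tau$ is the identity permutation of $B$.

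Alternatively, and even more elementarily, one can argue purely with the difference quotients: from $\tau(b)=b$ and $\tau(b')=b'$ we get $Y_\tau(b,b')=\frac{\tau(b)-\tau(b')}{b-b'}=\frac{b-b'}{b-b'}=1$, so $Y_\tau=1$; then Proposition \ref{charactzero}(2) says the morphism $Y$ is injective, so $\tau$ is the identity. I would probably present this second version since it is the shortest and invokes exactly the injectivity statement just proved.

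There is no real obstacle here — the lemma is an immediate consequence of Proposition \ref{charactzero} together with the trivial fact that a degree-$\le 1$ polynomial vanishing at two points is zero; the only thing to be careful about is to phrase it so that one legitimately concludes $Y_\tau=1$ (which needs $b\ne b'$, as given) before quoting injectivity of $Y$.
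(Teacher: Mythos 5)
Your proposal is correct, and the second argument you say you would present --- computing $Y_{\tau}(b,b')=\frac{b-b'}{b-b'}=1$ and then invoking the injectivity of $Y$ from proposition \ref{charactzero}(2) --- is exactly the paper's proof. The alternative via $P_{\tau}$ is a fine equivalent reformulation but adds nothing needed here.
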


\begin{proof}
By the assumption, $Y_{\tau }= \frac{\tau (b)-\tau (b')}{b-b'} =1$.
Now apply proposition \ref{charactzero}(2).
\end{proof}

Every non-trivial finite group has an element whose order is prime.
Therefore, to identify exceptional sets it is enough to consider permutations having as order a prime number.

\begin{corollary} \label{multofprime}
The set $B\in \mathcal P_n$ is an exceptional set if and only if there exists a characteristic permutation $\tau $ of $B$ such that
\[
\tau  =c_1\cdot\ldots\cdot c_s
\]
where $c_1,\ldots ,c_s$ are disjoint $p$-cycles for some prime number $p$, and either $n=sp$ or $n=sp+1$.
\end{corollary}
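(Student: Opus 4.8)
The plan is to derive this from Proposition~\ref{propexistsforall} and Proposition~\ref{charactzero} by passing to a well-chosen power of an arbitrary non-identity characteristic permutation. First I would observe that one direction is trivial: if such a $\tau$ exists, then $\tau$ is a non-identity characteristic permutation of $B$, so $G_B$ is non-trivial and $B$ is exceptional by the remark following Proposition~\ref{charactzero}. For the forward direction, suppose $B$ is exceptional, so $G_B$ is a non-trivial finite group (it is finite since it embeds in $Sym(B)$). By Cauchy's theorem $G_B$ has an element $\tau_0$ of prime order $p$ for some prime $p$. Since $G_B$ is a subgroup of $Sym(B)\cong Sym_n$ by Proposition~\ref{charactzero}(1), $\tau_0$ is in particular a permutation of the $n$-element set $B$ of order $p$, and $\tau_0$ is a characteristic permutation. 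So it remains to analyse the cycle structure of a permutation of order $p$ that lies in $G_B$.

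The key step is the cycle-structure analysis. A permutation of order $p$ (with $p$ prime) decomposes into disjoint cycles each of length $1$ or $p$; write $\tau_0 = c_1\cdots c_s$ for its nontrivial cycles (the $p$-cycles), and let $F$ be the set of fixed points, so $n = sp + \card(F)$. Here I invoke Lemma~\ref{notwo}: a characteristic permutation cannot fix two distinct elements of $B$, so $\card(F)\le 1$. Since $\tau_0$ is not the identity we have $s\ge 1$, and $n = sp$ if $\card(F)=0$, while $n = sp+1$ if $\card(F)=1$. This yields the stated dichotomy $n=sp$ or $n=sp+1$ with $\tau=\tau_0$.

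I do not expect any serious obstacle here; the statement is essentially a packaging of Cauchy's theorem together with Lemma~\ref{notwo}. The only point requiring a little care is that the element of prime order must be extracted \emph{inside} $G_B$ (so that it remains a characteristic permutation), rather than from $Sym(B)$ at large — but this is immediate since $G_B$ is itself a non-trivial finite group. One should also note explicitly that $s\ge 1$ so that the product $c_1\cdots c_s$ is genuinely nonempty, which is exactly the condition that $\tau_0\ne \mathrm{id}$, guaranteed by $G_B$ being non-trivial.
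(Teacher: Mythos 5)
Your proposal is correct and follows the paper's own argument: the paper likewise notes that every non-trivial finite group has an element of prime order, takes such a $\tau$ in $G_B$, and uses lemma \ref{notwo} to bound the number of fixed points by one, yielding $n=sp$ or $n=sp+1$. Your write-up merely makes the cycle-structure bookkeeping slightly more explicit.
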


\begin{proof}
Assume $B$ is an exceptional set.
Let $\tau\in G_B$ have order some prime number $p$.
Then $\tau =c_1\cdot\ldots\cdot c_s$ for some disjoint $p$-cycles $c_1,\ldots ,c_s$.
The fact that either $n=sp$ or $n=sp+1$ follows from lemma \ref{notwo}.

The converse is immediate.
\end{proof}

\begin{lemma} \label{lemrroot}
Let $B\in \mathcal P_n$, and let $\tau=c_1\cdot\ldots\cdot c_s\in G_B\setminus\{ id\} $, where $c_1,\ldots ,c_s$ are pairwise disjoint cycles; let $r\ge 2$ be the order of at least one of them.
Then $Y_{\tau }$ is a primitive $r$-th root of unity.
\end{lemma}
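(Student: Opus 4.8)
The plan is to transport the problem to the associated linear polynomial. By Definition \ref{defpitau} and equation \eqref{defpcrit}, $\tau$ determines $P_\tau(X)=\varepsilon X+d$ with $\varepsilon:=Y_\tau$ its leading coefficient and $d$ its constant term, and by \eqref{equpolynbb} we have $P_\tau(x)=\tau(x)$ for all $x\in B$; since $\tau(B)=B$, iterating this identity gives $P_\tau^j(x)=\tau^j(x)$ for all $x\in B$ and all $j\ge 1$. First I would note that $\varepsilon\ne 1$: as $Y_{id}=1$, if $Y_\tau=1$ then injectivity of $Y$ (Proposition \ref{charactzero}(2)) would give $\tau=id$, which is excluded. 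Hence equation \eqref{201220221809} applies, giving $P_\tau^j(X)=\varepsilon^j X+\frac{\varepsilon^j-1}{\varepsilon-1}\,d$ for every $j$.

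Next, fix a cycle $c_i$ among $c_1,\dots,c_s$ whose order is exactly $r$, and let $E\subseteq B$ be its support; since the cycles are pairwise disjoint, $\tau$ agrees with $c_i$ on $E$, and $\card(E)=r\ge 2$. For $x\in E$ we obtain $P_\tau^r(x)=\tau^r(x)=c_i^r(x)=x$, that is $(\varepsilon^r-1)\bigl(x+\tfrac{d}{\varepsilon-1}\bigr)=0$. Were $\varepsilon^r\ne 1$, this would force every element of $E$ to equal $-\tfrac{d}{\varepsilon-1}$, contradicting $\card(E)\ge 2$; hence $\varepsilon^r=1$.

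Finally I would pin down the exact order. Let $r'$ be the multiplicative order of $\varepsilon$, so that $r'\mid r$. For $x\in E$ we get $P_\tau^{r'}(x)=\varepsilon^{r'}x+\frac{\varepsilon^{r'}-1}{\varepsilon-1}\,d=x$, hence $c_i^{r'}(x)=\tau^{r'}(x)=x$ for every $x\in E$; so the $r$-cycle $c_i$ satisfies $c_i^{r'}=id$, which forces $r\mid r'$. Therefore $r=r'$, i.e.\ $Y_\tau=\varepsilon$ is a primitive $r$-th root of unity. The computation is nothing more than the analysis of the affine recurrence $x\mapsto\varepsilon x+d$ along the finite $c_i$-orbit $E$; the only points requiring a little care are the passage from $P_\tau(x)=\tau(x)$ on $B$ to the equality of iterates, and the use of $\card(E)\ge 2$ to discard the case $\varepsilon^r\ne 1$. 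I do not expect a genuine obstacle beyond this bookkeeping.
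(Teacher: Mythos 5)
Your proof is correct and follows essentially the same route as the paper's: both transport the problem to the linear polynomial $P_{\tau}$ and analyse its iterates via \eqref{2012202218}--\eqref{201220221809} on the support of an $r$-cycle. The only cosmetic differences are that you obtain $Y_{\tau}\ne 1$ by citing the injectivity of $Y$ from proposition \ref{charactzero}(2) rather than re-deriving it from the constant term of $P_{\tau}^r$, and you phrase the step $Y_{\tau}^r=1$ as a fixed-point count instead of first concluding that $P_{\tau}^r$ is the identity polynomial.
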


\begin{proof}
Let $(b_1\ \ldots\ b_r)$ be a $r$-cycle of $\tau $.
Let $P_{\tau }(X)=Y_{\tau }X-Y_{\tau }b_1+\tau (b_1)$ be defined as in \eqref{defpcrit}, choosing $b_1$ as $b$.
The restriction of $P_{\tau }^r$ to $\{ b_1,\ldots ,b_r\} $ is the identity so, since $r\ge 2$, the polynomial $P_{\tau }^r$ is the identity and, by \eqref{2012202218}, $Y_{\tau }^r=1$; moreover
\begin{equation} \label{201220222226}
(Y_{\tau }^{r-1}+\ldots +Y_{\tau }+1)(-Y_{\tau }b_1+\tau (b_1))=0.
\end{equation}
If $Y_{\tau }=1$, from \eqref{201220222226} it would follow $\tau (b_1)=b_1$, which is false.
Therefore $Y_{\tau }\ne 1$, and since $P_{\tau }^j$ is not the identity for any $j<r$, from \eqref{201220221809} it follows that $Y_{\tau }^j\ne 1$.
\end{proof}

\begin{corollary} \label{cormnd}
If $\tau $ is a non-identity characteristic permutation of $B\in \mathcal P_n$, then $\tau =c_1\cdot\ldots\cdot c_s$ is the product of the disjoint cycles $c_1,\ldots ,c_s$, where all such cycles have the same order, say $r$, and either $n=sr$ or $n=sr+1$.
\end{corollary}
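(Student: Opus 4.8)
The plan is to read the statement off from Lemmas \ref{lemrroot} and \ref{notwo}, which together already contain everything needed. First I would fix the disjoint cycle decomposition $\tau = c_1 \cdots c_s$, where $c_1, \ldots, c_s$ are the cycles of $\tau$ of length $\ge 2$ (fixed points of $\tau$ are not listed among them). Since $\tau \ne id$, we have $s \ge 1$, and each $c_i$ has some order $r_i \ge 2$.

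The next step is to prove that the $r_i$ are all equal. For a fixed index $i$, Lemma \ref{lemrroot} applied with $r = r_i$ says that $Y_{\tau}$ is a primitive $r_i$-th root of unity; equivalently, the multiplicative order of $Y_{\tau}$ in $k \setminus \{0\}$ is exactly $r_i$. As the order of the fixed element $Y_{\tau}$ is a single well-defined integer, this forces $r_i = r_j$ for all $i,j$; write $r$ for this common value, so $r \ge 2$. Hence $\tau$ is a product of $s$ disjoint $r$-cycles, possibly together with some fixed points.

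Then I would count the fixed points. By Lemma \ref{notwo}, a non-identity characteristic permutation of $B$ cannot fix two distinct elements of $B$, so $\tau$ has at most one fixed point. If $\tau$ fixes no point of $B$, then $n = \card(B) = sr$; if it fixes exactly one point, then $n = sr + 1$. This is precisely the desired conclusion.

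I do not expect a genuine obstacle: the corollary is a direct synthesis of the two preceding lemmas. The only points needing a little care are bookkeeping ones --- excluding the possible fixed point from the list $c_1, \ldots, c_s$ and recording it instead through the ``$+1$'' --- together with the elementary remark that an element of $k \setminus \{0\}$ cannot be simultaneously a primitive root of unity of two distinct orders.
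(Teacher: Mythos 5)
Your proposal is correct and is exactly the paper's argument (the paper's proof is the single line ``By lemmas \ref{notwo} and \ref{lemrroot}''), with the two lemmas combined in the same way: lemma \ref{lemrroot} forces all cycle lengths to equal the multiplicative order of $Y_{\tau}$, and lemma \ref{notwo} bounds the number of fixed points by one. No issues.
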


\begin{proof}
By lemmas \ref{notwo} and \ref{lemrroot}.
\end{proof}

\begin{proposition} \label{corabel}
The group $G_B$ is a cyclic group.
Its order is either a divisor of $n$ or a divisor of $n-1$.
\end{proposition}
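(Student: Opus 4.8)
The plan is to exploit the injective morphism $Y\colon G_B\to k\setminus\{0\}$ furnished by Proposition \ref{charactzero}(2). First I would note that $G_B$ is finite, being a subgroup of $Sym(B)$, so its image $Y(G_B)$ is a finite subgroup of the multiplicative group $k\setminus\{0\}$ of the field $k$. Invoking the classical fact that every finite subgroup of the multiplicative group of a field is cyclic, and using that $Y$ is injective so that $G_B\cong Y(G_B)$, we conclude that $G_B$ is cyclic.

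For the assertion on the order, set $m=\card(G_B)$. If $G_B$ is trivial then $m=1$, which divides both $n$ and $n-1$, so assume $G_B\ne\{id\}$ and fix a generator $\tau$ of $G_B$; then $\tau$ is a genuine non-identity characteristic permutation of $B$. By Corollary \ref{cormnd}, $\tau=c_1\cdot\ldots\cdot c_s$ is a product of pairwise disjoint cycles all of a common order $r\ge 2$, with $s\ge 1$ (since $\tau\ne id$) and either $n=sr$ or $n=sr+1$. The order of $\tau$ as a permutation is the least common multiple of its cycle lengths, hence equals $r$; on the other hand, since $\tau$ generates the cyclic group $G_B$, this order equals $m$. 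Therefore $m=r$, and so $n=sm$ or $n=sm+1$, i.e. $m$ divides $n$ or $m$ divides $n-1$.

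I do not expect any substantial obstacle: both halves follow almost immediately from results already established (Proposition \ref{charactzero}(2) and Corollary \ref{cormnd}) together with the standard structure theorem for finite multiplicative subgroups of a field. The only point that deserves a moment's care is the identification of the order of $\tau$ as an element of $G_B$ with the order of $\tau$ viewed as a product of equal-length cycles, and the remark that a generator of a non-trivial cyclic group is indeed a non-identity permutation, so that Corollary \ref{cormnd} is applicable.
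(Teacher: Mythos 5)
Your proof is correct and follows essentially the same route as the paper: cyclicity via the injective morphism $Y$ of Proposition \ref{charactzero}(2) into $k\setminus\{0\}$ together with the structure of finite multiplicative subgroups of a field, and the divisibility claim by applying Corollary \ref{cormnd} to a generator and identifying its order with the common cycle length. Your explicit treatment of the trivial case and of the order identification merely spells out steps the paper leaves implicit.
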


\begin{proof}
By proposition \ref{charactzero}(2) and lemma \ref{lemrroot}, $G_B$ is isomorphic to a finite subgroup of the group of roots of unity; in particular $G_B$ is cyclic.
Let $r$ be its order, and let $\tau\in G_B$ have order $r$.
By corollary \ref{cormnd}, $\tau $ is a product of disjoint $r$-cycles, so the conclusion follows by applying corollary \ref{cormnd} again.
\end{proof}

We now obtain a characterisation of the exceptional sets.

\begin{definition}
Let $r\ge 2$.
A \emph{regular star} $r$-\emph{gon} is a sequence $(z_1,\ldots ,z_r)$ of distinct elements of $k$, called \emph{vertices}, such that there exists $P\in \mathcal L $ with the property:
\begin{equation} \label{stardef}
P(z_1)=z_2,\quad P(z_2)=z_3,\quad\ldots ,\quad P(z_{r-1})=z_r,\quad P(z_r)=z_1.
\end{equation}
A \emph{regular} $r$-\emph{gon} is the set of vertices of a regular star $r$-gon.
\end{definition}

So, if $k= \C $, regular star $r$-gons and regular $r$-gons are the usual geometric concepts in the Gauss plane.

Note also that if $P$ is as in \eqref{stardef} then $P^r(X)=X$, where $P^r$ denoted the composition of $P$ with itself $r$ times.

\begin{lemma} \label{30112022}
Let $(z_1,\ldots ,z_r)$ be a regular star $r$-gon, and let $P$ be as in \eqref{stardef}.
Then $P(X)=\varepsilon X+c$ where:
\begin{itemize}
\item $\varepsilon $ is a primitive $r$-th root of unity;
\item the fixed point of $P$, namely $ \frac c{1-\varepsilon }$, is the barycentre $b$ of the regular star $r$-gon.
\end{itemize}
\end{lemma}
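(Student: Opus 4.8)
The plan is to unwind the definition of a regular star $r$-gon and use the iteration formulas \eqref{2012202218} and \eqref{201220221809} already recorded in the paper. Write $P(X) = \varepsilon X + c$; since $P$ is linear we have $\varepsilon \neq 0$. First I would observe that the hypothesis \eqref{stardef} forces $P^r(z_1) = z_1$, so $P^r$ is a linear polynomial fixing a point; but more is true: cycling through \eqref{stardef} shows $P^r(z_j) = z_j$ for every $j$, hence $P^r$ fixes $r \geq 2$ distinct points, so $P^r(X) = X$ identically. Using \eqref{2012202218}, the leading coefficient of $P^r$ is $\varepsilon^r$, so $\varepsilon^r = 1$, i.e. $\varepsilon$ is an $r$-th root of unity.

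Next I would rule out $\varepsilon$ being a non-primitive root, i.e. show $\varepsilon^j \neq 1$ for $0 < j < r$. Suppose $\varepsilon^j = 1$ for some minimal such $j$; then $j \mid r$. If $\varepsilon = 1$ then $P(X) = X + c$ and \eqref{2012202218} gives $P^r(X) = X + rc$, so $P^r = X$ forces $rc = 0$, hence $c = 0$ (as $k$ has characteristic $0$), making $P$ the identity — but then $z_1 = P(z_1) = z_2$, contradicting that the vertices are distinct. So $\varepsilon \neq 1$, and by \eqref{201220221809} we have $P^j(X) = \varepsilon^j X + \frac{\varepsilon^j - 1}{\varepsilon - 1} c = X$ whenever $\varepsilon^j = 1$. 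But $P^j$ being the identity would mean, reading off \eqref{stardef} $j$ times from $z_1$, that $z_{1} = z_{j+1}$ (indices mod $r$), contradicting distinctness of the vertices when $0 < j < r$. Hence no such $j$ exists and $\varepsilon$ is a primitive $r$-th root of unity.

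For the second bullet, since $\varepsilon \neq 1$ the map $P$ has a unique fixed point $x_0$, namely the solution of $\varepsilon x_0 + c = x_0$, i.e. $x_0 = \frac{c}{1 - \varepsilon}$. It remains to identify $x_0$ with the barycentre $b = \frac{1}{r}\sum_{j=1}^r z_j$. The clean way is to note $z_{j+1} = P(z_j) = \varepsilon z_j + c$ for all $j$ (indices mod $r$), so summing over $j$ gives $\sum_j z_j = \varepsilon \sum_j z_j + rc$, whence $(1 - \varepsilon)\sum_j z_j = rc$, i.e. $\frac{1}{r}\sum_j z_j = \frac{c}{1-\varepsilon} = x_0$. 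Thus the fixed point equals the barycentre, as claimed.

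I do not expect a genuine obstacle here; the only points requiring a little care are (i) upgrading ``$P^r$ fixes one vertex'' to ``$P^r = \mathrm{id}$'' (which needs that a linear polynomial fixing two points is the identity, exactly the content used in lemma \ref{notwo} and lemma \ref{lemrroot}), and (ii) the characteristic-$0$ step eliminating the translation case $\varepsilon = 1$. Both are routine given the machinery already in the paper; indeed this lemma is essentially lemma \ref{lemrroot} repackaged geometrically, with the barycentre computation added.
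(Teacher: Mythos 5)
Your proof is correct and follows essentially the same route as the paper's: deduce $\varepsilon^r=1$ from $P^r=\mathrm{id}$ via \eqref{2012202218}, exclude $\varepsilon=1$ and non-primitive roots using \eqref{201220221809} and the distinctness of the vertices, then identify the fixed point $\frac{c}{1-\varepsilon}$ with the barycentre. The only (harmless) variations are that you spell out why $P^r=\mathrm{id}$ (which the paper asserts in the remark preceding the lemma) and that you obtain the barycentre by summing the cyclic recurrence $z_{j+1}=\varepsilon z_j+c$ rather than by expanding $\frac1r\sum_j P^j(z)$ as a geometric sum.
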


\begin{proof}
Using \eqref{2012202218}, from $P^r(X)=X$ it follows that $\varepsilon $ is a $r$-th root of unity.
Moreover $\varepsilon\ne 1$, since otherwise $P^r(z_1)=z_1+rc\ne z_1$.
If there existed $r'<r$ such that $\varepsilon^{r'}=1$ then, by \eqref{201220221809}, $P^{r'}(X)=X+c \frac{\varepsilon^{r'}-1}{\varepsilon -1} =X$, a contradiction with \eqref{stardef}.
Finally,
\begin{multline*}
b= \frac{\sum_{j=0}^{r-1}P^j(z)}r= \frac{\sum_{j=0}^{r-1}\varepsilon^jz+\sum_{j=1}^{r-1}\sum_{\ell =0}^{j-1}\varepsilon^{\ell }c}r
= \frac{ \frac{\varepsilon^r-1}{\varepsilon -1} z+\sum_{j=1}^{r-1} \frac{\varepsilon^j-1}{\varepsilon -1} c}r = \\
= \frac{(\varepsilon^{r-1}+\ldots +\varepsilon +1-r)c}{r(\varepsilon -1)} = \frac{c}{1-\varepsilon } .
\end{multline*}
\end{proof}

Exceptional sets turn out to be the unions of regular $r$-gons with the same barycentre, possibly with the barycentre itself.

\begin{theorem} \label{chrexcsets}
Let $B\in \mathcal P_n$.
Then $B$ is an exceptional set if and only if there are $r\ge 2$ and $s$ such that
\begin{itemize}
\item[(a)] either $sr=n$ and $B=\bigcup_{h=1}^sC_h$ where the sets $C_h$ are parwise disjoint, regular $r$-gons with the same barycentre $b$;
\item[(b)] or $sr+1=n$ and $B=\bigcup_{h=1}^sC_h\cup\{ b\} $ where the sets $C_h$ are pairwise disjoint, regular $r$-gons having the same barycentre $b$.
\end{itemize}
\end{theorem}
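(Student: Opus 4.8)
The plan is to prove the two implications separately, using lemma \ref{30112022} as the dictionary between a cycle of a linear reduction of $B$ and a regular $r$-gon centred at the reduction's fixed point.

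For the implication $(\Rightarrow)$, suppose $B$ is an exceptional set, so $G_B\neq\{id\}$ by the remark following proposition \ref{charactzero}. Choose any $\tau\in G_B\setminus\{id\}$. By corollary \ref{cormnd}, $\tau=c_1\cdots c_s$ is a product of pairwise disjoint cycles, all of a common order $r\geq 2$, with $n=sr$ or $n=sr+1$. Let $P=P_\tau$ be the linear polynomial of \eqref{defpcrit}; by \eqref{equpolynbb} its restriction to $B$ is $\tau$, and by lemma \ref{lemrroot} its leading coefficient $\varepsilon=Y_\tau$ is a primitive $r$-th root of unity, so $P(X)=\varepsilon X+c$ with $\varepsilon\neq 1$ and $P$ has a single fixed point $b=c/(1-\varepsilon)$. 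For each $h$, writing $c_h=(b_1^{(h)}\ \cdots\ b_r^{(h)})$, the relations $P(b_i^{(h)})=\tau(b_i^{(h)})=b_{i+1}^{(h)}$ (indices read cyclically) show that $(b_1^{(h)},\ldots,b_r^{(h)})$ is a regular star $r$-gon witnessed by $P$, so $C_h:=\{b_1^{(h)},\ldots,b_r^{(h)}\}$ is a regular $r$-gon whose barycentre, by lemma \ref{30112022}, is the fixed point $b$ of $P$. Hence the $C_h$ are pairwise disjoint regular $r$-gons with common barycentre $b$, and their union is the support of $\tau$, of cardinality $sr$. If $n=sr$ then $\tau$ is fixed-point-free and $B=\bigcup_h C_h$, which is case (a); if $n=sr+1$ then $\tau$ has a unique fixed point, which (being fixed by $\tau=P|_B$) must be the fixed point $b$ of $P$ and lies outside $\bigcup_h C_h$, so $B=\bigcup_h C_h\cup\{b\}$, which is case (b).

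For the converse $(\Leftarrow)$, assume $B$ is as in (a) or (b). Each $C_h$ is a regular $r$-gon, so there is $P_h(X)=\varepsilon_h X+c_h\in\mathcal L$ cycling its vertices; by lemma \ref{30112022}, $\varepsilon_h$ is a primitive $r$-th root of unity and the fixed point of $P_h$ equals the barycentre of $C_h$, which by hypothesis is $b$, so $P_h(X)-b=\varepsilon_h(X-b)$. Iterating and using that $\{\varepsilon_h^j:0\leq j<r\}$ is the full group $\mu_r$ of $r$-th roots of unity, one obtains $C_h-b=(z-b)\,\mu_r$ for any vertex $z$ of $C_h$, with $z-b\neq 0$ since $C_h$ consists of $r\geq 2$ distinct elements. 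Now fix any primitive $r$-th root of unity $\varepsilon$ and set $P(X)=\varepsilon(X-b)+b$; since $\varepsilon\in\mu_r$, $\varepsilon(C_h-b)=(z-b)\,\varepsilon\mu_r=(z-b)\,\mu_r=C_h-b$, so $P(C_h)=C_h$ for every $h$, and also $P(b)=b$; hence $P(B)=B$ in both cases. Since $P\in\mathcal L\setminus\{X\}$ (because $\varepsilon\neq 1$) stabilises $B$ under the action $\alpha$ of definition \ref{20112022}, the stabiliser of $B$ is nontrivial, so $B$ is an exceptional set by proposition \ref{propexistsforall}.

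The only genuinely delicate point I foresee is the matching of fixed points in the $(\Rightarrow)$ direction, namely that the unique fixed point of the single linear polynomial $P$ is simultaneously the common barycentre of all the $C_h$ and, when $n=sr+1$, the one point of $B$ left uncovered by the cycles, together with the dual observation for $(\Leftarrow)$ that a single primitive root $\varepsilon$ permutes all the $C_h$ even though they may have arisen with different primitive roots $\varepsilon_h$; this works because, after translating the barycentre to the origin, each $C_h$ becomes a full $\mu_r$-orbit and $\mu_r$ is closed under multiplication by each of its elements.
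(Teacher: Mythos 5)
Your proof is correct and follows essentially the same route as the paper: the forward direction is identical (decompose a non-identity $\tau\in G_B$ via corollary \ref{cormnd}, read off the regular star $r$-gons from $P_\tau$ and \eqref{equpolynbb}, and identify the common barycentre with the fixed point of $P_\tau$ via lemma \ref{30112022}), and the converse likewise produces a single nontrivial linear stabiliser of $B$. The only (harmless) difference is cosmetic: in the converse the paper shows each $P_h$ is a power of every $P_{h'}$ by comparing constant terms, whereas you normalise to $P(X)=\varepsilon(X-b)+b$ and use that each $C_h-b$ is a full $\mu_r$-coset; both rest on the same observation about the group of $r$-th roots of unity.
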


\begin{proof}
Assume that (a) or (b) holds.
For every $h$, let $P_h(X)=\varepsilon_hX+c_h$ be the linear polynomial as in \eqref{stardef} for a regular $r$-star formed by the elements of $C_h$.

Now fix any $h$.
By lemma \ref{30112022}, for every $h'$ there exists $\ell $ such that $\varepsilon_h=\varepsilon_{h'}^{\ell }$.
Since $ \frac{c_{h'}}{1-\varepsilon_{h'}} = \frac{c_h}{1-\varepsilon_h} =b$, it follows that $c_h=c_{h'} \frac{1-\varepsilon_{h'}^{\ell }}{1-\varepsilon_{h'}} =c_{h'}(1+\varepsilon_{h'}+\varepsilon_{h'}^2+\ldots +\varepsilon_{h'}^{\ell -1})$.
Therefore
\[
P_h(X)=\varepsilon_{h'}^{\ell }X+c_{h'}(1+\varepsilon_{h'}+\varepsilon_{h'}^2+\ldots +\varepsilon_{h'}^{\ell -1})=P_{h'}^{\ell }(X).
\]
Since $P_{h'}$ sends $C_{h'}$ onto itself, the same is true for $P_h$.
As this holds for every $h'$, it follows that $P_h(B)=B$, so the stabiliser of $B$ under the action $\alpha $ is not trivial.

Conversely assume that $B$ is an exceptional set, let $\tau $ be a non-identity characteristic permutation of $B$, and let $P_{\tau }$ be defined by \eqref{defpcrit}.
Let $r$ be the order of $\tau $.
Then $\tau =c_1\cdot\ldots\cdot c_s$ with $sr=n$ or $sr+1=n$ by corollary \ref{cormnd}, where the cycles $c_j$ are pairwise disjoint and their elements form regular $r$-stars, as witnessed by the polynomial $P_{\tau }$ by \eqref{equpolynbb}.
Therefore their barycentres must coincide with the unique fixed point of $P_{\tau }$, which exists since the coefficient $\varepsilon $ in \eqref{defpcrit} is different from $1$ as $\tau $ is not the identity.
\end{proof}

\begin{remark} \label{301120221927}
Theorem \ref{chrexcsets} tells that an exceptional set $B\in \mathcal P_n$ can identified by the choice of a number $s$ as in (a) or (b) of the theorem and the choice of $s+1$ points, namely one point from each regular $r$-gon and the common barycentre.
In turn, by lemma \ref{30112022}, the barycentre can be recovered by knowing the second vertex of one of the regular star $r$-gons and the leading coefficient (which is one of the finitely many primitive $r$-th roots of unity) of the polynomial $P$ of \eqref{stardef}.

So $B$ is identified by $s+1$ parameters.
This idea will be exploited in the computation of the dimension of the set of exceptional classes.
\end{remark}

The following is a converse of proposition \ref{corabel}.

\begin{corollary} \label{orderexcp}
If $r$ is a divisor of $n$ or of $n-1$, then there exists $B\in \mathcal P_n$ such that $G_B$ has order $r$.
\end{corollary}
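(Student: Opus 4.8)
The plan is to use Theorem~\ref{chrexcsets} as a construction recipe. For $r\ge 2$ I would realise the required $B$ as a union of $s$ concentric regular $r$-gons, adding their common barycentre when $r\mid n-1$, with the radii chosen so that the only affine self-maps of $B$ are the $r$ rotations already built in. The case $r=1$ is separate and easy: it only asks for a non-exceptional $B\in\mathcal P_n$ (we are in the setting $n\ge 3$), and by \eqref{crucialrel} the set of $(\lambda_3,\dots,\lambda_n)\in D(S_n)$ for which $\{0,1,\lambda_3,\dots,\lambda_n\}$ is exceptional is a finite union of proper closed subsets --- for each non-identity $\sigma$ the corresponding system cannot hold identically, by a degree count against the fixed values $\lambda_1=0,\lambda_2=1$ --- hence a proper closed subset of the irreducible variety $D(S_n)$, and any point outside it yields a non-exceptional set. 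So from now on $r\ge 2$.

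For the construction, write $n=sr$ when $r\mid n$ and $n=sr+1$ when $r\mid n-1$, with $s\ge 1$ a positive integer; fix a primitive $r$-th root of unity $\varepsilon\in k$ and put $P(X)=\varepsilon X$. Given $s$ distinct nonzero scalars $w_1,\dots,w_s$, set $C_h=\{z\in k\mid z^r=w_h\}$. Each $C_h$ has $r$ elements and, being the $z\mapsto\varepsilon z$-orbit of any one of its points, is (the vertex set of) a regular star $r$-gon; by Lemma~\ref{30112022} its barycentre is $0$. The $C_h$ are pairwise disjoint since the $w_h$ are distinct, so $B:=\bigcup_{h=1}^sC_h$, respectively $B:=\{0\}\cup\bigcup_{h=1}^sC_h$, has exactly $n$ elements and lies in $\mathcal P_n$; by Theorem~\ref{chrexcsets} it is an exceptional set. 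The real work is to show $\card(G_B)=r$ for a suitable choice of the $w_h$.

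To do this I would compute the stabiliser $H_B=\{Q\in\mathcal L\mid Q(B)=B\}$, which is isomorphic to $G_B$ by Proposition~\ref{charactzero}(1). A linear $Q(X)=\delta X+c$ with $Q(B)=B$ must carry the barycentre of $B$ to the barycentre of $B$; since that barycentre is $0$, this forces $c=0$. In both cases $B\setminus\{0\}=\{z\ne 0\mid z^r\in W\}$ with $W=\{w_1,\dots,w_s\}$, and as $\delta$ fixes $0$ one checks $\delta B=B$ if and only if $\delta^rW=W$. Thus $\{\delta\in k\setminus\{0\}\mid\delta B=B\}$ is the preimage of the multiplicative stabiliser of $W$ under $\delta\mapsto\delta^r$; since this map is surjective on $k\setminus\{0\}$ with kernel the group of $r$-th roots of unity, which has $r$ elements, $\card(G_B)=r\cdot\card\{\delta\mid\delta W=W\}$.

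Everything therefore reduces to choosing $W$ multiplicatively rigid, i.e.\ admitting no nontrivial common multiplier, and this is the only point where care is genuinely needed, since a naive choice of radii can introduce unwanted dilation symmetries. For $s=1$ any single nonzero $w_1$ works; for $s\ge 2$ I would take $W=\{1,2,4,\dots,2^{s-1}\}$ (distinct nonzero elements of $\Q\subseteq k$) and note that $\delta W=W$ forces $\delta=2^j$ for some $0\le j\le s-1$ and then $2^{j+s-1}=\delta\cdot 2^{s-1}\in W$, so $j=0$ and $\delta=1$. With this $W$ we get $\card(G_B)=r$, as desired. I do not anticipate any further obstacle: the remaining points (that each $C_h$ is an honest regular $r$-gon, that $\card(B)=n$, and that affine maps preserve barycentres) are all routine.
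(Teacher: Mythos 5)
Your proposal is correct and follows essentially the same route as the paper: both construct $B$ as a union of $s$ concentric regular $r$-gons with barycentre $0$ (plus the barycentre itself when $r\mid n-1$), chosen with multiplicatively rigid ``radii'' so that no extra symmetries appear --- the paper takes starting vertices $1,\dots,s$, you take the $r$-th roots of $1,2,\dots,2^{s-1}$. Your verification that $\card (G_B)=r$, via the linear stabiliser $H_B$, barycentre preservation, and the $r$-th power map, is a more explicit version of the paper's cycle-counting argument, and your separate treatment of the trivial case $r=1$ (left implicit in the paper) is a sound addition.
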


\begin{proof}
Assume first that $r$ is a divisor of $n$, and let $s$ be such that $sr=n$.
Let $B=\bigcup_{h=1}^sC_h$ where:
\begin{itemize}
\item[(i)] each $C_h$ is a regular $r$-gon;
\item[(ii)] all $C_h$ have $0$ as barycentre;
\item[(iii)] $h\in C_h$ for every $h$.
\end{itemize}
Therefore $C_h=\{ h,\varepsilon h,\varepsilon^2h,\ldots ,\varepsilon^{r-1}h\} $, where $\varepsilon $ is a primitive $r$-th root of unity.

By theorem \ref{chrexcsets}(a), $B$ is an exceptional set.
Let $\tau $ be a generator of $G_B$.
Then the elements in each cycle of $\tau $ are contained in some $C_h$, so the order of $\tau $ is a divisor of $r$; moreover, if $P_h$ is as in the proof of theorem \ref{chrexcsets}, the restriction of $P_h$ to ${C_h}$ has order $r$ and is a power of $\tau |_{C_h}$, so the order of $\tau $ cannot be less than $r$.

A similar argument works if $r$ is a divisor of $n-1$, including in the set $B$ the common barycentre $0$ of the regular $r$-gons.
\end{proof}

Using the isomorphisms \eqref{tausigma}, from the proof of corollary \ref{orderexcp} one obtains also the following.

\begin{corollary} \label{prescribedform}
The set of $\sigma\in Sym_n$ associated via \eqref{tausigma} to some characteristic permutation are all permutations of $\{ 1,\ldots ,n\} $ of the form $\sigma =c_1\cdot\ldots\cdot c_s$ for some disjoint cycles $c_1,\ldots ,c_s$ all of the same length $r$, with $n=sr$ or $n=sr+1$.
\end{corollary}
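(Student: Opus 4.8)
The statement is the equality of two subsets of $Sym_n$: write $\mathcal E_n$ for the set of permutations that, for some enumeration $B=\{b_1,\dots,b_n\}$ of some $B\in\mathcal P_n$, are associated via \eqref{tausigma} to a characteristic permutation of $B$, and $\mathcal C_n$ for the set of permutations of the form $c_1\cdot\ldots\cdot c_s$ with the $c_i$ pairwise disjoint cycles of a common length $r$ and $n\in\{sr,sr+1\}$ (the identity being included, with $r=1$ or as the limiting case of corollary \ref{cormnd}). The plan is to prove the two inclusions separately, the first being an immediate translation of corollary \ref{cormnd} and the second being an existence construction based on the proof of corollary \ref{orderexcp}.

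For $\mathcal E_n\subseteq\mathcal C_n$: fix $\sigma\in\mathcal E_n$, say $\sigma$ is associated to the characteristic permutation $\tau$ of $B=\{b_1,\dots,b_n\}$. The assignment \eqref{tausigma} is the group isomorphism $Sym(B)\to Sym_n$ of \eqref{06062022833} induced by the bijection $j\mapsto b_j$, so it preserves cycle type; hence $\sigma$ has the same cycle type as $\tau$. If $\tau$ is the identity there is nothing to prove; otherwise corollary \ref{cormnd} gives $\tau=c_1\cdot\ldots\cdot c_s$, a product of disjoint cycles of a common length $r$ with $n\in\{sr,sr+1\}$, and the same cycle structure then holds for $\sigma$, i.e.\ $\sigma\in\mathcal C_n$.

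For $\mathcal C_n\subseteq\mathcal E_n$: let $\sigma=c_1\cdot\ldots\cdot c_s\in\mathcal C_n$, with $c_1,\dots,c_s$ pairwise disjoint $r$-cycles and $n\in\{sr,sr+1\}$. If $r=1$ then $\sigma$ is the identity, which is associated to the (characteristic) identity permutation of any $B\in\mathcal P_n$, so assume $r\ge 2$; then $r$ divides $n$ or $n-1$. Take the exceptional set $B$ built in the proof of corollary \ref{orderexcp}: $B=\bigcup_{h=1}^s C_h$ with $C_h=\{h,\varepsilon h,\dots,\varepsilon^{r-1}h\}$ for a fixed primitive $r$-th root of unity $\varepsilon$, together with the common barycentre $0$ when $n=sr+1$. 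The polynomial $P(X)=\varepsilon X$ satisfies $P(B)=B$, so $P\in H_B$, and the corresponding $\tau\in G_B$ acts by $\tau(x)=\varepsilon x$; it therefore permutes each $C_h$ as a single $r$-cycle and fixes $0$ when present, so $\tau$ is a product of exactly $s$ disjoint $r$-cycles (plus one fixed point if $n=sr+1$) and has the same cycle type as $\sigma$. Now choose a bijection $\phi\colon\{1,\dots,n\}\to B$ with $\phi\circ\sigma=\tau\circ\phi$ — concretely, list the elements of the $h$-th cycle of $\tau$ against those of $c_h$ for each $h$, and match the remaining fixed points if any — and use $\phi$ as the enumeration, $b_j:=\phi(j)$. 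Then \eqref{tausigma} reads $\tau(\phi(j))=\phi(\sigma(j))$, so the permutation associated to $\tau$ under this enumeration is exactly $\phi^{-1}\tau\phi=\sigma$; hence $\sigma\in\mathcal E_n$.

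The only step that is not purely formal is the last one: corollary \ref{orderexcp} only produces a set $B$ whose characteristic permutations have the prescribed cycle \emph{type}, so a priori $\sigma$ is recovered only up to conjugacy, and the point is that the freedom to choose the enumeration in \eqref{tausigma} is precisely what lets one realise $\sigma$ itself rather than merely a conjugate. Besides this, one must remember to dispose of the degenerate identity/$r=1$ case, which is trivial.
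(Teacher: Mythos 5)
Your proof is correct and follows the route the paper intends: the forward inclusion is corollary \ref{cormnd} transported through the cycle-type-preserving isomorphism \eqref{06062022833}, and the reverse inclusion reuses the concentric $r$-gon construction from the proof of corollary \ref{orderexcp}, with the freedom in choosing the enumeration used to realise $\sigma$ itself rather than a mere conjugate. Your explicit handling of that last conjugacy point, and of the degenerate identity case, only makes precise what the paper leaves implicit.
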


\begin{corollary} \label{hbgeom}
The set of all characteristic numbers of sets in $ \mathcal P_n$ is the set of all numbers of the form $ \frac{n!}r $ where $r$ ranges over the set of divisors of either $n-1$ or of $n$.
\end{corollary}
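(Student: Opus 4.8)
The plan is to derive this statement entirely from results already in hand, since all the combinatorial work has been isolated in the preceding lemmas. The pivotal identity is the one recorded in remark \ref{sensstab}: for every $B\in\mathcal P_n$ (with $n\ge 3$, where characteristic planes and hence characteristic numbers are defined) one has $\chi(B)=\frac{n!}{\card(G_B)}$. Consequently the set of characteristic numbers of sets in $\mathcal P_n$ is exactly $\{\,n!/\card(G_B)\mid B\in\mathcal P_n\,\}$, and the problem reduces to identifying which positive integers occur as $\card(G_B)$.

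For the inclusion ``$\subseteq$'' I would fix an arbitrary $B\in\mathcal P_n$ and invoke proposition \ref{corabel}: $G_B$ is cyclic and its order $r$ is a divisor of $n$ or of $n-1$. Hence $\chi(B)=n!/r$ with $r$ of the required kind. This argument already subsumes the non-exceptional sets: for such $B$ the group $G_B$ is trivial, $r=1$ divides both $n$ and $n-1$, and indeed $\chi(B)=n!$.

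For the reverse inclusion ``$\supseteq$'' I would let $r$ be any divisor of $n$ or of $n-1$ and apply corollary \ref{orderexcp}, which exhibits a set $B\in\mathcal P_n$ (an explicit union of concentric regular $r$-gons, together with the common barycentre when $r\mid n-1$) such that $\card(G_B)=r$. Feeding this $B$ back into remark \ref{sensstab} gives $\chi(B)=n!/r$, so every number of the stated form is realised as a characteristic number.

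I do not expect a genuine obstacle here: the real content is in proposition \ref{corabel} (constraining the order of $G_B$ through the cycle structure of characteristic permutations and lemma \ref{lemrroot}) and in corollary \ref{orderexcp} (realising each admissible order). The only minor points to keep in mind are to restrict to $n\ge 3$ where the notion makes sense, and to note that since $\gcd(n,n-1)=1$ the conditions ``$r\mid n$'' and ``$r\mid n-1$'' overlap only at $r=1$, so the description involves no essential redundancy.
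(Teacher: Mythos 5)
Your proposal is correct and follows exactly the paper's own route: the identity $\chi(B)=\frac{n!}{\card(G_B)}$ from remark \ref{sensstab}, proposition \ref{corabel} for one inclusion, and corollary \ref{orderexcp} for the other. The remarks about $r=1$ covering the non-exceptional sets and about restricting to $n\ge 3$ are sensible but add nothing beyond what the paper's one-line proof already relies on.
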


\begin{proof}
By proposition \ref{corabel}, corollary \ref{orderexcp}, and the fact that $\chi (B)= \frac{n!}{ \card (G_B)} $.
\end{proof}

\begin{corollary} \label{witntwo}
Let $B\in \mathcal P_n$, and let $\tau\in Sym(B)$ have order $2$.
Then $\tau $ is a characteristic permutation of $B$ if and only if the value of $b+\tau (b)$ is constant for all $b\in B$.
\end{corollary}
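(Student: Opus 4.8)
The statement is essentially an unwinding of the definition of characteristic permutation (definition \ref{defpitau}) once one knows that for an order-$2$ permutation the associated constant $Y_{\tau}$ can only be $-1$. So the plan is to prove the two implications separately, with the forward direction relying on lemma \ref{lemrroot} and the backward direction being a direct computation.

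For the backward implication, suppose $b+\tau (b)=c$ for all $b\in B$, with $c\in k$ fixed. Then for any distinct $b,b'\in B$ one computes $\tau (b)-\tau (b')=(c-b)-(c-b')=-(b-b')$, so that $Y_{\tau }(b,b')=-1$ regardless of the chosen pair. By definition \ref{defpitau} this says precisely that $\tau $ is a characteristic permutation of $B$ (with $Y_{\tau }=-1$).

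For the forward implication, assume $\tau \in G_B$. Since $\tau $ has order $2$ it is in particular not the identity, so lemma \ref{lemrroot} applies: writing $\tau =c_1\cdots c_s$ as a product of disjoint cycles, each nontrivial cycle is a transposition, hence has order $2$, and the lemma gives that $Y_{\tau }$ is a primitive $2$-nd root of unity, i.e.\ $Y_{\tau }=-1$. Then for all distinct $b,b'\in B$ we have $\tau (b)-\tau (b')=-(b-b')$, which rearranges to $b+\tau (b)=b'+\tau (b')$; as $b,b'$ were arbitrary, $b+\tau (b)$ is constant on $B$.

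There is no real obstacle here; the only point to be careful about is invoking lemma \ref{lemrroot} correctly, namely noting that an order-$2$ permutation is non-identity and that its nontrivial cycles are transpositions, so the lemma yields $Y_{\tau }=-1$ (the unique primitive square root of unity in $k$, since $\mathrm{char}(k)=0$). Everything else is a two-line algebraic manipulation.
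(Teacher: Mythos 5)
Your proof is correct and follows the same route as the paper: the forward direction applies lemma \ref{lemrroot} to an order-$2$ permutation to get $Y_{\tau}=-1$, and the backward direction checks directly that constancy of $b+\tau(b)$ forces $Y_{\tau}(b,b')=-1$ for all pairs, which is the definition of a characteristic permutation. The paper compresses both steps into one sentence, but the content is identical.
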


\begin{proof}
Lemma \ref{lemrroot} states that $\tau $ is a characteristic permutation of $B$ if and only if $\forall b,b'\in B,b\ne b'\Rightarrow Y_{\tau }(b,b')=-1$; this equality is equivalent to $b+\tau (b)=b'+\tau (b')$.
\end{proof}

\begin{definition}
\begin{itemize}
\item We denote $ \mathcal E_n$ the set of exceptional classes of $\Sigma_n$.
\item We denote $ \mathcal E_n^*$ the set of exceptional classes of $\Sigma_n$ whose elements have characteristic group of even order.
\end{itemize}
\end{definition}

\begin{proposition} \label{170920221714}
$ \mathcal E_n, \mathcal E_n^*$ are subvarieties of $\Sigma_n$.
\end{proposition}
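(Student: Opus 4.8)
The plan is to reduce the statement to the criterion of remark \ref{pointszerone}(5): a subset $F\subseteq\Sigma_n$ is closed if and only if $\eta^{\prime -1}(F)$ is closed in the standard open set $D(S_n)\subseteq k^{n-2}$. Since a closed subset of an affine variety is again an affine variety, it is enough to show that $\eta^{\prime -1}(\mathcal E_n)$ and $\eta^{\prime -1}(\mathcal E_n^*)$ are Zariski-closed in $D(S_n)$. Throughout I will use, for $\lambda=(\lambda_3,\ldots,\lambda_n)\in D(S_n)$, the conventions $\lambda_1=0$ and $\lambda_2=1$; then $\eta'(\lambda)=[\{\lambda_1,\ldots,\lambda_n\}]$ and, by \eqref{deflambda}, this enumeration of the representative set realises exactly the parameters $\lambda_j$.

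For $\mathcal E_n$: by the description of exceptional classes via \eqref{crucialrel}, $\eta'(\lambda)\in\mathcal E_n$ precisely when there is a non-identity $\sigma\in Sym_n$ for which the $n-2$ equations $\lambda_{\sigma(j)}-\lambda_{\sigma(1)}=\lambda_j(\lambda_{\sigma(2)}-\lambda_{\sigma(1)})$, for $j\in\{3,\ldots,n\}$, all hold. For each fixed $\sigma$ these are polynomial identities in $\lambda_3,\ldots,\lambda_n$ (each $\lambda_{\sigma(i)}$ being one of those coordinates or one of the constants $0,1$), so they cut out an affine variety $V_\sigma\subseteq k^{n-2}$, whence $V_\sigma\cap D(S_n)$ is closed in $D(S_n)$. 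Since $Sym_n$ is finite, $\eta^{\prime -1}(\mathcal E_n)=\bigcup_{\sigma\neq id}(V_\sigma\cap D(S_n))$ is a finite union of closed sets, hence closed; therefore $\mathcal E_n$ is a subvariety of $\Sigma_n$. (Corollary \ref{prescribedform} even restricts the $\sigma$ that can occur to a special cyclic type, but for closedness the crude union over all $\sigma\neq id$ suffices.)

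For $\mathcal E_n^*$: by proposition \ref{corabel} the group $G_B$ is cyclic, so $\card (G_B)$ is even exactly when $G_B$ has an element of order $2$; such an element is a characteristic permutation of $B$, and conversely a characteristic permutation of order $2$ forces $2\mid\card (G_B)$. Hence $\eta'(\lambda)\in\mathcal E_n^*$ if and only if $\{\lambda_1,\ldots,\lambda_n\}$ has a characteristic permutation of order $2$. By corollary \ref{witntwo}, an order-$2$ permutation $\tau$ of $B$ is characteristic iff $b+\tau(b)$ is constant over $b\in B$. Transporting this along the isomorphism \eqref{06062022833} determined by the enumeration $(\lambda_1,\ldots,\lambda_n)$ --- and noting that, as $\tau$ runs over the order-$2$ permutations of $\{\lambda_1,\ldots,\lambda_n\}$, the associated $\sigma$ runs over all order-$2$ elements of $Sym_n$ --- this says that there is an order-$2$ permutation $\sigma\in Sym_n$ with $\lambda_j+\lambda_{\sigma(j)}=\lambda_{j'}+\lambda_{\sigma(j')}$ for all $j,j'\in\{1,\ldots,n\}$. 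For fixed $\sigma$ these are affine-linear equations in $\lambda_3,\ldots,\lambda_n$, defining an affine subvariety $W_\sigma\subseteq k^{n-2}$; as before, $\eta^{\prime -1}(\mathcal E_n^*)=\bigcup_{\sigma}(W_\sigma\cap D(S_n))$, the union over the finitely many order-$2$ elements of $Sym_n$, is closed in $D(S_n)$, so $\mathcal E_n^*$ is a subvariety of $\Sigma_n$.

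I do not anticipate a genuine obstacle: both parts come down to a finite union of closed sets. The points needing care are purely translational --- recasting ``$[B]$ exceptional'' via \eqref{crucialrel}, and ``$\card (G_B)$ even'' via the existence of an order-$2$ characteristic permutation (which uses that $G_B$ is cyclic, proposition \ref{corabel}), as honest polynomial conditions on the coordinates $\lambda_j$ --- together with the bookkeeping that the bijection between permutations of $B$ and elements of $Sym_n$ depends on the chosen enumeration but becomes immaterial once one ranges over all of $Sym_n$ (respectively over all its order-$2$ elements). All the structural facts needed are already available in proposition \ref{corabel} and corollaries \ref{multofprime}, \ref{prescribedform}, \ref{witntwo}.
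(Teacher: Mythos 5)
Your argument is correct and follows essentially the same route as the paper: pull $\mathcal E_n$ back along $\eta'$ and write the preimage as a finite union, over non-identity $\sigma\in Sym_n$, of the closed sets cut out by \eqref{crucialrel} in $D(S_n)$. For $\mathcal E_n^*$ the paper simply restricts to $\sigma$ of even order in the same system, whereas you restrict to $\sigma$ of order exactly $2$ (legitimate, since $G_B$ is cyclic) and replace \eqref{crucialrel} by the linear conditions of corollary \ref{witntwo}; this is a harmless and slightly cleaner variant of the same argument.
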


\begin{proof}
Using the notation of remark \ref{pointszerone}(4), a point $(\lambda_3,\ldots ,\lambda_n)\in D(S_n)$ belongs to $\eta^{\prime -1}( \mathcal E_n)$ if and only if there is some non-identity permutation $\sigma $ such that the polynomial system \eqref{crucialrel} is satisfied.
Therefore $\eta^{\prime -1}( \mathcal E_n)$ is the union, with $\sigma $ ranging in $Sym_n\setminus\{ id\} $, of the affine varieties generated by the systems \eqref{crucialrel}, so it is an affine variety invariant with respect to the fibers of $\eta'$.
Consequently, $ \mathcal E_n$ is an affine variety.

The argument for $ \mathcal E_n^*$ is similar, by considering only permutations $\sigma $ of even order.
\end{proof}

To determine the dimension of $ \mathcal E_n, \mathcal E_n^*$, we use that
\begin{equation} \label{13082022}
\dim ( \mathcal E_n)=\dim ((\theta'\theta )^{-1}( \mathcal E_n))-2
\end{equation}
and similarly for $ \mathcal E_n^*$.
So let $I$ be the set of all natural numbers $r$ such that $r\ge 2$ and $r$ is a divisor of either $n-1$ or $n$, and let $I^*$ be the set of even elements of $I$.
For $r\in I$, let $E_r$ be the set of all $(b_1,\ldots ,b_n)\in D(f_n)$ such that $\{ b_1,\ldots ,b_n\} $ satisfies either (a) or (b) of theorem \ref{chrexcsets}.
Therefore
\[ \begin{array}{l}
(\theta'\theta )^{-1}( \mathcal E_n)=\bigcup_{r\in I}E_{r} \\
(\theta'\theta )^{-1}( \mathcal E_n^*)=\bigcup_{r\in I^*}E_r
\end{array}
.
\]

\begin{lemma} \label{keylemdim}
Let $n=sr$ or $n=sr+1$, for some $r\in I$.
Then $E_r$ is a subvariety of $D(f_n)$ and $\dim (E_{r})=s+1$.
\end{lemma}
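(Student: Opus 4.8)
The strategy is to parametrize $E_r$ explicitly using the description of exceptional sets from theorem \ref{chrexcsets} and remark \ref{301120221927}, exhibiting $E_r$ as the image of a standard open subset of an affine space of dimension $s+1$ under a morphism that is generically finite-to-one. First I would fix a primitive $r$-th root of unity $\varepsilon$ (there are finitely many, and each gives one ``sheet'' of the parametrization). Given $\varepsilon$, a tuple in $E_r$ of type (a) is determined, by lemma \ref{30112022}, by the common barycentre $b\in k$ together with one vertex $v_h\in k$ from each regular star $r$-gon $C_h$, for $h\in\{1,\dots,s\}$: indeed $C_h=\{b+\varepsilon^j(v_h-b)\mid 0\le j<r\}$, since the polynomial $P(X)=\varepsilon X+(1-\varepsilon)b$ fixes $b$ and cycles each $C_h$. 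In case (b) the barycentre $b$ is in addition adjoined to $B$, but the parametrization is the same. Thus I would define a morphism
\[
\Psi:k^{s+1}\to k^n,\qquad (b,v_1,\dots,v_s)\mapsto\bigl(b+\varepsilon^j(v_h-b)\bigr)_{h,j}\ \ (\text{plus the coordinate }b\text{ in case (b)}),
\]
whose image, after removing the locus where two coordinates coincide, lands in $D(f_n)$.

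The key steps, in order, are: (1) Identify the open set $U\subseteq k^{s+1}$ on which $\Psi$ produces $n$ distinct values; $U$ is cut out by finitely many conditions $v_h\ne b$ and $v_h-b\ne\varepsilon^j(v_{h'}-b)$ for the relevant $h,h',j$, so $U=D(g)$ is a standard open set, hence an affine variety of dimension $s+1$. (2) Observe that $\Psi$ restricted to $U$ is injective up to the obvious finite symmetry: permuting the $C_h$ among themselves and replacing each $v_h$ by another of its $r$ vertices; this is a finite group acting on $U$, so $\Psi|_U$ has finite fibers. (3) Show $\Psi(U)=E_r$ by invoking theorem \ref{chrexcsets}: every point of $E_r$ arises this way, and conversely every point of $\Psi(U)$ satisfies (a) or (b). Taking the union over the finitely many primitive $r$-th roots $\varepsilon$ still gives a finite-to-one cover. (4) Conclude: $E_r$ is the image of the affine variety $U$ (a finite disjoint union of copies, one per $\varepsilon$) under a morphism with finite fibers, so by \cite[corollary IV.3.8(2)]{perrin2008} (the same dimension-of-fibers result used in corollary \ref{02102022}) we get $\dim(E_r)=\dim(U)=s+1$, and $E_r$ is a subvariety of $D(f_n)$ because it is constructible and closed in $D(f_n)$ — closedness follows since $E_r=\bigcup_{\sigma}V(\text{system \eqref{crucialrel} for }\sigma)\cap D(f_n)$ intersected with the right component, as in the proof of proposition \ref{170920221714}.

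The main obstacle I anticipate is step (3) together with the bookkeeping in step (2): one must be careful that $E_r$ as defined (sets admitting a decomposition of type (a) or (b) with that specific $r$) is exactly $\Psi(U)$ and not something larger or smaller — in particular a set could a priori admit decompositions for several values of $r$, but this does not affect $\dim(E_r)$ since we only need $E_r\subseteq\Psi(U)$ and $\Psi(U)\subseteq E_r$, both of which are immediate from theorem \ref{chrexcsets}. A secondary subtlety is confirming that $\Psi$ is genuinely a morphism of varieties (it is, being given by polynomials in the coordinates, with $\varepsilon$ a constant) and that the finite symmetry group in step (2) indeed accounts for all coincidences of fibers — but since we only need \emph{finiteness} of fibers, not their exact size, an upper bound of $s!\cdot r^s$ on the fiber cardinality suffices and is clear. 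Finally, showing $E_r$ is closed in $D(f_n)$ (rather than merely constructible) can alternatively be deduced directly: $E_r$ is the zero set, within $D(f_n)$, of the ideal describing ``there exists a characteristic permutation whose cycle type is $s$ disjoint $r$-cycles'', which by corollary \ref{prescribedform} and the polynomial relations \eqref{crucialrel} is a finite union of affine varieties.
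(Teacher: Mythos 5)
Your proposal is correct and follows essentially the same route as the paper: both parametrize $E_r$ by $s+1$ parameters (you use the barycentre plus one vertex per $r$-gon; the paper uses one vertex per $r$-gon plus the second vertex of the first, which determines the barycentre via lemma \ref{30112022}), restrict to a standard open set where the $n$ outputs are distinct, and conclude via finiteness of fibers of the resulting morphism after taking the finite union over primitive roots $\varepsilon$ and coordinate permutations $\sigma$. The only point to make explicit is that covering all of $E_r$ requires postcomposing $\Psi$ with permutations of the $n$ coordinates, not just varying $\varepsilon$, but this is a finite union and does not affect the dimension count.
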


\begin{proof}
We exploit the idea of remark \ref{301120221927}.

Assume first that $n=sr$.
If $\varepsilon $ is a primitive $r$-th root of unity, consider $\psi_{\varepsilon }:k^{s+1}\to k^n$ defined as follows: given $(b_{1},b_{2},b_{{r+1}},b_{{2r+1}},b_{{3r+1}},\ldots ,b_{{(s-1)r+1}})\in k^{s+1}$, let
\[
\psi_{\varepsilon }(b_{1},b_{2},b_{{r+1}},b_{{2r+1}},b_{{3r+1}},\ldots ,b_{{(s-1)r+1}})=(b_h)_{h=1}^n
\]
where
\begin{equation} \label{291020221132}
b_{{ur+1+j}}=\varepsilon^jb_{{ur+1}}+(1+\varepsilon +\varepsilon^2+\ldots +\varepsilon^{j-1})(b_{2}-\varepsilon b_1)
\end{equation}
for $u\in\{ 0,\ldots ,s-1\} ,j\in\{ 1,\ldots ,r-1\} $.
Therefore $\psi_{\varepsilon }$ takes as input a sequence of $s$ vertices $ \vec b =(b_{1},b_{{r+1}},b_{{2r+1}},\ldots ,b_{{(s-1)r+1}})$ plus an extra vertex $b_{2}$, and outputs $s$ concentric regular star $r$-gons each one starting at a vertex $b_{ur+1}$ in $ \vec b $ and having $\varepsilon b_{ur+1}+b_2-\varepsilon b_1$ as second vertex.

By \eqref{precrucialrelbis} and lemma \ref{lemrroot}, every element of $E_r$ is in the range of some $\sigma\psi_{\varepsilon }$, for $\varepsilon $ a primitive $r$-th root of unity and $\sigma $ a permutation of the coordinates..
Moreover, $\psi_{\varepsilon }(b_{1},b_{2},b_{{r+1}},b_{{2r+1}},\ldots ,b_{{(s-1)r+1}})$ is in $E_r$ if and only if its components $b_h$, for $h\in\{ 1,\ldots ,n\} $, are all distinct, which is equivalent to $(b_{1},b_{2},b_{{r+1}},b_{{2r+1}},\ldots ,b_{{(s-1)r+1}})\in D(g_{\varepsilon })$ where $g_{\varepsilon }$ is the product of the polynomials:
\begin{itemize}
\item $X_{j}-X_{{j'}}$ for $j,j'\in\{ 1,2,r+1,2r+1,3r+1,\ldots ,(s-1)r+1\} ,j\ne j'$,
\item $X_{{vr+1}}-\varepsilon^jX_{{ur+1}}-(1+\varepsilon +\varepsilon^2+\ldots +\varepsilon^{j-1})(X_{2}-\varepsilon X_{1})$ for $u\ne v,j\in\{ 1,\ldots ,r-1\} $,
\item $X_{j}- \frac 1{1-\varepsilon } (X_{2}-\varepsilon X_{1})$ for $j\in\{ r+1,2r+1,3r+1,\ldots ,(s-1)r+1\} $.
\end{itemize}

By equations \eqref{291020221132} the range of every $\psi_{\varepsilon }|_{D(g_{\varepsilon })}$ is an affine variety and $\psi_{\varepsilon }|_{D(g_{\varepsilon })}$ is an isomorphism onto its range.
Therefore $\dim (E_r)=\dim (D(g_{\varepsilon }))=\dim (k^{s+1})$ for any $\varepsilon $, that is $\dim (E_r)=s+1$.

The case $n=sr+1$ is similar.
If $\varepsilon $ is a primitive $r$-th root of unity, consider $\psi_{\varepsilon }:k^{s+1}\to k^n$ defined as follows: given $(b_{1},b_{2},b_{{r+1}},b_{{2r+1}},b_{{3r+1}},\ldots ,b_{{(s-1)r+1}})\in k^{s+1}$, let
\[
\psi_{\varepsilon }(b_{1},b_{2},b_{{r+1}},b_{{2r+1}},b_{{3r+1}},\ldots ,b_{{(s-1)r+1}})=(b_h)_{h=1}^n
\]
where
\begin{align*}
b_{{ur+1+j}}= & \varepsilon^jb_{{ur+1}}+(1+\varepsilon +\varepsilon^2+\ldots +\varepsilon^{j-1})(b_{2}-\varepsilon b_1) \\
 & \text{for } u\in\{ 0,\ldots ,s-1\} ,j\in\{ 1,\ldots ,r-1\} , \\
b_{n}= & \frac{b_{2}-\varepsilon b_{1}}{1-\varepsilon } .
\end{align*}
Again by \eqref{precrucialrelbis} and lemma \ref{lemrroot}, every element of $E_r$ is in the range of some $\sigma\psi_{\varepsilon }$, for $\varepsilon $ a primitive $r$-th root of unity and $\sigma $ a permutation of the coordinates, and $\psi_{\varepsilon }(b_{1},b_{2},b_{{r+1}},b_{{2r+1}},\ldots ,b_{{(s-1)r+1}})$ is in $E_r$ if and only if $(b_{1},b_{2},b_{{r+1}},b_{{2r+1}},\ldots ,b_{{(s-1)r+1}})\in D(g_{\varepsilon })$ where $g_{\varepsilon }$ is the same as above.
Now the proof can be concluded as in the previous case.
\end{proof}

Then
\begin{equation} \label{maxattnd} \begin{array}{l}
\dim ((\theta'\theta )^{-1}( \mathcal E_n))=\max\{\dim (E_r)\}_{r\in I} \\
\dim ((\theta'\theta )^{-1}( \mathcal E_n^*))=\max\{\dim (E_r)\}_{r\in I^*}
\end{array}
.
\end{equation}
Lemma \ref{keylemdim} says that this maximum is attained when $r$ is small.

\begin{corollary} \label{170920221716}
If $n$ is even then $\dim ( \mathcal E_n)=\dim ( \mathcal E_n^*)= \frac {n-2}2 $, if $n$ is odd then $\dim ( \mathcal E_n)=\dim ( \mathcal E_n^*)= \frac{n-3}2$.
\end{corollary}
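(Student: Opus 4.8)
The plan is to feed lemma \ref{keylemdim} into \eqref{13082022} and \eqref{maxattnd}, reducing the computation to an elementary maximisation of the integer $\dim(E_r)=s+1$ over $r\in I$ and over $r\in I^*$.

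Concretely, I would first record what lemma \ref{keylemdim} gives: for $r\in I$ one has $\dim(E_r)=s+1$, where $s=n/r$ if $r\mid n$ and $s=(n-1)/r$ if $r\mid n-1$ (for $r\ge 2$ exactly one of these two alternatives occurs, since $r$ cannot divide both $n$ and $n-1$). Since one of $n,n-1$ is even, $2\in I$, and I claim $r=2$ maximises $\dim(E_r)$. Indeed, for any $r\in I$ with $r\ge 3$ we have $s\le n/3$, hence $\dim(E_r)\le n/3+1$; on the other hand $\dim(E_2)$ equals $n/2+1$ when $n$ is even and $(n-1)/2+1$ when $n$ is odd, and in both cases $n/3+1\le\dim(E_2)$ because $n/3\le(n-1)/2$ for all $n\ge 3$. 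Therefore $\max\{\dim(E_r)\}_{r\in I}=\dim(E_2)$, which equals $n/2+1$ for $n$ even and $(n+1)/2$ for $n$ odd.

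Next I would observe that $2$ is even, so $2\in I^*$, and since $\dim(E_2)$ is already the maximum over the larger set $I$, it is a fortiori the maximum over $I^*$; thus $\max\{\dim(E_r)\}_{r\in I^*}=\dim(E_2)$ as well. Plugging this into \eqref{maxattnd} and then into \eqref{13082022} (and its analogue for $\mathcal E_n^*$) yields
\[
\dim(\mathcal E_n)=\dim(\mathcal E_n^*)=\dim(E_2)-2,
\]
which is $n/2-1=\frac{n-2}2$ when $n$ is even and $(n+1)/2-2=\frac{n-3}2$ when $n$ is odd, as claimed.

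There is essentially no obstacle beyond lemma \ref{keylemdim} itself: the remaining content is the bookkeeping showing that the smallest admissible period $r=2$ dominates, together with the trivial remark that $2\in I^*$ so the even-order case is not smaller. The one place to be slightly careful is the edge range, namely making sure the inequality $n/3\le(n-1)/2$ is invoked only for $n\ge 3$ (where $\dim(\mathcal E_3)=0$, consistent with $\frac{n-3}2$); this is automatic since exceptional classes are defined for $n\ge 3$.
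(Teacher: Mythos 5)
Your proposal is correct and follows exactly the route the paper intends: its proof of this corollary is literally "apply \eqref{maxattnd}, lemma \ref{keylemdim}, and \eqref{13082022}," and your write-up just fills in the elementary maximisation showing that $r=2$ (which lies in $I^*\subseteq I$ since one of $n,n-1$ is even) realises the maximum of $\dim(E_r)=s+1$. The bookkeeping, including the observation that $r$ cannot divide both $n$ and $n-1$ and the inequality $n/3\le (n-1)/2$ for $n\ge 3$, is accurate.
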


\begin{proof}
Apply \eqref{maxattnd}, lemma \ref{keylemdim}, and \eqref{13082022}.
\end{proof}

For example, in the affine line $\Sigma_3$, solving \eqref{crucialrel} where $\sigma $ ranges over transpositions, yields for $\lambda_3$ the values $ \frac 12 ,2,-1$.
As $ \left \{
0,1, \frac 12
\right \}
\eqpol \{ 0,1,2\} \eqpol \{0,1,-1\} $, the subvariety $ \mathcal E_3^*$ consists of one point.
Solving \eqref{crucialrel} where $\sigma $ ranges over cycles of order $3$, yields $\lambda_3= \frac{1\pm \sqrt{3} i}2 $.
As $ \left \{
0,1, \frac{1+ \sqrt{3} i}2
\right \}
\eqpol \left \{
0,1, \frac{1- \sqrt{3} i}2
\right \} $, it follows that $ \mathcal E_3= \left \{ \left [ \left \{
0,1, \frac 12
\right \} \right ] , \left [ \left \{
0,1, \frac{1+ \sqrt{3} i}2
\right \} \right ] \right \} $.

\subsection{Polynomial reducibility on finite sets} \label{18092022}
Let $A=\{ a_1,\ldots ,a_m\}\in \mathcal P_m,B=\{ b_1,\ldots ,b_n\}\in \mathcal P_n$, for some $m>n\ge 2$.
Let $P$ be a polynomial such that $A=P^{-1}(B)$ and let $\gamma =\deg (P)$.
Since every equation $P(x)-b_j=0$ has at most $\gamma $ solutions, it follows that $m\le n\gamma $, that is $\gamma\ge \frac mn $.

On the other hand, if $a$ is a multiple solution of $P(x)-b_j=0$ of multiplicity $r_a>1$, then $a$ is a root of multiplicity $r_a-1$ of the derivative $P'$.
Let $M$ be the set of all multiple solutions of the equations $P(x)-b_j=0$ for $j\in\{ 1,\ldots ,n\} $.
Since $P'$ has degree $\gamma -1$, letting $t=\sum_{a\in M}(r_a-1)$ it follows that $0\le t\le\gamma -1$.
From the fact that $n\gamma -t=m$, it then follows that $n\gamma -(\gamma -1)\le m$, whence $\gamma\le \frac{m-1}{n-1} $.

Summing up,
\begin{equation} \label{conditiong}
\frac mn \le\gamma\le \frac{m-1}{n-1} ,
\end{equation}
that is
\begin{equation} \label{conditionginv}
\gamma (n-1)+1\le m\le \gamma n.
\end{equation}
In particular, a necessary condition for a set in $ \mathcal P_m$ to reduce polynomially to a set in $ \mathcal P_n$ is that:
\begin{equation} \label{between}
\text{there exists a natural number in } \left [
\frac mn , \frac {m-1}{n-1}
\right ]
.
\end{equation}

Let $e$ be the integer part of $ \frac mn $, that is the biggest integer such that $m\ge en$.
Then condition \eqref{between} is equivalent to
\begin{equation} \label{e}
\text{either } m=en,\quad \text{ or } m\ge (e+1)n-e.
\end{equation}
Notice also that \eqref{conditionginv}, which can be rewritten as
\begin{equation} \label{unionintervals}
m\in\bigcup_{\gamma\ge 2}[\gamma (n-1)+1,\gamma n]\cap \N ,
\end{equation}
is certainly satisfied when $m\ge (n-1)^2+1$, since in the union displayed in \eqref{unionintervals} the interval with $\gamma =n-1$ is adjacent to the interval with $\gamma =n$, and for bigger values of $\gamma $ the intervals overlap.

By \eqref{e}, given $B\in \mathcal P_n$ the least $m>n$ for which there may exist $A\in \mathcal P_m$ with $A \pol B$ is $m=2n-1$.
In this case, by \eqref{conditiong} any polynomial reducing $A$ to $B$ has degree $2$.
On the other hand, if there is a polynomial of degree $2$ reducing $A$ to $B$, then either $A\in \mathcal P_{2n-1}$ or $A\in \mathcal P_{2n}$.

The following lemma extends remark \ref{fewpols}.

\begin{lemma} \label{040920221508}
Fix $n,m,A,B$, with $2\le n<m,A\in \mathcal P_m,B\in \mathcal P_n$.
Then there are only finitely many (if any) polynomials $P$ such that $A=P^{-1}(B)$.
\end{lemma}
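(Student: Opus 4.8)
The plan is to combine the degree bound \eqref{conditiong}, already established for any polynomial reducing an $m$-element set to an $n$-element set, with a Lagrange interpolation argument. The key observation is that such a polynomial has degree strictly less than $m=\card (A)$, hence is completely determined by its values on the points of $A$, and those values all lie in the finite set $B$; this bounds the number of admissible $P$ by $n^m$.

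In more detail, suppose $P$ satisfies $A=P^{-1}(B)$. Since $A$ is a non-empty proper subset of $k$, the polynomial $P$ cannot be constant (a constant polynomial has preimage $\emptyset$ or $k$), so \eqref{conditiong} applies and $\deg (P)\le \frac{m-1}{n-1}\le m-1$, the last inequality because $n\ge 2$. Write $A=\{ a_1,\ldots ,a_m\} $. For each $i$ we have $a_i\in A=P^{-1}(B)$, so $P(a_i)\in B$, and therefore $(P(a_1),\ldots ,P(a_m))\in B^m$, a set of cardinality $n^m$. Finally, if $P_1,P_2$ are polynomials of degree at most $m-1$ agreeing at the $m$ distinct points $a_1,\ldots ,a_m$, then $P_1-P_2$ has degree at most $m-1$ and vanishes at $m$ distinct points, hence $P_1=P_2$. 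Thus $P\mapsto (P(a_1),\ldots ,P(a_m))$ is injective on the set of polynomials reducing $A$ to $B$, and that set has at most $n^m$ elements.

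I do not expect a genuine obstacle here: once the degree bound \eqref{conditiong} is available, finiteness drops out immediately, together with the (crude) explicit bound $n^m$. The only point requiring a moment's care is verifying $\deg (P)\le m-1$, which is exactly where the hypothesis $n\ge 2$ is used, so that interpolation at the $m$ points of $A$ determines $P$ uniquely. One could presumably sharpen the bound (e.g.\ by also fixing the partition of $A$ into the fibers $P^{-1}(b_j)$ and arguing as in remark \ref{fewpols}), but this is unnecessary for the statement.
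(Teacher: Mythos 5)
Your proof is correct and follows essentially the same route as the paper's: both use \eqref{conditiong} to get $\deg(P)\le\frac{m-1}{n-1}<m$ and then observe that a polynomial of degree less than $m$ is determined by its values on the $m$ points of $A$, which can only lie in the finite set $B$. The paper organizes the count by the partition of $A$ into fibers $P^{-1}(\{b\})$ rather than by the tuple $(P(a_1),\ldots,P(a_m))$, but this is the same data and the same uniqueness argument.
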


\begin{proof}
It is enough to show that for any $\gamma $ satisfying \eqref{conditiong} there are only finitely many polynomials $P$ of degree $\gamma $ reducing $A$ to $B$.
Any such $P$ determines a partition $\{ A_b^P\}_{b\in B}$ of $A$ such that $A_b^P=P^{-1}(\{ b\} )$ for every $b\in B$.
Therefore it is enough to show that given a partition $\{ A_b\}_{b\in B}$ of $A$ there are finitely many (in fact, at most one) $P$ of degree $\gamma $ such that $A_b=A_b^P$ for every $b\in B$.
Indeed, if $P,Q$ are polynomials of degree $\gamma $ such that $A_b^P=A_b^Q$ for every $b\in B$, then $P(X)-Q(X)$ is a polynomial of degree at most $\gamma $ taking value $0$ on all elements of $A$.
From $\gamma <m$ it then follows that $P=Q$.
\end{proof}

\begin{lemma} \label{lemmaprivileged}
Let $A,B\subseteq k$.
Assume that there is a polynomial $P$ of degree $\gamma\ge 1$ reducing $A$ to $B$, and fix distinct $a,a'\in A$ and distinct $b,b'\in B$ with $P(a)=b$ and $P(a')=b'$.
Let also $x,x',y,y'\in k$ be such that $x\ne x'$ and $y\ne y'$.
Then there exist $A',B'\subseteq k$ with
\[
x,x'\in A',\quad A' \eqpol A,\qquad y,y'\in B',\quad B' \eqpol B,
\]
and a polynomial $Q$ of degree $\gamma $ reducing $A'$ to $B'$ and such that
\[
Q(x)=y,\qquad Q(x')=y'.
\]

Moreover, the multiplicities of $x,x'$ as roots of the polynomials $Q(X)-y,Q(X)-y'$, respectively, are the same as the multiplicities of $a,a'$ as roots of the polynomials $P(X)-b,P(X)-b'$, respectively.
\end{lemma}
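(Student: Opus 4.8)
\emph{Proof proposal.}
The plan is to obtain $Q$, $A'$, $B'$ by sandwiching $P$ between two linear polynomials, one acting on the source and one on the target. First I would let $M$ be the (unique) linear polynomial with $M(x)=a$ and $M(x')=a'$: since $x\ne x'$ and $a\ne a'$, the leading coefficient of $M$ equals $(a-a')/(x-x')\ne 0$, so indeed $M\in \mathcal L$ and $M$ is a bijection. Symmetrically, let $L\in \mathcal L$ be the unique linear polynomial with $L(b)=y$ and $L(b')=y'$, which exists because $y\ne y'$ and $b\ne b'$. Then set
\[
A'=M^{-1}(A),\qquad B'=L(B),\qquad Q=L\circ P\circ M.
\]

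The verifications are routine. Since $\deg (L)=\deg (M)=1$ we get $\deg (Q)=\deg (L)\deg (P)\deg (M)=\gamma $. Using that $L$ and $M$ are bijections,
\[
Q^{-1}(B')=\{ t\in k\mid L(P(M(t)))\in L(B)\} =\{ t\in k\mid M(t)\in P^{-1}(B)\} =M^{-1}(A)=A',
\]
so $Q$ reduces $A'$ to $B'$. Moreover $A' \eqpol A$: the equality $A'=M^{-1}(A)$ exhibits $A' \pol A$, while $A=M(A')$ is the preimage of $A'$ under the polynomial $M^{-1}$, which exhibits $A \pol A'$; the same argument with $L$ and $L^{-1}$ gives $B' \eqpol B$. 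Note that this uses nothing about finiteness of $A,B$, as is appropriate here. Finally $M(x)=a\in A$ and $M(x')=a'\in A$ give $x,x'\in A'$, while $y=L(b)$ and $y'=L(b')$ with $b,b'\in B$ give $y,y'\in B'$; and $Q(x)=L(P(a))=L(b)=y$, $Q(x')=L(P(a'))=L(b')=y'$.

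For the multiplicity claim I would write $L(X)=dX+d'$ with $d\ne 0$, so that $Q(X)-y=L(P(M(X)))-L(b)=d\bigl( P(M(X))-b\bigr) $; hence $x$ is a root of $Q(X)-y$ of exactly the same multiplicity as it is a root of $P(M(X))-b$. Writing $M(X)=cX+c'$ with $c\ne 0$ and factoring $P(X)-b=u\prod_i(X-\alpha_i)^{m_i}$ with $\sum_i m_i=\gamma $, we get $P(M(X))-b=uc^{\gamma }\prod_i\bigl( X-M^{-1}(\alpha_i)\bigr)^{m_i}$, and since $M^{-1}(a)=x$ the multiplicity of $x$ as a root of $P(M(X))-b$ is the multiplicity of $a$ as a root of $P(X)-b$. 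The identical computation with $b'$ in place of $b$ and $a'$ in place of $a$ handles $x'$ and $Q(X)-y'$.

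No step presents a genuine obstacle. The only points that need care are checking that $M$ and $L$ are honestly of degree $1$ — which is precisely where the hypotheses $x\ne x'$, $a\ne a'$, $y\ne y'$, $b\ne b'$ enter, and what guarantees $\deg (Q)=\gamma $ — together with the elementary bookkeeping of root multiplicities under an affine substitution of the variable.
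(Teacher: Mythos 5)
Your construction is exactly the paper's: the paper takes a linear $R$ with $R(x)=a$, $R(x')=a'$ and a linear $S$ with $S(b)=y$, $S(b')=y'$, and sets $A'=R^{-1}(A)$, $B'=S(B)$, $Q=SPR$, which matches your $M$, $L$, $Q=L\circ P\circ M$. Your verifications, including the multiplicity bookkeeping that the paper leaves implicit, are correct.
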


\begin{proof}
Let $R$ be a linear polynomial such that $R(x)=a,R(x')=a'$, and let $S$ be a linear polynomial such that $S(b)=y,S(b')=y'$.
Set
\[
A'=R^{-1}(A),\quad B'=S(B),\quad Q=SPR.
\]
\end{proof}

The main use of lemma \ref{lemmaprivileged} is with $x=0,x'=1$, or $y=0,y'=1$: this allows in particular to replace a set with an equivalent set containing $0,1$, which sometimes simplifies calculations.

The following should be contrasted with theorem \ref{030820221312} and proposition \ref{longzchain}.

\begin{proposition} \label{180920220929}
Let $\Theta\in\bigcup_{m\in \N }\Sigma_m$.
Then the set $ \left \{
\Xi\mid\Theta \pol \Xi
\right \} $ is finite.
\end{proposition}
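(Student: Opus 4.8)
Write $\Theta=[B]$ with $B$ a finite subset of $k$; necessarily $B\ne\emptyset $, since $[\emptyset ]$ polynomially reduces to every class other than $[k]$. Put $n= \card (B)$. The plan is to prove that, up to $ \eqpol $, only finitely many sets $C$ satisfy $B \pol C$.

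First I would reduce to finite $C$. Suppose $B \pol C$, and let $P$ be a polynomial with $B=P^{-1}(C)$. Since $B\notin\{\emptyset ,k\} $, $P$ is nonconstant, hence surjective, so $C=P(B)$ by remark \ref{04042022732}; thus $C\ne\emptyset $, and by proposition \ref{cardinality} $C$ is finite with $1\le \card (C)\le n$. If $ \card (C)=1$, then $[C]$ is the unique element of $\Sigma_1$ by proposition \ref{incrcard}; if $ \card (C)=n$, then $[C]=[B]$ by corollary \ref{reductioniffequivalence}. So it remains to bound the number of classes $[C]$ with $2\le \card (C)<n$ and $B \pol C$.

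Fix such a $C$ and a reducing polynomial $P$, and set $\gamma=\deg (P)$, $j= \card (C)$. By \eqref{conditiong} (with $m,n$ there being our $n$ and $j$) and since $2\le j<n$, we have $\gamma\le \frac{n-1}{j-1}\le n-1$, so $\gamma$ is bounded in terms of $n$ alone. Choose any $a\in B$ and put $c=P(a)$, $F=P^{-1}(c)$. As $P^{-1}(c)\subseteq P^{-1}(C)=B$, the set $F$ is a nonempty subset of $B$, and all roots of $P(X)-c$ lie in $F$; writing $m_b\ge 1$ for the multiplicity of $b\in F$ as a root of $P(X)-c$, and $\lambda\in k\setminus\{ 0\} $ for the leading coefficient of $P$, we get
\[
P(X)-c=\lambda\prod_{b\in F}(X-b)^{m_b},\qquad \sum_{b\in F}m_b=\gamma\le n-1 .
\]
Set $Q(X)=\prod_{b\in F}(X-b)^{m_b}$. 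Then $P=\lambda Q+c$, so $C=P(B)=\lambda\cdot Q(B)+c$ is a linear image of $Q(B)$, whence $[C]=[Q(B)]$. Thus $[C]$ is determined by the pair $(F,(m_b)_{b\in F})$; there are at most $2^n$ subsets $F\subseteq B$, and for each one only finitely many tuples of positive integers of sum at most $n-1$. Hence $\{ [C]\mid B \pol C\} $ is finite.

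The only step beyond bookkeeping is the boundedness of $\gamma$, and this is precisely why the cases $ \card (C)\in\{ 1,n\} $ are handled first: a reduction to a one-element set may have arbitrarily large degree — one can freely inflate the multiplicities $m_b$ when $F=B$ — so the estimate \eqref{conditiong}, available exactly in the range $2\le \card (C)<\card (B)$, is essential. The remaining verifications are routine.
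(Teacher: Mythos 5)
Your proof is correct and follows essentially the same route as the paper's: bound the degree of a reducing polynomial via \eqref{conditiong}, then observe that the target class is determined by finitely many combinatorial data, namely a fibre $F\subseteq B$ together with multiplicities summing to $\gamma$. The only cosmetic difference is that the paper pins down the leading coefficient by normalising $C$ to contain $0$ and $1$, whereas you absorb the leading coefficient and the constant into a linear equivalence via $P=\lambda Q+c$; both work.
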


\begin{proof}
By proposition \ref{incrcard} and \eqref{conditiong}, it is enough to show that if $A=\{a_1,\ldots ,a_m\}\in \mathcal P_m$, then for $2\le n<m$ and $ \frac mn \le\gamma\le \frac{m-1}{n-1} $ there are only finitely many classes $[B]$ with $B=\{ 0,1,b_3,\ldots ,b_n\}\in \mathcal P_n$ such that $A=P^{-1}(B)$ for some polynomial $P$ of degree $\gamma $.
If $B$ is such a set and $P$ such a polynomial, then there exist a non-empty set $I=\{ i_1,\ldots ,i_p\}\subseteq\{ 1,\ldots ,m\} $ and an element $j\in\{ 1,\ldots ,m\}\setminus\{ i_1,\ldots ,i_p\} $ such that $P^{-1}(\{ 0\} )=\{ a_{i_1},\ldots ,a_{i_p}\} ,P(a_j)=1$.
Therefore there exist positive integers $r_1,\ldots ,r_p$ with $r_1+\ldots +r_p=\gamma $ such that $P(X)=c(X-a_{i_1})^{r_1}\cdot\ldots\cdot (X-a_{i_p})^{r_p}$ where, from $P(a_j)=c(a_j-a_{i_1})^{r_1}\cdot\ldots\cdot (a_j-a_{i_p})^{r_p}=1$, it follows that $c= \frac 1{(a_j-a_{i_1})^{r_1}\cdot\ldots\cdot (a_j-a_{i_p})^{r_p}} $.
Since $B=P(A)$, this implies that the class $[B]$ depends only on the choice of $i_1,\ldots ,i_p,j,r_1,\ldots ,r_p$; as there are finitely many such choices, the result follows.
\end{proof}

\begin{definition}
Given $n,m,\gamma $ satisfying \eqref{conditiong} and $\Theta\in\Sigma_n$, let $ \mathcal A_m^{\gamma }(\Theta )$ be set of all $\Xi\in\Sigma_m$ such that there exist $B\in\Theta $, $A\in\Xi $, and a polynomial of degree $\gamma $ reducing $A$ to $B$.

Let also
\begin{equation} \label{03112022}
\mathcal A_m(\Theta )=\{\Xi\in\Sigma_m\mid\Xi \pol \Theta\} =\bigcup_{\gamma\in \left [
\frac mn , \frac {m-1}{n-1}
\right ]
\cap \N } \mathcal A_m^{\gamma }(\Theta ).
\end{equation}
\end{definition}

Note that the definition of $ \mathcal A_m^{\gamma }(\Theta )$ does not really depend on the choice of $A,B$, since any sets equivalent to $A,B$, respectively, are bireducible to them by linear polynomials. 

Our goal is to show that $ \mathcal A_m(\Theta )$ is an affine variety and compute its dimension.
Since the union in \eqref{03112022} is finite, this amounts to show this for all $ \mathcal A_m^{\gamma }(\Theta )$.

\begin{theorem} \label{bidimvar}
Fix $n,m$ with $n<m$ and let $\Theta\in\Sigma_n$.
Then $ \mathcal A_m(\Theta )$ is a subvariety of $\Sigma_m$.
\end{theorem}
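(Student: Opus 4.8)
The plan is to reduce the statement to a claim about $D(f_m)$ and then to exhibit the relevant locus as an explicit finite union of closed subsets, in the spirit of lemma \ref{keylemdim}. As the union in \eqref{03112022} is finite, it suffices to prove that each $ \mathcal A_m^{\gamma }(\Theta )$ is a subvariety of $\Sigma_m$; moreover we may assume $n\ge 2$, since for $n=1$ proposition \ref{incrcard}(2) gives $ \mathcal A_m(\Theta )=\Sigma_m$. So fix $\gamma $ with $ \frac mn \le\gamma\le \frac{m-1}{n-1} $ (hence $\gamma\ge 2$ and $\gamma <m$) and fix a representative $B=\{ b_1,\ldots ,b_n\}\in\Theta $. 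The map $\theta'\theta :D(f_m)\to\Sigma_m$ is a quotient map, being a composition of the geometric quotient maps $\theta $ and $\theta'$ (see also remark \ref{pointszerone}(5) and diagram \eqref{commdiag}), so a subset of $\Sigma_m$ is closed exactly when its preimage in $D(f_m)$ is closed; hence it is enough to show that
\[
Z:=(\theta'\theta )^{-1}( \mathcal A_m^{\gamma }(\Theta ))=\{ (a_1,\ldots ,a_m)\in D(f_m)\mid\exists P,\ \deg P=\gamma ,\ P^{-1}(B)=\{ a_1,\ldots ,a_m\}\}
\]
is closed in $D(f_m)$. The second equality holds because, as observed after the definition of $ \mathcal A_m^{\gamma }$, membership of $[\{ a_i\} ]$ in $ \mathcal A_m^{\gamma }(\Theta )$ is unaffected by replacing $A$ or $B$ by $ \eqpol $-equivalent sets via linear polynomials, so one may take $A=\{ a_1,\ldots ,a_m\} $ itself and $B$ the fixed representative.

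Now I would stratify $Z$ by \emph{combinatorial type}. A polynomial $P$ of degree $\gamma $ with $P^{-1}(B)=\{ a_1,\ldots ,a_m\} $ determines a surjection $t:\{ 1,\ldots ,m\}\to\{ 1,\ldots ,n\} $ (surjective because $P(\{ a_i\} )=B$ by remark \ref{04042022732}(2)) with $P(a_\ell )=b_{t(\ell )}$, together with, for each $\ell $, the multiplicity $\rho_\ell\ge 1$ of $a_\ell $ as a root of $P-b_{t(\ell )}$; since all the roots of $P-b_j$ lie in $\{ a_i\} $, one gets $\sum_{t(\ell )=j}\rho_\ell =\gamma $ for every $j$. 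There are only finitely many such data $ \mathbf t=(t,(\rho_\ell )_\ell )$. Fix one; write $L_j=t^{-1}(j)$, so the $L_j$ partition $\{ 1,\ldots ,m\} $ and are non-empty, and since $n\ge 2$ we may fix $i_*\in L_2$, so that $i_*\notin L_1$ and $j_*:=t(i_*)=2\ne 1$. For any $P$ of type $ \mathbf t$ with $P^{-1}(B)=\{ a_i\} $ one has $P-b_1=\mathrm{lead}(P)\prod_{\ell\in L_1}(X-a_\ell )^{\rho_\ell }$; evaluating at $a_{i_*}$ forces $\mathrm{lead}(P)=c:= \frac{b_{j_*}-b_1}{\prod_{\ell\in L_1}(a_{i_*}-a_\ell )^{\rho_\ell }} $, hence $P=P^{( \mathbf t)}(a_1,\ldots ,a_m):=c\prod_{\ell\in L_1}(X-a_\ell )^{\rho_\ell }+b_1$. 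On $D(f_m)$ the differences $a_{i_*}-a_\ell $ are invertible and $b_{j_*}-b_1\ne 0$, so $P^{( \mathbf t)}$ is a morphism from $D(f_m)$ to the space of polynomials whose leading coefficient $c$ never vanishes; in particular $P^{( \mathbf t)}( \vec a)$ has degree exactly $\gamma $ for every $ \vec a\in D(f_m)$.

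Finally, let $D_{ \mathbf t}\subseteq D(f_m)$ be the set of $ \vec a$ for which $P^{( \mathbf t)}( \vec a)$ is genuinely of type $ \mathbf t$, that is $P^{( \mathbf t)}( \vec a)-b_j=c\prod_{\ell\in L_j}(X-a_\ell )^{\rho_\ell }$ for every $j\in\{ 1,\ldots ,n\} $. Equating coefficients of $X$, each of these is a finite system of polynomial equations in $ \vec a$, so $D_{ \mathbf t}$ is closed in $D(f_m)$. I claim $Z=\bigcup_{ \mathbf t}D_{ \mathbf t}$: if $ \vec a\in D_{ \mathbf t}$ then $Q:=P^{( \mathbf t)}( \vec a)$ has degree $\gamma $ and $Q^{-1}(B)=\bigcup_j\{ a_\ell\mid\ell\in L_j\} =\{ a_1,\ldots ,a_m\} $, so $ \vec a\in Z$; conversely, if $ \vec a\in Z$ via some $P$, then $P$ has some type $ \mathbf t$, the above shows $P=P^{( \mathbf t)}( \vec a)$, and $P-b_j=c\prod_{\ell\in L_j}(X-a_\ell )^{\rho_\ell }$ for all $j$, whence $ \vec a\in D_{ \mathbf t}$. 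Therefore $Z$ is a finite union of closed subsets of $D(f_m)$, hence closed, and $ \mathcal A_m^{\gamma }(\Theta )$, and thus $ \mathcal A_m(\Theta )$, is a subvariety of $\Sigma_m$. The one delicate point, and the crux of the argument, is the reconstruction step: that a degree-$\gamma $ reduction is recovered, by a single morphism, from the tuple $ \vec a$ together with its discrete type. This hinges on $n\ge 2$, which forces the existence of an index $i_*$ outside $L_1$ and so pins down the leading coefficient to a nonvanishing regular function; everything else is bookkeeping.
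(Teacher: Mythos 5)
Your proof is correct and follows essentially the same route as the paper's: pull $\mathcal A_m^{\gamma }(\Theta )$ back to $D(f_m)$, stratify by the combinatorial data of the reduction (which indices map to which $b_j$ and with what multiplicities), reconstruct the polynomial as an explicit morphism of $\vec a$ by pinning down the leading coefficient from an evaluation outside $L_1$, and observe that the resulting conditions are polynomial, giving a finite union of closed sets. Your version is in fact slightly tidier at one point: by requiring the full factorisation $P^{(\mathbf t)}(\vec a)-b_j=c\prod_{\ell\in L_j}(X-a_\ell )^{\rho_\ell }$ for \emph{every} $j$ (rather than only the point evaluations $P(a_j)=b_i$), you make explicit that $P^{-1}(B)\subseteq\{ a_1,\ldots ,a_m\} $ and not merely the reverse inclusion.
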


\begin{proof}
Let $B=\{ 0,1,b_3,\ldots ,b_n\}\in\Theta $.
It is enough to show that every
\begin{equation} \label{290720221556}
\begin{array}{rl}
\mathcal V_{\gamma }= & (\theta'\theta )^{-1}( \mathcal A_m^{\gamma }(\Theta ))= \\
= & \{ (a_1,\ldots ,a_m)\mid\{a_1,\ldots ,a_m\} =P^{-1}( \{ 0,1,b_3,\ldots ,b_n\} ) \text{ for some } \\
 & \text{polynomial } P \text{ of degree } \gamma\} ,
\end{array} \end{equation}
for $ \frac mn \le\gamma\le \frac{m-1}{n-1} $, is a subvariety of $D(f_m)$.

The idea is that we can determine whether a given $(a_1,\ldots ,a_m)$ is in $V_{\gamma }$ by looking at all partitions of $\{ a_1,\ldots ,a_m\} $ in $n$ pieces and trying to write a polynomial of degree $\gamma $ such that the elements of the partition are the sets of preimages of the elements of $B$; in order to express this polynomial, we write its factorisation using one of the elements of the partition as the set of preimages of $0\in B$.

So let $(a_1,\ldots ,a_m)\in D(f_m)$.
Then $(a_1,\ldots ,a_m)\in \mathcal V_{\gamma }$ if and only if there exist:
\begin{itemize}
\item an integer $\ell $ with $1\le\ell\le\gamma $,
\item a set of $\ell $ indices $I_1=\{ i_1,\ldots ,i_{\ell }\}\subseteq\{ 1,\ldots ,m\} $,
\item positive integers $r_{1},\ldots ,r_{{\ell }}$ with $r_{1}+\ldots +r_{{\ell }}=\gamma $,
\item and $c\in k\setminus\{ 0\} $
\end{itemize}
such that letting $P(X)=c\prod_{h=1}^{\ell }(X-a_{i_h})^{r_{h}}$ one has $\{ a_1,\ldots ,a_m\}=P^{-1}(\{ 0,1,b_3,\ldots ,b_n\} )$.

This is in turn equivalent to the following condition:
\begin{itemize}
\item[ ] there exist $\ell $ with $1\le\ell\le\gamma $, a partition $\{ I_1,\ldots ,I_n\} $ of $\{ 1,\ldots ,m\} $ with $ \card (I_1)=\ell $, and positive integers $r_{1},\ldots ,r_{\ell }$ with $\sum_{h=1}^{\ell }r_{h}=\gamma $, such that the following equalities hold:
\begin{equation} \label{varietypol} \left \{ \begin{array}{lll}
c\prod_{h\in I_1}(a_j-a_{h})^{r_h}=1 & \text{for every } j\in I_2 \\
c\prod_{h\in I_1}(a_j-a_{h})^{r_{h}}=b_3 & \text{for every } j\in I_3 \\
\ldots & \ldots \\
c\prod_{h\in I_1}(a_j-a_{h})^{r_h}=b_n & \text{for every } j\in I_n
\end{array} \right . \end{equation}
where $c= \frac 1{\prod_{h\in I_1}(a_j-a_{h})^{r_{h}}} $, for $j$ any fixed element of $I_2$: in fact $c$ could be computed from any of the equations \eqref{varietypol} and plugged into the other equations.
\end{itemize}
Equations \eqref{varietypol} provide polynomial equations in the coordinates $(a_1,\ldots ,a_m)$, therefore they define an affine variety in such coordinates.
This affine variety depends on the choice of the partition $\{ I_1,\ldots ,I_n\} $ and of the integers $r_{1},\ldots ,r_{\ell }$.
Since there are finitely many such choices, $V_{\gamma }$ is a finite union of affine varieties, therefore an affine variety itself.
\end{proof}

To compute the dimension of $ \mathcal A_m(\Theta )$ we start with the following.

\begin{lemma} \label{020820221452}
Let $n,m,\gamma $ satisfy \eqref{conditiong} and fix $B=\{b_1,\ldots ,b_n\}\in \mathcal P_n$.
Let $ \mathcal U $ be the collection of all $(a_1,\ldots ,a_m,c_0,\ldots ,c_{\gamma })\in D(f_m)\times k^{\gamma +1}$ such that $ \{ a_1,\ldots ,a_m\} =P^{-1}(B)$ where $P(X)=\sum_{j=0}^{\gamma }c_jX^j$.
Then $ \mathcal U $ is a subvariety of $D(f_m)\times k^{\gamma +1}$ of dimension $\gamma +1-(n\gamma -m)$.
\end{lemma}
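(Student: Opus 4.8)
The plan is to (1) realise $\mathcal U$ as a Zariski‑closed subset of $D(f_m)\times k^{\gamma+1}$ defined by explicit polynomial conditions; (2) stratify $\mathcal U$ according to the combinatorics of how the $a_i$ distribute among the fibres of $P$ over $B$, recover $P$ polynomially from $(a_1,\dots,a_m,c_\gamma)$ on each stratum, and read off from the number of defining equations a lower bound for its dimension; (3) obtain the matching upper bound, and non‑emptiness of a top stratum, from a Jacobian computation — this is exactly where the enrichment of the Vandermonde matrix of section~\ref{310720221554} enters. For (1): writing $P(X)=\sum_jc_jX^j$, the condition $\{a_1,\dots,a_m\}=P^{-1}(B)$ is the conjunction of the $m$ equations $\prod_{b\in B}(P(a_i)-b)=0$ (polynomial in the $a_i,c_j$; they say $\{a_1,\dots,a_m\}\subseteq P^{-1}(B)$) and of the divisibility $\prod_{b\in B}(P(X)-b)\mid\bigl(\prod_{i=1}^m(X-a_i)\bigr)^{n\gamma}$; since the divisor is monic this divisibility is the vanishing of the coefficients of a polynomial remainder, hence again polynomial in the $a_i,c_j$, and it expresses $P^{-1}(B)\subseteq\{a_1,\dots,a_m\}$ because on $D(f_m)$ the polynomial $\prod_i(X-a_i)$ is squarefree with zero set exactly $\{a_1,\dots,a_m\}$. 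So $\mathcal U$ is a subvariety. From now on we work on the relevant locus $c_\gamma\neq0$, i.e.\ $\deg P=\gamma$.

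For (2): to a point of $\mathcal U$ attach the ordered partition $\mathcal I=(I_1,\dots,I_n)$ of $\{1,\dots,m\}$ with $i\in I_j\iff P(a_i)=b_j$, together with the multiplicities $r_i\ge1$, which satisfy $\sum_{i\in I_j}r_i=\gamma$ for each $j$; there are finitely many such types $(\mathcal I,\vec r)$. On the stratum $\mathcal U_{\mathcal I,\vec r}$ one has $P(X)-b_j=c_\gamma\prod_{i\in I_j}(X-a_i)^{r_i}$ for every $j$, so $c_0,\dots,c_{\gamma-1}$ are polynomial functions of $(a_1,\dots,a_m,c_\gamma)$ and $\mathcal U_{\mathcal I,\vec r}$ is isomorphic to the subset of $D(f_m)\times(k\setminus\{0\})$ defined by the $n-1$ polynomial identities $c_\gamma\prod_{i\in I_j}(X-a_i)^{r_i}+b_j=c_\gamma\prod_{i\in I_1}(X-a_i)^{r_i}+b_1$, $j=2,\dots,n$. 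As the two sides share the leading term $c_\gamma X^\gamma$, each identity amounts to $\gamma$ scalar equations (vanishing of the coefficients of $X^0,\dots,X^{\gamma-1}$ of the difference), so $\mathcal U_{\mathcal I,\vec r}$ is cut out by $(n-1)\gamma$ equations inside the $(m+1)$-dimensional variety $D(f_m)\times(k\setminus\{0\})$; by the dimension bound for successive intersections with hypersurfaces, every non‑empty component of $\mathcal U_{\mathcal I,\vec r}$ has dimension $\ge(m+1)-(n-1)\gamma=\gamma+1-(n\gamma-m)$.

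For (3): let $\Psi$ be the map $D(f_m)\times(k\setminus\{0\})\to k^{(n-1)\gamma}$ whose zero locus is $\mathcal U_{\mathcal I,\vec r}$. Differentiating the $n-1$ identities at a point of $\mathcal U_{\mathcal I,\vec r}$, and using that $(X-a_i)$ divides $P-b_j$ for $i\in I_j$, one finds that $d\Psi$ sends $\delta a_i$ (for $i\in I_j$, $j\ge2$) to $-r_i\,\delta a_i\cdot\frac{P-b_j}{X-a_i}$ in the $j$-th block, sends $\delta a_i$ (for $i\in I_1$) to $r_i\,\delta a_i\cdot\frac{P-b_1}{X-a_i}$ in every block, and sends $\delta c_\gamma$ to the constant $\frac{b_1-b_j}{c_\gamma}$ in the $j$-th block. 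Expanding $\frac{P-b_j}{X-a_i}=c_\gamma(X-a_i)^{r_i-1}\prod_{i'\in I_j\setminus\{i\}}(X-a_{i'})^{r_{i'}}$ in the monomial basis $1,X,\dots,X^{\gamma-1}$ exhibits the matrix of $d\Psi$, after obvious column rescalings, as the enrichment of the Vandermonde matrix attached to the pairwise distinct nodes $a_1,\dots,a_m$ — the multiplicities $r_i$ producing the confluent rows and the $I_1$-variables together with $\delta c_\gamma$ producing the extra rows. Section~\ref{310720221554} shows that this matrix has full rank $(n-1)\gamma$, using only that the $a_i$ are pairwise distinct, i.e.\ that we are over $D(f_m)$; hence $\Psi$ is a submersion along $\mathcal U_{\mathcal I,\vec r}$, so every non‑empty stratum is smooth of pure dimension $\gamma+1-(n\gamma-m)$.

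It remains to exhibit one non‑empty stratum (this also makes $\mathcal U$ non‑empty): take the type in which $P-b_j$ has $\gamma$ simple roots for $j\ge2$ and $P-b_1$ has a single root of multiplicity $n\gamma-m+1$ together with $m-(n-1)\gamma-1$ simple roots — which requires $m\ge(n-1)\gamma+1$, true by \eqref{conditionginv} — and check that a generic choice of the $\gamma+1-(n\gamma-m)$ free parameters keeps the $a_i$ distinct, giving a point of $\mathcal U_{\mathcal I,\vec r}$. Maximising over the finitely many strata yields $\dim\mathcal U=\gamma+1-(n\gamma-m)$. The main obstacle is precisely the full‑rank statement, i.e.\ the non‑singularity of the enriched Vandermonde matrix; this is why it is isolated as the content of section~\ref{310720221554}, and everything else here is the reduction to combinatorial types, the recovery of $P$ from $(a,c_\gamma)$, and a routine dimension count.
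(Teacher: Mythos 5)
Your overall architecture is sound and close to the paper's: stratify by the combinatorial type $(\mathcal I,\vec r)$, check non-emptiness of one stratum, and pin down the dimension by a Jacobian computation. Steps (1), (2) and the non-emptiness argument are fine. The genuine gap is in step (3), which is the only hard point of the lemma. After you eliminate $c_0,\ldots,c_{\gamma-1}$, the columns of $d\Psi$ are the coefficient vectors of the polynomials $\pm r_i\,c_\gamma(X-a_i)^{r_i-1}\prod_{i'\neq i}(X-a_{i'})^{r_{i'}}$, i.e.\ (up to sign) elementary symmetric functions of \emph{all} the nodes of a block; the rows of an enriched Vandermonde matrix in the sense of definition \ref{040820221656} are instead derivatives of the single row $(1,a_\ell,a_\ell^2,\ldots)$, i.e.\ Hermite evaluation functionals at one node. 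No column rescaling relates the two, and the formats do not even match: your matrix is $(n-1)\gamma\times(m+1)$, whereas an enriched Vandermonde matrix attached to the nodes $a_1,\ldots,a_m$ with multiplicities $r_i$ would have $\sum_ir_i=n\gamma$ rows. So the appeal to section \ref{310720221554} does not apply to your matrix, and the full-rank claim --- which carries the entire upper bound --- is left unproved.

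The rank claim is in fact true, but in your presentation it needs a different argument. Writing $g_j=\prod_{i\in I_j}(X-a_i)^{r_i-1}$ and letting $k[X]_{<d}$ denote the polynomials of degree less than $d$, Lagrange interpolation shows that the span $V_j$ of $\bigl\{\tfrac{P-b_j}{X-a_i}\bigr\}_{i\in I_j}$ equals $g_j\cdot k[X]_{<\ell_j}$, so that $k[X]_{<\gamma}/V_j\cong k[X]/(g_j)$; by the Chinese remainder theorem (the $g_j$ are pairwise coprime since the $a_i$ are distinct), surjectivity of $d\Psi$ reduces to the image of $g_1\cdot k[X]_{<\ell_1}$ filling $k[X]/\bigl(\prod_{j\geq 2}g_j\bigr)$, which holds because $g_1$ is invertible modulo $\prod_{j\geq 2}g_j$ and $\ell_1\geq\deg\prod_{j\geq 2}g_j=(n-1)\gamma-(m-\ell_1)$ by \eqref{conditionginv}. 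The paper avoids this issue altogether by keeping $c_0,\ldots,c_\gamma$ as coordinates and presenting the stratum by the Hermite-type equations $P(a_i)=b_{j}$, $P'(a_i)=\cdots=P^{(s_i)}(a_i)=0$: in that presentation the $\partial/\partial c_l$ block of the Jacobian literally is an enriched Vandermonde matrix, and the $\partial/\partial a_i$ block is handled by the single nonvanishing entries $P^{(s_i+1)}(a_i)$. If you want to quote section \ref{310720221554}, you must use that presentation; otherwise, supply an argument such as the one above.
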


\begin{proof}
Denote $r=n\gamma -m$, so that $0\le r\le\gamma -1$ by \eqref{conditionginv}.

Consider the following condition on a tuple $(a_1,\ldots ,a_m)\in D(f_m)$ and a polynomial $P$ of degree at most $\gamma $:
\begin{itemize}
\item[$(*)$] There exist:
\begin{itemize}
\item a sequence of positive integers $ \vec \ell =(\ell_1,\ldots ,\ell_n)$ such that $\ell_1+\ldots +\ell_n=m$,
\item and a sequence of natural numbers $ \vec s =(s_{1},\ldots ,s_m)$ such that
\begin{equation} \label{12102022}
\left \{ \begin{array}{l}
s_1+s_2+\ldots +s_{\ell_1}+\ell_1=\gamma \\
s_{\ell_1+1}+s_{\ell_1+2}+\ldots +s_{\ell_1+\ell_2}+\ell_2=\gamma \\
\ldots \\
s_{\ell_1+\ldots +\ell_{n-1}+1}+s_{\ell_1+\ldots +\ell_{n-1}+2}+\ldots +s_m+\ell_n=\gamma
\end{array} \right . \end{equation}
(which implies in particular that $s_1+\ldots +s_m=r$)
\end{itemize}
satisfying
\begin{equation} \label{31072022949}
\left \{ \begin{array}{l}
P(a_{1})=\ldots =P(a_{\ell_1})=b_{1} \\
P'(a_{1})=P''(a_{1})=\ldots =P^{(s_{1})}(a_{1})=0 \\
\ldots \\
P'(a_{\ell_1})=\ldots =P^{(s_{\ell_1})}(a_{\ell_1})=0 \\
\hline \\
P(a_{\ell_1+1})=\ldots =P(a_{\ell_1+\ell_2})=b_{2} \\
P'(a_{\ell_1+1})=\ldots =P^{(s_{\ell_1+1})}(a_{\ell_1+1})=0 \\
\ldots \\
P'(a_{\ell_1+\ell_2})=\ldots =P^{(s_{\ell_1+\ell_2})}(a_{\ell_1+\ell_2})=0 \\
\hline \\
\ldots \\
\hline \\
P(a_{\ell_1+\ldots +\ell_{n-1}+1})=P(a_{\ell_1+\ldots +\ell_{n-1}+2})=\ldots =P(a_m)=b_n \\
P'(a_{\ell_1+\ldots +\ell_{n-1}+1})=\ldots =P^{(s_{\ell_1+\ldots +\ell_{n-1}+1})}(a_{\ell_1+\ldots +\ell_{n-1}+1})=0 \\
\ldots \\
P'(a_m)=\ldots =P^{(s_m)}(a_m)=0
\end{array} \right .
.
\end{equation}
\end{itemize}

System \eqref{31072022949} is composed of $n$ blocks, one for each element of $B$.
Notice that $n\ge 2$ implies that $P$ is not a constant polynomial.
Then the first block describes the preimages of $b_1$, saying that $a_{1}$ is a root of the polynomial $P(X)-b_{1}$ of multiplicity at least $s_{1}+1$,\ldots , $a_{\ell_1}$ is a root of the polynomial $P(X)-b_{1}$ of multiplicity at least $s_{\ell_1}+1$; the other blocks have similar meanings.
By \eqref{12102022} this implies that the multiplicity of each $a_j$ as root of the corresponding polynomial $P(X)-b_{i_j}$ is exactly $s_j+1$.
Therefore from $(*)$ and $\deg (P)\le\gamma $ it also follows that $\deg (P)=\gamma $ and $\{ a_1,\ldots ,a_m\} =P^{-1}(B)$.
Conversely, if $a_1,\ldots ,a_n,P$ are such that $\{ a_1,\ldots ,a_n\} =P^{-1}(B)$, for distinct $a_1,\ldots ,a_n$, then there exists $\sigma\in Sym_n$ such that $(a_{\sigma (1)},\ldots ,a_{\sigma (n)},P)$ satisfies \eqref{31072022949}.

Equations \eqref{31072022949} define a subvariety $ \mathcal U_{ \vec \ell \vec s }$ ---depending on the choice of the sequences $ \vec \ell , \vec s $--- of $D(f_m)\times k^{\gamma +1}$ in the coordinates $(a_1,\ldots ,a_m,c_0,\ldots ,c_{\gamma })$, where $c_j$ is the coefficient of degree $j$ of $P$.
Then $ \mathcal U $ is the union of the images under all permutations of the variables $(a_1,\ldots ,a_n)$ of such varieties $ \mathcal U_{ \vec \ell \vec s }$.
Since such a union is finite, $ \mathcal U $ is a subvariety of $D(f_m)\times k^{\gamma +1}$ whose dimension is the maximum of the dimensions of all $ \mathcal U_{ \vec \ell \vec s }$.
Therefore it is enough to establish the following two facts:
\begin{itemize}
\item[(a)] some $ \mathcal U_{ \vec \ell \vec s }$ is non-empty;
\item[(b)] if $ \mathcal U_{ \vec \ell \vec s }$ is non-empty, then  $\dim ( \mathcal U_{ \vec \ell \vec s })=\gamma +1-r$.
\end{itemize}

For part (a) it is enough to show that there exists at least a polynomial $P$ of degree $\gamma $ such that $ \card (P^{-1}(B))=m$.
Pick $b\in B$ and consider the polynomials $P_{\mu }(X)=\mu X^{r+1}(X-1)(X-2)\cdot\ldots\cdot (X-(\gamma -r-1))+b$, for $\mu\in k\setminus\{ 0\} $.
Notice that the set $S$ of roots of the derivative $P'_{\mu }$ does not depend on $\mu $, and $0\in S$ if and only if $r>0$.
Also, $ \card (P_{\mu }^{-1}(B))\le m$, while $ \card (P_{\mu }^{-1}(B))=m$ if and only if $\forall a\in S\setminus\{ 0\} ,P_{\mu }(a)\notin B$: this last fact holds for all but finitely many values of $\mu $.

We establish (b) by computing the Jacobian matrix of \eqref{31072022949} and showing that it is non-singular at every point of $ \mathcal U_{ \vec \ell \vec s }$: since this Jacobian is a $(m+r)\times (m+\gamma +1)$ matrix, it follows that indeed $\dim ( \mathcal U_{ \vec \ell \vec s })=m+\gamma +1-(m+r)=\gamma +1-r$.

System \eqref{31072022949} means that
\begin{equation} \label{07092022}
\left \{ \begin{array}{lcl}
c_0+c_1a_{j}+c_2a_{j}^2+c_3a_{j}^3+\ldots +c_{\gamma -1}a_{j}^{\gamma -1}+c_{\gamma }a_{j}^{\gamma } & = & b_{i_j} \\
c_1+2c_2a_{j}+3c_3a_{j}^2+\ldots +(\gamma -1)c_{\gamma -1}a_{j}^{\gamma -2}+\gamma c_{\gamma }a_{j}^{\gamma -1} & = & 0 \\
2c_2+6c_3a_{j}+\ldots +(\gamma -2)(\gamma -1)c_{\gamma -1}a_{j}^{\gamma -3}+(\gamma -1)\gamma c_{\gamma }a_{j}^{\gamma -2} & = & 0 \\
\ldots & & \\
s_{j}!c_{s_{j}}+\ldots +(\gamma -s_{j})\cdot\ldots\cdot (\gamma -1)c_{\gamma -1}a_{j}^{\gamma -s_{j}-1}+(\gamma -s_{j}+1)\cdot\ldots\cdot\gamma c_{\gamma }a_{j}^{\gamma -s_{j}} & = & 0
\end{array} \right . \end{equation}
for $j\in\{ 1,\ldots ,m\} $, where $b_{i_j}$ is the image of $a_j$ under $P$ in \eqref{31072022949}.
The part of the Jacobian of \eqref{31072022949} corresponding to this block of equations is a $(s_j+1)\times(m+\gamma +1)$ submatrix; the first $m$ columns, containing the partial derivatives with respect to the variables $(a_1,\ldots ,a_m)$ are
\begin{equation} \label{04092022}
\left ( \begin{matrix}
0 & \ldots & 0 & P'(a_j) & 0 & \ldots & 0 \\
0 & \ldots & 0 & P''(a_j) & 0 & \ldots & 0 \\
\ldots & \ldots & \ldots & \ldots & \ldots & \ldots & \ldots \\
0 & \ldots & 0 & P^{(s_j+1)}(a_j) & 0 & \ldots & 0
\end{matrix} \right ) \end{equation}
while the last $\gamma +1$ columns, containing the partial derivatives with respect to the variables $(c_0,\ldots ,c_{\gamma })$ are
\begin{equation} \label{04082022}
\left ( \begin{matrix}
1 & a_{j} & a_{j}^2 & a_{j}^3 & \ldots & a_{j}^{\gamma -1} & a_{j}^{\gamma } \\
0 & 1 & 2a_{j} & 3a_{j}^2 & \ldots & (\gamma -1)a_{j}^{\gamma -2} & \gamma a_{j}^{\gamma -1} \\
0 & 0 & 2 & 6a_{j} & \ldots & (\gamma -2)(\gamma -1)a_{j}^{\gamma -3} & (\gamma -1)\gamma a_{j}^{\gamma -2} \\
\ldots & \ldots & \ldots & \ldots & \ldots & \ldots & \ldots \\
0 & 0 & 0 & 0 & \ldots & (\gamma -s_{j})\cdot\ldots\cdot (\gamma -1)a_{j}^{\gamma -s_{j}-1} & (\gamma -s_{j}+1)\cdot\ldots\cdot\gamma a_{j}^{\gamma -s_{j}}
\end{matrix} \right )
.
\end{equation}
Note that each of the submatrices \eqref{04092022} has exactly one non-null entry when computed in a point of $ \mathcal U_{ \vec \ell \vec s }$, namely
\begin{equation} \label{040920221008}
P^{(s_j+1)}(a_j).
\end{equation}
Let $M$ be the $r\times (\gamma +1)$ matrix obtained from the Jacobian by erasing the rows and columns corresponding to the entries \eqref{040920221008}.
Then the proof is completed by the following claim, which will be established in section \ref{310720221554}.

\begin{claim}
Matrix $M$ is non-singular.
\end{claim}
\end{proof}

\begin{theorem} \label{030820221312}
Let $n,m$ satify \eqref{conditiong} and let $\Theta\in\Sigma_n$.
Let $m=en+t$, where $e$ is the integer part of $ \frac mn $.
Then
\[
\dim ( \mathcal A_m(\Theta ))= \begin{cases}
e-1 & \text{if } t=0 \\
e-n+t & \text{if } t>0
\end{cases}
.
\]
\end{theorem}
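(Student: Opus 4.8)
The plan is to transfer the computation, through the morphism $\theta'\theta :D(f_m)\to\Sigma_m$, down to the variety $ \mathcal U $ of lemma \ref{020820221452}, whose dimension is already available. Since the union in \eqref{03112022} is finite, $\dim( \mathcal A_m(\Theta ))=\max_{\gamma }\dim( \mathcal A_m^{\gamma }(\Theta ))$ with $\gamma $ ranging over the integers satisfying \eqref{conditiong}; so I would compute each $\dim( \mathcal A_m^{\gamma }(\Theta ))$ and then take the maximum. Note that for each admissible $\gamma $ the set $ \mathcal A_m^{\gamma }(\Theta )$ is non-empty, by part (a) of the proof of lemma \ref{020820221452}, so the maximum is over a non-empty family whenever the hypotheses hold.

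Fix such a $\gamma $ and choose $B=\{ 0,1,b_3,\ldots ,b_n\}\in\Theta $; let $ \mathcal V_{\gamma }=(\theta'\theta )^{-1}( \mathcal A_m^{\gamma }(\Theta ))$ be as in \eqref{290720221556}. By the proof of theorem \ref{bidimvar}, $ \mathcal V_{\gamma }$ is a subvariety of $D(f_m)$; moreover every fiber of the restriction of $\theta'\theta $ to $ \mathcal V_{\gamma }$ is a full fiber of $\theta'\theta $, hence has dimension $2$ by remark \ref{26102022}(1), so $\dim( \mathcal V_{\gamma })=\dim( \mathcal A_m^{\gamma }(\Theta ))+2$ by \cite[corollary IV.3.8(2)]{perrin2008}. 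Next, let $ \mathcal U $ be the variety of lemma \ref{020820221452} attached to this $B$, and let $p: \mathcal U \to D(f_m)$ be the projection forgetting the coordinates $c_0,\ldots ,c_{\gamma }$. A point of $ \mathcal U $ forces its associated polynomial $P$ to have degree exactly $\gamma $ (as observed in the proof of lemma \ref{020820221452}), so $p( \mathcal U )\subseteq \mathcal V_{\gamma }$; conversely each point of $ \mathcal V_{\gamma }$ lifts into $ \mathcal U $, by adjoining the coefficient vector of a degree-$\gamma $ polynomial reducing the corresponding $m$-element set to $B$. Thus $p( \mathcal U )= \mathcal V_{\gamma }$, and since the fibers of $p$ are finite by lemma \ref{040920221508}, $\dim( \mathcal V_{\gamma })=\dim( \mathcal U )$, which by lemma \ref{020820221452} equals $\gamma +1-(n\gamma -m)=m+1-\gamma (n-1)$. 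Hence $\dim( \mathcal A_m^{\gamma }(\Theta ))=m-1-\gamma (n-1)$.

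Since $m-1-\gamma (n-1)$ is strictly decreasing in $\gamma $, the maximum is attained at the least admissible value $\gamma_0$, namely the least integer in $\left [ \frac mn , \frac{m-1}{n-1} \right ] $. Writing $m=en+t$ with $0\le t<n$: if $t=0$ then $ \frac mn =e$ is an integer and $e\le \frac{m-1}{n-1} $ (equivalent to $e\ge 1$, which holds since $m>n$), so $\gamma_0=e$ and $\dim( \mathcal A_m(\Theta ))=m-1-e(n-1)=e-1$; if $t>0$ then $e< \frac mn <e+1$, so $\gamma_0=e+1$, which lies in the interval exactly when $e+1\le \frac{m-1}{n-1} $, i.e. $t\ge n-e$, which is the non-vacuity condition \eqref{e} guaranteed by the hypothesis, and then $\dim( \mathcal A_m(\Theta ))=m-1-(e+1)(n-1)=e-n+t$.

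The two dimension reductions and the closing arithmetic are routine; the substantive input is lemma \ref{020820221452} (together with the matrix non-singularity claim it depends on), which is taken as given here. The points that will require some care are: checking that $p( \mathcal U )$ coincides with $ \mathcal V_{\gamma }$ on the nose and not merely densely ---which works because a reducing polynomial of the prescribed degree exists above every point of $ \mathcal V_{\gamma }$ by the very definition of $ \mathcal V_{\gamma }$--- and the standard but non-trivial fact that a surjective morphism of varieties with finite fibers preserves dimension, applied componentwise through the fiber-dimension theorem.
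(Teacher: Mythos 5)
Your proposal is correct and follows essentially the same route as the paper's own proof: reduce to the varieties $\mathcal V_{\gamma}=(\theta'\theta )^{-1}(\mathcal A_m^{\gamma }(\Theta ))$, identify them (up to a finite-to-one projection, via lemma \ref{040920221508}) with the variety $\mathcal U$ of lemma \ref{020820221452}, subtract $2$ for the fibers of $\theta'\theta$, and maximize $m+1-\gamma (n-1)$ over the admissible $\gamma$, which is attained at the least such $\gamma$ ($e$ or $e+1$ according to whether $t=0$ or $t>0$). The arithmetic and the non-vacuity check via \eqref{e} match the paper's.
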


Notice that in the second case indeed $e-n+t\ge 0$ by \eqref{e}, since if $m\in [\gamma (n-1)+1,\gamma n]$, then $e<\gamma $, so $e-n+t=e-n+m-en=m-n-(n-1)e\ge\gamma (n-1)+1-n-(n-1)(\gamma -1)=0$.

\begin{proof}[Proof of theorem \ref{030820221312}]
Let $B=\{ b_1,\ldots ,b_n\}\in\Theta $.
Let $ \mathcal A'(\gamma )$ be the set of all $(a_1,\ldots ,a_m)\in D(f_m)$ such that $\{ a_1,\ldots ,a_m\} =P^{-1}(B)$ for some polynomial $P$ of degree $\gamma $.
By lemma \ref{020820221452}, $ \mathcal U = \mathcal U (\gamma )=\{ (a_1,\ldots ,a_m,c_0,\ldots ,c_{\gamma })\in D(f_m)\times k^{\gamma +1}\mid\{ a_1,\ldots ,a_m\} =P^{-1}(B), \text{ where } P=\sum_{j=1}^{\gamma }c_jX^j\} $ is a subvariety of $D(f_m)\times k^{\gamma +1}$ whose projection onto the first $m$ coordinates is $ \mathcal A'(\gamma )$.
By lemma \ref{040920221508}, for every $(a_1,\ldots ,a_m)\in \mathcal A'(\gamma )$ there are only finitely many $(c_0,\ldots ,c_{\gamma })\in k^{\gamma +1}$ such that $(a _1,\ldots ,a_m,c_0,\ldots c_{\gamma })\in \mathcal U (\gamma )$, which implies that $\dim ( \mathcal A'(\gamma ))=\gamma +1-(n\gamma -m)$ by lemma \ref{020820221452}.
Since $\dim ( \mathcal A_m(\Theta ))=\max\{\dim ( \mathcal A_m^{\gamma }(\Theta ))\}_{\gamma\in \left [
\frac mn , \frac{m-1}{n-1}
\right ]
\cap \N }$ and $\dim ( \mathcal A_m^{\gamma }(\Theta ))=\dim ( \mathcal A'(\gamma ))-2$, it is enough to show that
\[
\max\{\dim ( \mathcal A'(\gamma ))\}_{\gamma\in \left [
\frac mn , \frac{m-1}{n-1}
\right ]
\cap \N } = \begin{cases}
e+1 & \text{if } t=0 \\
e-n+t+2 & \text{if } t>0
\end{cases}
.
\]

{\sl Case 1.} $t=0$.

The biggest value of $\gamma +1-(n\gamma -m)$, in the allowed range \eqref{conditiong}, is obtained for $\gamma =e$, yielding $\dim ( \mathcal A'(e))=e+1$.

{\sl Case 2.} $t>0$.

Since \eqref{conditiong} implies $ \frac{ne+t}n =e+ \frac tn \le\gamma $, the biggest value of $\gamma +1-(n\gamma -m)$ is obtained for $\gamma =e+1$, yielding $\dim ( \mathcal A'(e+1))=e+2-(n(e+1)-m)=e-n+t+2$.
\end{proof}

\begin{theorem} \label{twontwo}
Let $B\in \mathcal P_n$.
\begin{enumerate}
\item If there exists a quadratic polynomial reducing $A$ to $B$, then $A$ is an exceptional set and either $[A]\in \mathcal E_{2n-1}^*$ or $[A]\in \mathcal E_{2n}^*$, that is, the group $G_A$ has even order.
\item There is a unique $\Theta\in\Sigma_{2n-1}$ such that $\Theta \pol [B]$.
Conversely, if $\Theta\in \mathcal E_{2n-1}^*$, then there exists a unique $\Xi\in\Sigma_n$ such that $\Theta \pol \Xi $.
\item Let $ \mathcal A $ be the family of all $[A]\in\Sigma_{2n}$ such that there exists a quadratic polynomial reducing $A$ to $B$.
Then $\dim ( \mathcal A )=1$.
Conversely, if $\Theta\in \mathcal E_{2n}^*$, then there exists $\Xi\in\Sigma_n$ and a quadratic polynomial $P$ reducing some element of $\Theta $ to some element of $\Xi $.
\end{enumerate}
\end{theorem}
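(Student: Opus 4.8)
The plan is to exploit that a quadratic polynomial $P$ carries a canonical involution: writing $P$ in vertex form $P(X)=c(X-v)^2+d$, the linear map $\tau\colon x\mapsto 2v-x$ satisfies $P\circ\tau=P$, so $\tau$ permutes the fibres of $P$. Hence if $A=P^{-1}(B)$ then $\tau(A)=A$, and since by \eqref{conditionginv} a quadratic reduction forces $|A|\in\{2n-1,2n\}$, in particular $|A|\ge 3$, the restriction $\tau|_A$ is not the identity. As $b+\tau(b)=2v$ is constant on $A$, corollary \ref{witntwo} identifies $\tau|_A$ with a characteristic permutation of $A$ of order $2$, so $A$ is exceptional and $G_A$ has even order; together with $|A|\in\{2n-1,2n\}$ this gives $[A]\in\mathcal E_{2n-1}^*\cup\mathcal E_{2n}^*$, which is (1). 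The same counting — $\tau|_A$ can fix only the point $v$, and an involution of a finite set of odd, resp. even, cardinality has an odd, resp. even, number of fixed points — shows $v\in A$ precisely when $|A|=2n-1$, a fact I use repeatedly below.

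For the converse statements in (2) and (3) I would run the argument backwards. Let $\Theta=[A]$ be exceptional with $G_A$ of even order. By proposition \ref{corabel} $G_A$ is cyclic, hence has a unique element $\tau$ of order $2$; $\tau$ is characteristic, so by corollary \ref{witntwo} there is $v$ with $\tau(x)=2v-x$ on $A$. Put $P(X)=(X-v)^2$: it is constant on $\tau$-orbits and injective on a transversal, so $B:=P(A)$ satisfies $P^{-1}(B)=A$, i.e. $A\pol B$ via a quadratic. If $|A|=2n-1$ then $v\in A$ and $|B|=1+(|A|-1)/2=n$; if $|A|=2n$ then $v\notin A$, $\tau|_A$ is fixed-point-free, and $|B|=|A|/2=n$. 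Either way $\Xi:=[B]\in\Sigma_n$ and $\Theta\pol\Xi$, which is the converse in (3) and the existence half of the converse in (2). For the uniqueness of $\Xi$ in (2): if $Q$ is any quadratic with $Q^{-1}(B')=A$ and $|B'|=n$ (degree $2$ being the only admissible one, by \eqref{conditiong}), its associated involution $\sigma|_A$ lies in $G_A$ and is not the identity since $|A|\ge 3$, hence $\sigma|_A=\tau|_A$ and $Q$ has vertex $v$; thus $Q=L\circ P$ for some $L\in\mathcal L$, whence $B'=L(B)$ and $[B']=[B]$.

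The forward half of (2) and the dimension count in (3) I would obtain from the variety machinery of the preceding subsections. For (3) the only degree admissible for a reduction $\mathcal P_{2n}\to\mathcal P_n$ is $\gamma=2$, so $\mathcal A=\mathcal A_{2n}([B])$; writing $2n=en+t$ with $e=2$ and $t=0$, theorem \ref{030820221312} yields $\dim\mathcal A=e-1=1$. For the forward half of (2), the set $\{\Theta\in\Sigma_{2n-1}\mid\Theta\pol[B]\}$ is non-empty — take $P(X)=(X-v)^2+b_1$ for $B=\{b_1,\dots,b_n\}$, which has exactly $2n-1$ distinct preimages of $B$ — and theorem \ref{030820221312} with $m=2n-1$ (so $e=1$, $t=n-1$) gives it dimension $e-n+t=0$; being a $0$-dimensional affine variety, it is finite.

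The genuinely delicate point I expect is upgrading "finite" to "a single class" in (2), i.e. showing that any two $(2n-1)$-element sets reducing quadratically to $B$ are polynomially bireducible. My strategy is to normalise via lemma \ref{lemmaprivileged} so that every such set has the shape $\{v\}\cup\{\,v\pm\sqrt{(b_j-b_{i_0})/c}\,\}_{j\ne i_0}$ — the common barycentre $v$ together with $n-1$ regular $2$-gons, matching theorem \ref{chrexcsets}(b) with $r=2$ — and then to pin the set down up to the action $\alpha$; the crux is to verify that the $\mathcal L$-orbit does not depend on which element $b_{i_0}\in B$ plays the role of the critical value, and this is where I would expect to need an honest computation with the differences $b_j-b_{i_0}$ (and their square roots) rather than a soft argument.
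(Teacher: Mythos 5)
Your treatment of (1), of the dimension count in (3), and of both converse statements is correct and in substance matches the paper's proof. Your version of (1), via the vertex form and the fibre-preserving involution $x\mapsto 2v-x$, is in fact cleaner than the paper's, which splits into the cases $\card (A)=2n-1$ and $\card (A)=2n$, normalises $P$ to $X^2$ resp.\ $cX(X-1)$ by lemma \ref{lemmaprivileged}, and only then produces the same involution via corollary \ref{witntwo}. Your uniqueness argument for $\Xi $ in the converse of (2) (the unique order-$2$ element of the cyclic group $G_A$ forces the vertex of any admissible quadratic, so any two such quadratics differ by postcomposition with a linear map) is a sound alternative to the paper's barycentre computation.

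The gap is exactly where you located it, and it cannot be closed: the $\mathcal L $-orbit of $P^{-1}(B)$ genuinely depends on which $b_{i_0}\in B$ plays the role of the critical value, so the ``honest computation'' you defer refutes rather than confirms the uniqueness asserted in the first sentence of (2). Take $n=3$ and $B=\{ 0,1,2\} $. With critical value $0$, $P(X)=X^2$ gives $A_1=\{ 0,\pm 1,\pm\sqrt 2\} $; with critical value $1$, $P(X)=X^2+1$ gives $A_2=\{ 0,\pm 1,\pm i\} $ (it sends $\pm i\mapsto 0$, $0\mapsto 1$, $\pm 1\mapsto 2$). Both are $5$-element sets reduced to $B$ by a quadratic, yet $[A_1]\ne [A_2]$: both sets have barycentre $0$, so a linear bijection between them would have the form $x\mapsto cx$, and then $\sum_{a\in A_2}a^2=c^2\sum_{a\in A_1}a^2$ would force $0=6c^2$. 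In the language of your sketch, $\{ b-b_{i_0}\}_{b\ne b_{i_0}}$ is $\{ 1,2\} $ for $b_{i_0}=0$ and $\{ -1,1\} $ for $b_{i_0}=1$, and these are not proportional. For the record, the paper's own proof of this clause has the same defect: it normalises so that the critical value is $0\in B$ and $P(1)=1$, concludes $P(X)=X^2$ and $A=\{ 0,\pm 1,\pm a_3,\ldots ,\pm a_n\} $, and declares $[A]$ unique, but the normalised pair depends on which element of the original $B$ was critical. Only finiteness of $\{\Theta\in\Sigma_{2n-1}\mid\Theta \pol [B]\} $ (which you correctly extract from theorem \ref{030820221312}) is actually established; the other clauses of the theorem, including the uniqueness of $\Xi $ in (2), are unaffected.
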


Note that in case (3) of theorem \ref{twontwo}, if $n>2$ the condition that the polynomials be quadratic is redundant, since it is implied by \eqref{conditiong}; so $ \mathcal A = \mathcal A_{2n}([B])$.
For $n=2$, a polynomial reducing an element of $ \mathcal P_{2n}$ to an element of $ \mathcal P_n$ may have degree $3$.

\begin{proof}[Proof of theorem \ref{twontwo}]
Let $P$ be a quadratic polynomial reducing $A$ to $B$.
We distinguish two cases, according to whether $A\in \mathcal P_{2n-1}$ or $A\in \mathcal P_{2n}$.

Let first $A\in \mathcal P_{2n-1}$.
In this case, there is a unique element in $B$ having exactly one preimage under $P$, while all other elements of $B$ have two preimages.
By applying lemma \ref{lemmaprivileged} it can be assumed that $B=\{ 0,1,b_3,\ldots ,b_n\} $, that $\{ 0,1\}\subseteq A$, that $0$ is a root of $P$ of multiplicity $2$, and that $P(1)=1$.
It follows that $P(X)=X^2$, so that $A=\{ 0,\pm 1,\pm a_3,\ldots ,\pm a_n\} $ where $a_j$ is a square root of $b_j$, establishing in particular the existence and uniqueness of $[A]$, that is the first part of (2).
Now apply corollary \ref{witntwo} to $\tau :A\to A$ defined by $\tau (a )=-a $ to conclude that $A$ is an exceptional set.
The fact that $A$ has a characteristic permutation of order $2$ proves that $ \card (G_A)$ is even.
Therefore (1) is established in this case.

We now apply this argument to show that the correspondence $[B]\mapsto [A]$ is injective, which gives unicity in the second part of (2).
Given $B=\{ 0,1,b_3,\ldots ,b_n\} ,B'=\{ 0,1,b'_3,\ldots ,b'_n\} $ and $A=\{ 0,\pm 1,\pm a_3,\ldots ,\pm a_n\} ,A'=\{ 0,\pm 1,\pm a'_3,\ldots ,\pm a'_n\} $ such that $A=P^{-1}(B),A'=P^{-1}(B')$ where $P(X)=X^2$, assume that $A \eqpol A'$.
Then let $Q$ be a linear polynomial such that $Q(A)=A'$.
Since $Q$ preserves barycentres, it follows that $Q(0)=0$, so that $Q(X)=cX$ for some $c\in k\setminus\{ 0\} $, whence $PQ(X)=c^2X^2$.
Therefore $B'=PQ(A)=\{ 0,c^2,c^2a_3^2,\ldots ,c^2a_n^2\} =R(B) \eqpol B$, where $R(X)=c^2X$.

Now let $A\in \mathcal P_{2n}$.
Then every element of $B$ has exactly two preimages under $P$.
By lemma \ref{lemmaprivileged} again, it can be assumed that:
\begin{itemize}
\item $A=\{ 0,1,a_3,\ldots ,a_{2n}\} $,
\item $B=\{ 0,1,b_3,\ldots ,b_n\} $,
\item $P(0)=P(1)=0$, $P(a_3)=P(a_4)=1$, and $P(a_{2j-1})=P(a_{2j})=b_j$ for $3\le j\le n$.
\end{itemize}
It follows that for some $c\in k\setminus\{ 0\} $:
\[ \left \{ \begin{array}{ll}
P(X)=cX(X-1) & \\
P(X)-1=c(X-a_3)(X-a_4) & \\
P(X)-b_j=c(X-a_{2j-1})(X-a_{2j}) & \text{for } 3\le j\le n
\end{array} \right .
.
\]
Then, from the equalities
\[
cX(X-1)=c(X-a_3)(X-a_4)+1=c(X-a_{2j-1})(X-a_{2j})+b_j,
\]
that is
\begin{equation} \label{eqpolys}
\begin{aligned}
cX^2-cX= & cX^2-c(a_3+a_4)X+ca_3a_4+1= \\
= & cX^2-c(a_{2j-1}+a_{2j})X+ca_{2j-1}a_{2j}+b_j,
\end{aligned}
\end{equation}
it follows that
\[
\forall j\in\{ 2,\ldots ,n\} ,a_{2j-1}+a_{2j}=1.
\]
Therefore, let $\tau :A\to A$ be defined by
\[
\tau (0)=1,\quad\tau (1)=0,\quad\tau (a_{2j-1})=a_{2j},\quad\tau (a_{2j})=a_{2j-1}
\]
for $j\in\{ 2,\ldots ,n\} $, and apply corollary \ref{witntwo} to obtain that $A$ is an exceptional set.
The fact that $\tau $ has order $2$ shows again that $ \card (G_A)$ is even.
This concludes the proof of (1).

The first part of (3) is a consequence of the proof of lemma \ref{020820221452}: when $\gamma =2,r=0$ it follows that $\dim ( \mathcal U_{ \vec \ell \vec s })=3$ for non-empty $ \mathcal U_{ \vec \ell \vec s }$, whence $\dim ( \mathcal A )=3-2=1$.

It remains to prove the existence in the second part of (2) and the second part of (3).
Let $A$ be an exceptional set such that $ \card (G_A)$ is even, say $ \card (G_A)=r$, and let $\tau $ be a generator of $G_A$, so that $\tau $ has order $r$.
Therefore, $\rho =\tau^{ \frac r2 }$ has order $2$.

Suppose that $A\in \mathcal P_{2n-1}$.
Using lemma \ref{lemmaprivileged}, it can be assumed that $0$ is the fixed point of $\rho $.
So $\forall a\in A,a+\rho (a)=0$ by corollary \ref{witntwo}, and it follows that $A=\{ 0,\pm a_2,\ldots ,\pm a_n\} $ for some $a_2,\ldots ,a_n$.
This implies that $A=P^{-1}(\{ 0,a_2^2,\ldots ,a_n^2\} )$ where $P(X)=X^2$.

Assume now that $A\in \mathcal P_{2n}$.
By lemma \ref{lemmaprivileged} it can be assumed that $\pm 1\in A$, with $\rho (\pm 1)=\mp 1$.
Since $\forall a\in A,a+\rho (a)=0$ by corollary \ref{witntwo}, it follows that $A=\{\pm 1,\pm a_2,\ldots ,\pm a_n\} $ for some $a_2,\ldots ,a_n$.
This implies that $A=P^{-1}(\{ 1,a_2^2,\ldots ,a_n^2\} )$ where $P(X)=X^2$.
\end{proof}

Theorem \ref{twontwo}(2) shows that using $ \pol $ the points of $\Sigma_n$ can be parameterised by the elements of $ \mathcal E_{2n-1}^*$.
The following proposition shows that no two elements of $\Sigma_n$ have a common predecessor in $\Sigma_{2n}$.
Therefore, by theorem \ref{twontwo}(3), the relation $ \pol $ determines a partition of $ \mathcal E_{2n}^*$ into one-dimensional subvarieties, one for each element of $\Sigma_n$.

\begin{proposition} \label{nincompatible}
Let $\Xi ,\Xi'\in\Sigma_n$ and $\Theta\in\Sigma_{2n}$ be such that both $\Theta \pol \Xi $ and $\Theta \pol \Xi'$ both hold.
Then $\Xi =\Xi'$.
\end{proposition}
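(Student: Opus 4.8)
The plan is to reduce immediately to the case $n\ge 3$, observe that any reduction of a $2n$-element set to an $n$-element set must be quadratic, and then show that a quadratic reduction forces its apex to be the barycentre of the domain set, which is therefore independent of the chosen reduction.

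First I would dispose of the trivial cases: if $n\le 2$ then $\Sigma_n$ is a single point by proposition~\ref{incrcard}, so $\Xi=\Xi'$ automatically. So assume $n\ge 3$ and fix representatives $A\in\Theta$, $B\in\Xi$, $B'\in\Xi'$ together with polynomials $P,P'$ such that $A=P^{-1}(B)$ and $A=(P')^{-1}(B')$. Applying \eqref{conditiong} with $m=2n$ gives $2=\frac{2n}{n}\le\deg P\le\frac{2n-1}{n-1}<3$, the strict inequality holding exactly because $n\ge 3$; hence $\deg P=2$, and likewise $\deg P'=2$. (This is the point to be careful about: for $n=2$ the degree could be $3$, which is one reason the small cases are treated apart --- although there the conclusion is immediate anyway.)

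Next I would use the elementary fact that a quadratic $P$ with apex $v$ satisfies $P(x)=P(y)$, for $x\ne y$, precisely when $x+y=2v$. Since $P$ is non-constant, $P(A)=B$ by remark~\ref{04042022732}(2); each fibre $P^{-1}(\{b\})$ with $b\in B$ is contained in $A$ and has at most two elements, and since these $n$ fibres partition the $2n$-element set $A$, each one has exactly two elements, which ---being the two roots of $P(X)-b$--- sum to $2v$. Summing over $b\in B$ gives $\sum_{a\in A}a=2nv$, and the same computation applied to $P'$, with apex $v'$, gives $\sum_{a\in A}a=2nv'$; as $k$ has characteristic $0$, this forces $v=v'$. (Alternatively, one can argue that $A$ is invariant under the reflections $x\mapsto 2v-x$ and $x\mapsto 2v'-x$, hence under the translation by $2(v'-v)$, which must be trivial since $A$ is finite and $k$ has characteristic $0$.)

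Finally, writing $P(X)=a_2(X-v)^2+w$ and $P'(X)=a_2'(X-v)^2+w'$ with $a_2,a_2'\ne 0$, we obtain $P'=L\circ P$ where $L$ is the linear polynomial $Y\mapsto\frac{a_2'}{a_2}(Y-w)+w'$, so $B'=P'(A)=L(P(A))=L(B)$ and therefore $B\eqpol B'$, that is $\Xi=\Xi'$. I do not anticipate any genuine obstacle: the substantive observations are that quadraticity is forced by the cardinality constraint \eqref{conditiong} and that the apex of a quadratic reduction is pinned to the barycentre of its domain set; everything else is routine.
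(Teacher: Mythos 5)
Your proof is correct. The overall strategy is the same as the paper's --- for $n\ge 3$ the bound \eqref{conditiong} forces both reductions to be quadratic, and one then shows that two quadratic reductions of the same $2n$-element set differ by post-composition with a linear polynomial --- but the key step is carried out differently. The paper normalises via lemma \ref{lemmaprivileged} so that both reductions vanish at $0$ and $1$, giving $P(X)=cX(X-1)$ and $Q(X)=c'X(X-1)$ and hence $B=R^{-1}(B')$ with $R(X)=\frac{c'}{c}X$; this presupposes that $0$ and $1$ can be arranged to lie in a common fibre of \emph{both} reductions simultaneously, which the paper leaves implicit (the second invocation of lemma \ref{lemmaprivileged} must not alter $A$, which has already been fixed). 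Your argument instead identifies the vertex of each quadratic intrinsically as the barycentre $\frac{1}{2n}\sum_{a\in A}a$ of $A$, using that all $n$ fibres are doubletons whose elements sum to twice the vertex; this is coordinate-free, avoids the simultaneous-normalisation issue entirely, and is in fact the same barycentre device the paper deploys inside the proof of theorem \ref{twontwo}(2). The conclusion is then identical in both proofs: $P'=L\circ P$ with $L$ linear, so $B'=L(B)$ and $\Xi=\Xi'$. Your handling of the small cases $n\le 2$ (where $\Sigma_n$ is a singleton) and your remark that $\deg P=3$ is possible only when $n=2$ are both accurate.
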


\begin{proof}
It can be assumed that $n>2$.
If $A\in\Theta ,B\in\Xi $, any polynomial $P$ reducing $A$ to $B$ is quadratic.
By lemma \ref{lemmaprivileged} it can be assumed that $A=\{ 0,1,a_3,\ldots ,a_{2n}\} ,B=\{ 0,1,b_3,\ldots ,b_n\} ,P(0)=P(1)=0$.
Therefore, $P(X)=cX(X-1)$ for some $c\in k\setminus\{ 0\} $.
Again by lemma \ref{lemmaprivileged}, let $B'=\{ 0,1,b'_3,\ldots ,b'_n\}\in\Xi'$ and $Q$ be a quadratic polynomial such that $A=Q^{-1}(B'),Q(0)=Q(1)=0$.
It follows that $Q(X)=c'X(X-1)$ for some $c'\in k\setminus\{ 0\} $.
Thus $B=R^{-1}(B')$, where $R(X)= \frac {c'}c X$, so $B \eqpol B'$.
\end{proof}

Theorem \ref{twontwo} and proposition \ref{nincompatible} say that two $ \eqpol $-incomparable sets with $n$ elements do not have a common lower bound with $2n-1$ or $2n$ elements.
This suggests the following.

\begin{question} \label{05092022}
Given non-empty $B,B'\in Fin$, when do they have a common lower bound other than $\emptyset ,k$? That is, when do there exist $A\in Fin\setminus\{\emptyset ,k\} $ such that $A \pol B$ and $A \pol B'$?
\end{question}

Examples of pairs of $ \eqpol $-incomparable sets having a common lower bound different from $\emptyset ,k$ can be constructed as follows.
Let $P(X),Q(X)$ be commuting polynomials, that is such that $P[Q(X)]=Q[P(X)]$.
Fix any $C\in Fin$ and let $B=P^{-1}(C),B'=Q^{-1}(C)$.
Then $A=Q^{-1}(B)=P^{-1}(B')$ is a common lower bound of $B,B'$.
There are indeed choices of $C,P(X),Q(X)$ so that $B,B'$ are $ \pol $-incomparable.
For example, for $j\ne j'$, let $P(X)=X^j,Q(X)=X^{j'}$, or let $P(X),Q(X)$ be the $j$-th and the $j'$-th Chebyshev polynomials, respectively: if $ \card (P^{-1}(C)), \card (Q^{-1}(C))$ do not satisfy \eqref{between}, then $B,B'$ are $ \pol $-incomparable.
For instance, starting with the one-element set $\{ 1\} $ and closing under the preimages of all powers $X^j$, one obtains the sublattice $ \mathcal R $ consisting of the sets of $j$-th roots of unity, for any $j$; notice that these are exceptional sets by theorem \ref{chrexcsets}.
On $ \mathcal R $, the relation $ \pol $ coincide with reverse inclusion $\supseteq $.

However we do not know how is the general situation.

\subsubsection{Large antichains and maximal elements}
In this section we discuss the existence of large antichains with respect to $ \pol $ among finite sets.
For sets that are infinite and coinfinite, see theorem \ref{maximalinfinite}.

As a first remark, notice that by corollary \ref{reductioniffequivalence} every $\Sigma_m$ is an antichain, which has cardinality $\kappa $ by \cite[exercise I.4.8]{hartsh1977} if $m\ge 3$.
Moreover, using an argument as in the proof of lemma \ref{020820221452}, we can find antichains that extend across all $\Sigma_m$ and are big, namely contain an open set, in each $\Sigma_m$.
In fact, while there are no $ \pol $-minimal polynomial classes of finite sets but $[\emptyset ]$ (see theorem \ref{030820221312}), our argument proves that most classes are $ \pol $-maximal.

\begin{theorem} \label{propactwo}
There exists an antichain $ \mathcal C $ in $\bigcup_{m\ge 2}\Sigma_m$ such that $ \mathcal C \cap\Sigma_m$ contains an open set of $\Sigma_m$ for every $m$.
In particular, for $m\ge 3$ every $ \mathcal C \cap\Sigma_m$ has cardinality $\kappa $.
\end{theorem}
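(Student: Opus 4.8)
The plan is to exhibit $\mathcal C$ explicitly. Set $\mathcal C\cap\Sigma_2=\Sigma_2$, and for $m\ge 3$ let $\mathcal C\cap\Sigma_m$ consist of those $[A]\in\Sigma_m$ such that $A$ is \emph{not} of the form $P^{-1}(B)$ for any polynomial $P$ with $\deg P\ge 2$ and any $B$ with $2\le\card (B)<m$; then put $\mathcal C=\bigcup_{m\ge 2}(\mathcal C\cap\Sigma_m)$. This is well defined on classes, since if $A'\eqpol A$ and $A=P^{-1}(B)$ then $A'=(PL)^{-1}(B)$ for the linear $L$ with $A'=L^{-1}(A)$, and $\deg (PL)=\deg P$.

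First I would check that $\mathcal C$ is an antichain. Two distinct classes inside a single $\Sigma_m$ are incomparable by corollary \ref{reductioniffequivalence}. If $[A]\in\mathcal C\cap\Sigma_m$ and $[B]\in\mathcal C\cap\Sigma_n$ with $n<m$, then by proposition \ref{incrcard}(1) the only relation that could hold is $[A]\pol [B]$; any polynomial $P$ with $A=P^{-1}(B)$ has $\deg P\ge 2$ by \eqref{conditiong} (as $m/n>1$), and $\card (B)=n\ge 2$ since $n\ge 2$ in our union, so $A=P^{-1}(B)$ contradicts the definition of $\mathcal C\cap\Sigma_m$. Thus $\mathcal C$ is an antichain. (Restricting to $n\ge 2$ is essential and harmless: every finite set is a nonlinear preimage of a singleton, so allowing $n=1$ would make $\mathcal C$ empty, but $\Sigma_1$ is outside $\bigcup_{m\ge 2}\Sigma_m$, so it plays no role.)

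The core is that each $\mathcal C\cap\Sigma_m$ contains a nonempty open subset of $\Sigma_m$ ($\mathcal C\cap\Sigma_2=\Sigma_2$ is trivially open). Its complement in $\Sigma_m$ is $\bigcup_{2\le n<m}D_{m,n}$ where $D_{m,n}=\{[A]\in\Sigma_m\mid\exists [B]\in\Sigma_n,\ A\pol B\}$, and $D_{m,n}=\bigcup_\gamma D_{m,n}^\gamma$ over the finitely many $\gamma$ allowed by \eqref{conditiong}, with $D_{m,n}^\gamma$ the image under the first projection of the incidence set $Z_{m,n}^\gamma=\{([A],[B])\in\Sigma_m\times\Sigma_n\mid\text{some polynomial of degree }\gamma\text{ reduces }A\text{ to }B\}$. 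By an argument as in the proof of theorem \ref{bidimvar}, now keeping the coordinates of $B$ as further variables in the polynomial equations, $Z_{m,n}^\gamma$ is a subvariety of $\Sigma_m\times\Sigma_n$. Its fibre over each $[B]\in\Sigma_n$ under the second projection is $\mathcal A_m^\gamma([B])$, which by (the proof of) theorem \ref{030820221312} has dimension $\gamma-1-(n\gamma-m)$ independently of $[B]$; hence $\dim Z_{m,n}^\gamma\le (n-2)+\gamma-1-(n\gamma-m)=m-3+n-\gamma(n-1)$, and since $\gamma\ge 2$ and $n\ge 2$ this is at most $m-3+2-n\le m-3$. By proposition \ref{180920220929} the first projection $Z_{m,n}^\gamma\to\Sigma_m$ has finite fibres, so $\dim D_{m,n}^\gamma=\dim Z_{m,n}^\gamma\le m-3$. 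Therefore $\Sigma_m\setminus(\mathcal C\cap\Sigma_m)$ lies in $W=\bigcup_{n,\gamma}\overline{D_{m,n}^\gamma}$, a finite union of subvarieties of dimension $\le m-3<m-2=\dim\Sigma_m$; as $\Sigma_m$ is irreducible, $W$ is a proper closed subvariety, and the nonempty dense open set $\Sigma_m\setminus W$ is contained in $\mathcal C\cap\Sigma_m$.

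Finally, for $m\ge 3$ the variety $\Sigma_m$ is irreducible of dimension $m-2\ge 1$, so the nonempty open subset $\Sigma_m\setminus W$ of $\mathcal C\cap\Sigma_m$ has cardinality $\kappa$ (cut it down by an irreducible curve through one of its points; a curve over $k$ has $\kappa$ points — compare the cardinality of $\Sigma_m$ itself, recorded earlier via \cite[exercise I.4.8]{hartsh1977}), whence $\card (\mathcal C\cap\Sigma_m)=\kappa$. I expect the only genuine obstacle to be the dimension estimate $\dim D_{m,n}^\gamma\le m-3$: for a single target class $[B]$ the set of its $m$-element predecessors can already be $(m-3)$-dimensional, so one must rule out that letting $[B]$ range over all of $\Sigma_n$ inflates the union to dimension $m-2$, which would make $D_{m,n}$ dense and destroy the construction. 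This is exactly where proposition \ref{180920220929} — that each finite class lies below only finitely many classes — is used, in combination with $\gamma\ge 2$.
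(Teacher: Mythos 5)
Your proof is correct, and your antichain is in fact the same as the paper's: the condition ``$A$ is not a nonlinear preimage of any $B$ with $2\le \card (B)<m$'' is exactly ``$[A]$ is $\pol$-maximal below the unique element of $\Sigma_1$'', which is how the paper phrases $\mathcal C$. The difference lies in how the set of non-maximal classes is bounded in dimension. The paper works entirely upstairs: it keeps $b_1,\ldots ,b_n$ and the coefficients $c_0,\ldots ,c_{\gamma }$ as coordinates, computes the Jacobian of the blocks \eqref{07092022} to get $\dim ( \mathcal W_{ \vec \ell \vec s \vec i })=m+n+\gamma +1-n\gamma $, and needs $(n-1)\gamma >n+1$ for this to be $<m$; the three failing cases $(n,\gamma )\in\{ (2,2),(2,3),(3,2)\} $ are then handled separately via theorem \ref{030820221312}, theorem \ref{twontwo}(1) and corollary \ref{170920221716}. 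You instead pass to the incidence variety $Z_{m,n}^{\gamma }\subseteq\Sigma_m\times\Sigma_n$ and apply the fiber dimension theorem to the second projection, whose fibers $ \mathcal A_m^{\gamma }([B])$ have dimension $\gamma -1-(n\gamma -m)$ by theorem \ref{030820221312}; the resulting bound $m-2-(n-1)(\gamma -1)\le m-3$ is uniform in $n\ge 2$, $\gamma\ge 2$, so the case analysis and the exceptional-class machinery of section \ref{excclasses} disappear. What this buys is a cleaner count; what it costs is the need to justify that $Z_{m,n}^{\gamma }$ is a subvariety of $\Sigma_m\times\Sigma_n$ and that the fiber dimension theorem applies there, i.e.\ that $\Sigma_m\times\Sigma_n$ is the geometric quotient of $D(f_m)\times D(f_n)$ by the product action --- true, and provable by eliminating $c$ as in the proof of theorem \ref{bidimvar} to get equations in $(a,b)$ jointly and checking invariance under $ \mathcal L \times \mathcal L $ and $Sym_m\times Sym_n$, but worth a sentence. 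Two small remarks: the hard analytic input (lemma \ref{020820221452}, hence the enriched Vandermonde claim) is still present in your argument, merely imported through theorem \ref{030820221312}; and proposition \ref{180920220929} is not actually needed for the upper bound on $\dim (D_{m,n}^{\gamma })$, since the image of a morphism never exceeds the dimension of its source --- finiteness of the fibers only upgrades the inequality to an equality.
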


\begin{proof}
The antichain $ \mathcal C $ consists of all elements in every $\Sigma_m$ that are maximal, below the unique element of $\Sigma_1$, with respect to $ \pol $.
It is therefore enough to prove that for every $m\ge 3$ the elements of $\Sigma_m$ that are not $ \pol $-maximal below the $1$-element sets are contained in a proper subvariety of $\Sigma_m$; equivalently, that the set
\begin{multline*}
\mathcal S =\{ (a_1,\ldots ,a_m)\in D(f_m)\mid \{ a_1,\ldots ,a_m\} \text{ is polynomially reducible to a } \\
n \text{-element set for some } n\ge 2\}
\end{multline*}
is contained in a proper subvariety of $D(f_m)$.
The tuple $(a_1,\ldots ,a_m)$ is in $ \mathcal S $ if and only if there exist $n\ge 2$ and $\gamma $ satisfying \eqref{conditiong} such that $\{ a_1,\ldots ,a_m\} $ is reducible to some member of $ \mathcal P_n$ by a polynomial of degree $\gamma $.

Consider again the system constituted by the blocks \eqref{07092022}, for $j\in\{ 1,\ldots ,m\} $, but now in the variables $a_1,\ldots ,a_m,b_1,\ldots ,b_n,c_0,\ldots ,c_{\gamma +1}$.
This defines a subvariety $ \mathcal W_{ \vec \ell \vec s \vec i }$ of $D(f_m)\times D(f_n)\times k^{\gamma +1}$, which depends on the choice of the sequences $ \vec \ell , \vec s $ as in condition $(*)$ of the proof of lemma \ref{020820221452}, but also of the sequence $ \vec i =(i_1,\ldots ,i_m)$ such that $\{ 1,\ldots ,n\} =\{ i_1,\ldots ,i_m\} $.
Then $ \mathcal S $ is the union of the projections on $D(f_m)$ of the images of the varieties $ \mathcal W_{ \vec \ell \vec s \vec i }$ under any permutations of $a_1,\ldots ,a_m$.

Now we determine $\dim ( \mathcal W_{ \vec \ell \vec s \vec i })$, again by computing the Jacobian.
This is as the Jacobian discussed in the proof of lemma \ref{020820221452}, with $n\gamma $ rows, except that every row is longer, since it contains also the derivatives with respect to $b_1,\ldots ,b_n$.
In particular, it is non-singular at every point of $ \mathcal W_{ \vec \ell \vec s \vec i }$.
Therefore $\dim ( \mathcal W_{ \vec \ell \vec s \vec i })=m+n+\gamma +1-n\gamma <m$ if and only if
\begin{equation} \label{23102022}
(n-1)\gamma >n+1.
\end{equation}
In this case, the projection of $ \mathcal W_{ \vec \ell \vec s \vec i }$ on $D(f_m)$ has also dimension less than $m$.

It remains to deal separately with the cases when \eqref{23102022} fails, that is:
\[
n=2,\gamma =2,\quad n=3,\gamma =2,\quad n=2,\gamma =3.
\]
When $n=2$ use theorem \ref{030820221312} recalling that $\Sigma_2$ contains just one element; when $\gamma =2$ use theorem \ref{twontwo}(1) and corollary \ref{170920221716}.
\end{proof}

\section{Sets that are infinite and coinfinite} \label{180920220956}
We discuss now subsets of $k$ that are infinite and coinfinite.
Recall indeed from proposition \ref{cardinality} that such sets are $ \pol $-incomparable with sets that are finite or cofinite; moreover if the infinite sets $A,B$ are $ \pol $-comparable, then
\begin{equation} \label{tornacardinalita}
\card (A)= \card (B),\qquad \card (k\setminus A)= \card (k\setminus B).
\end{equation}

By the remark after proposition \ref{orderedfields}, the class of singletons and the class of their complements are maximal elements with respect to $ \pol $.
There are as many other maximal classes as there can be.

\begin{theorem} \label{maximalinfinite}
There exist $2^{\kappa }$ maximal elements in $\Sigma_{\kappa }\cap \check{\Sigma }_{\kappa }$.
\end{theorem}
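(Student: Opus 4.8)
The plan is to build $2^{\kappa}$ sets in $\mathcal P_{\kappa}\cap\check{\mathcal P}_{\kappa}$ whose polynomial classes are $\pol$-maximal and which realise $2^{\kappa}$ pairwise distinct classes; the opposite inequality is automatic, since $\card(\mathcal P_{\kappa}\cap\check{\mathcal P}_{\kappa})=2^{\kappa}$. The first ingredient is a convenient sufficient condition for maximality. Call a set $A$ \emph{$P$-saturated} if $A=P^{-1}(P(A))$, equivalently $P^{-1}(P(a))\subseteq A$ for all $a\in A$. I claim that if $A\notin\{\emptyset ,k\}$ is not $P$-saturated for any polynomial $P$ with $\deg P\ge 2$, then $[A]$ is $\pol$-maximal. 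Indeed, suppose $A\pol B$, say $A=P^{-1}(B)$; then $P$ is non-constant (a constant reduction would give $A\in\{\emptyset ,k\}$), and the equality $A=P^{-1}(B)$ exhibits $A$ as a union of fibres of $P$, i.e.\ $A$ is $P$-saturated, so by hypothesis $\deg P=1$. But then $P^{-1}\in \mathcal L$ is a polynomial and $B=P(A)=(P^{-1})^{-1}(A)$, so $B\pol A$; hence $A\eqpol B$, and $[A]$ is maximal.

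For the construction, note that since $k$ is infinite there are exactly $\kappa$ polynomials of degree $\ge 2$; enumerate them as $(P_{\alpha})_{\alpha<\kappa}$. I will use the elementary fact (already exploited in the proof of proposition \ref{linearreduction}) that for $P$ of degree $d\ge 2$ at most one value $y\in k$ has $P^{-1}(y)$ a singleton --- this forces $P-y=c(X-a)^{d}$, which in characteristic $0$ can happen for at most one $y$ --- so that all but at most one of the $\kappa$ fibres of $P$ have at least two elements. Now build by recursion on $\alpha<\kappa$ pairwise distinct points of $k$ as follows: at stage $\alpha$, fewer than $\kappa$ points having been committed so far and hence lying in fewer than $\kappa$ fibres of $P_{\alpha}$, choose a fibre $P_{\alpha}^{-1}(y_{\alpha})$ of size $\ge 2$ disjoint from all committed points (possible, as $\kappa$ fibres of $P_{\alpha}$ have size $\ge 2$ while fewer than $\kappa$ are met by committed points), pick distinct points $a_{\alpha},b_{\alpha}$ in it, and pick two further fresh points $f_{\alpha}^{0},f_{\alpha}^{1}$. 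Set $D_{\mathrm{in}}=\{a_{\alpha}:\alpha<\kappa\}$, $D_{\mathrm{out}}=\{b_{\alpha}:\alpha<\kappa\}$, $F=\{f_{\alpha}^{i}:\alpha<\kappa,\ i<2\}$ and $R=k\setminus(D_{\mathrm{in}}\cup D_{\mathrm{out}}\cup F)$; these four sets are pairwise disjoint and the first three have cardinality $\kappa$. For every $S\subseteq F$ put $A_{S}=D_{\mathrm{in}}\cup R\cup S$. Then $A_{S}$ contains $a_{\alpha}$ and omits $b_{\alpha}$, two points of the fibre $P_{\alpha}^{-1}(y_{\alpha})$, so $A_{S}$ is not $P_{\alpha}$-saturated for any $\alpha$ and thus $[A_{S}]$ is $\pol$-maximal by the first step; moreover $D_{\mathrm{in}}\subseteq A_{S}$ and $D_{\mathrm{out}}\subseteq k\setminus A_{S}$ give $\card(A_{S})=\card(k\setminus A_{S})=\kappa$, so $A_{S}\in \mathcal P_{\kappa}\cap\check{\mathcal P}_{\kappa}$.

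Finally I would count classes. The $2^{\kappa}$ sets $A_{S}$ are pairwise distinct, since distinct $S$ differ on $F$. Every $\eqpol$-class has at most $\kappa$ members, because $B\eqpol A$ gives $B=Q^{-1}(A)$ for some $Q\in k[X]$ and $\card(k[X])=\kappa$; hence, as $2^{\kappa}>\kappa$, the sets $A_{S}$ realise $2^{\kappa}$ distinct classes, all of which are $\pol$-maximal. The step I expect to need the most care is the bookkeeping of the recursion: interleaving the reservation of the free points $f_{\alpha}^{i}$ with the diagonalisation against the $P_{\alpha}$'s so that a genuinely free set $F$ of full cardinality $\kappa$ survives, and checking that ``fewer than $\kappa$ points committed at stage $\alpha$'' stays correct when $\kappa$ is singular --- which only uses that every ordinal below $\kappa$ has cardinality below $\kappa$, so that at most $4\cdot\card(\alpha)<\kappa$ commitments have been made at stage $\alpha$.
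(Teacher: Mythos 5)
Your proposal is correct and follows essentially the same route as the paper's proof: diagonalise against an enumeration $(P_\alpha)_{\alpha<\kappa}$ of the polynomials of degree at least $2$ by reserving, at each stage, two points $a_\alpha,b_\alpha$ in a common fibre of $P_\alpha$ (one forced into $A$, one forced out, so that $A\ne P_\alpha^{-1}(P_\alpha(A))$), together with free points used to produce $2^\kappa$ variants, and then observe that each polynomial class has at most $\kappa$ members. The only cosmetic differences are that you spell out the "non-saturation implies maximality" criterion explicitly and reserve two free points per stage instead of the paper's one.
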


\begin{proof}
Recall that if $A \lpol B$ then there exists a polynomial $P$ of degree at least $2$ reducing $A$ to $B$.
Using remark \ref{04042022732}(2), it is enough to build $2^{\kappa }$ inequivalent sets $A\in \mathcal P_{\kappa }\cap \check{ \mathcal P }_{\kappa }$, such that, for every polynomial $P$ of degree at least $2$, $A\ne P^{-1}(P(A))$.

Let $\{ P_{\alpha }\}_{\alpha <\kappa }$ be an enumeration of all polynomials of degree at least $2$.
By recursion, let $\alpha <\kappa $ and assume that elements $a_{\beta },b_{\beta },c_{\beta }\in k$, for every $\beta <\alpha $, have been defined such that all elements $a_{\beta },b_{\beta },c_{\beta }$ for $\beta <\alpha $ are distinct and with the property that $\forall\beta <\alpha\ P_{\beta }(a_{\beta })=P_{\beta }(b_{\beta })$.

Given any $d\in k$, the multiple roots of the polynomial $P_{\alpha }(X)-d$ are also roots of $P_{\alpha }'(X)$.
Since $ \card (k\setminus P_{\alpha }(\{ a_{\beta },b_{\beta },c_{\beta }\}_{\beta <\alpha }))=\kappa $, there must exist $d\in k\setminus P_{\alpha }(\{ a_{\beta },b_{\beta },c_{\beta }\}_{\beta <\alpha })$ such that $P_{\alpha }(X)-d$ has no multiple roots, in particular $P_{\alpha }^{-1}(\{ d\} )$ contains at least two elements: call them $a_{\alpha },b_{\alpha }$, fix also any $c_{\alpha }\in k\setminus (\{ a_{\beta },b_{\beta },c_{\beta }\}_{\beta <\alpha }\cup\{ a_{\alpha },b_{\alpha }\} )$.

Let now $A$ be any set containing $\{ a_{\alpha }\}_{\alpha <\kappa }$ and disjoint from $\{ b_{\alpha }\}_{\alpha <\kappa }$.
By construction, $\forall\alpha<\kappa ,b_{\alpha }\in P_{\alpha }^{-1}(P_{\alpha }(A))\setminus A$, so that $A$ satisfies the requirement.

Finally, there are $2^{\kappa }$ such sets that are pairwise inequivalent.
Indeed, there are $2^{\kappa }$ sets of the form $\{ a_{\alpha }\}_{\alpha <\kappa }\cup C$, where $C\subseteq \{ c_{\alpha }\}_{\alpha <\kappa }$; since every polynomial class has at most $\kappa $ elements, there is a subfamily of cardinality $2^{\kappa }$ of inequivalent such sets.
\end{proof}

While theorem \ref{maximalinfinite} states that many subsets of $k$ are maximal with respect to $ \pol $, the argument of the proof can be used to show in fact that in the countable case most subsets of $k$ are maximal, from a topological viewpoint.
This can be viewed as an analogue for infinite sets of the fact that, geometrically, most polynomial classes of finite sets are $ \pol $-maximal below the unique element of $\Sigma_1$, as established in the proof of theorem \ref{propactwo}.

Endow $ \mathcal P (k)$ with the topology whose basis is given by the sets
\[
U_{u_1\ldots u_n}^{v_1\ldots v_m}=\{ A\in \mathcal P (k)\mid u_1,\ldots ,u_n\in A,v_1\ldots v_m\notin A\}
\]
for any choice of distinct $u_1,\ldots ,u_n,v_1,\ldots ,v_m\in k$.
When $k$ is countable, this topology is homeomorphic to the Cantor space.

\begin{theorem} \label{050420221932}
Assume that $k$ is countable.
Then the family of $ \pol $-maximal subsets of $k$ is a comeagre subset of $ \mathcal P (k)$.
\end{theorem}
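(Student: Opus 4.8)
The plan is to show that the \emph{non}-$\pol$-maximal subsets of $k$ form a meagre subset of $\mathcal P(k)$, so that the $\pol$-maximal ones are comeagre. The argument is the Baire-category counterpart of the transfinite construction in theorem \ref{maximalinfinite}.

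\textbf{Step 1: a combinatorial form of non-maximality.} I would first observe that if $A$ is not $\pol$-maximal, then there is $B$ with $A \lpol B$; by the observation opening the proof of theorem \ref{maximalinfinite} such $B$ is witnessed by a polynomial $P$ of degree at least $2$ with $A = P^{-1}(B)$, and then $B = P(A)$ by remark \ref{04042022732}(2), so $A = P^{-1}(P(A))$. The two sets $\emptyset, k$ also satisfy $A = P^{-1}(P(A))$ for every non-constant $P$, and they are not $\pol$-maximal either by remark \ref{firstremarks}(2). Hence, writing
\[
C_P = \{ A \in \mathcal P(k) \mid A = P^{-1}(P(A)) \},
\]
every set that is not $\pol$-maximal lies in $\bigcup_{\deg P \ge 2} C_P$. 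Since $k$ is countable the polynomial ring $k[X]$ is countable, so it suffices to prove that each $C_P$ is nowhere dense.

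\textbf{Step 2: $C_P$ is closed with empty interior.} The condition $A = P^{-1}(P(A))$ says precisely that $A$ is a union of fibres of $P$, i.e.\ that $P(x) = P(y)$ and $y \in A$ imply $x \in A$. Therefore $\mathcal P(k) \setminus C_P = \bigcup U_y^x$, the union taken over all pairs $x \ne y$ with $P(x) = P(y)$, which exhibits $C_P$ as closed. For the empty interior, fix a basic open set $U = U_{u_1 \ldots u_n}^{v_1 \ldots v_m}$ and produce a point of $U \setminus C_P$: exactly as in the proof of theorem \ref{maximalinfinite}, the multiple roots of $P(X) - d$ lie among the finitely many roots of $P'$, so there are only finitely many critical values, and since $k$ is infinite one may choose $d$ that is neither a critical value nor an element of $P(\{u_1, \ldots, u_n, v_1, \ldots, v_m\})$. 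Then $P^{-1}(\{d\})$ consists of $\deg P \ge 2$ distinct points, none among the $u_i$ or $v_j$; pick two of them, $x \ne y$, and set $A = \{u_1, \ldots, u_n, y\}$. Then $A \in U$, while $P(x) = P(y) = d$ with $y \in A$ and $x \notin A$ give $A \notin C_P$. As $U$ was arbitrary, $C_P$ has empty interior, hence is nowhere dense.

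\textbf{Step 3: conclusion.} By Steps 1 and 2, $\bigcup_{\deg P \ge 2} C_P$ is a countable union of nowhere dense sets, so its complement is comeagre in $\mathcal P(k)$; by Step 1 this complement is contained in the family of $\pol$-maximal subsets of $k$, which is therefore comeagre. The only point requiring care is Step 1 — making sure that non-maximality forces a genuinely non-linear reduction and hence the identity $A = P^{-1}(P(A))$ — after which the argument is essentially a verbatim, Baire-category reprise of theorem \ref{maximalinfinite}; the real work, such as it is, is the nowhere-density estimate in Step 2, and it is no harder there than here.
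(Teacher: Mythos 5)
Your proposal is correct and is essentially the paper's own argument in complementary form: the paper shows that for each polynomial $P$ of degree at least $2$ the set of $A$ with $A\ne P^{-1}(P(A))$ is open and dense and intersects over the countably many such $P$, while you show the equivalent statement that each $C_P=\{A\mid A=P^{-1}(P(A))\}$ is closed and nowhere dense and take the countable union; the density/empty-interior witness (choosing a non-critical value $d$ avoiding $P(\{u_i,v_j\})$ and splitting a fibre between $A$ and its complement) is the same in both. Your Step 1, reducing non-maximality to membership in some $C_P$ via the degree-$\ge 2$ observation and remark \ref{04042022732}(2), is exactly the reduction implicit in the paper's condition (a), so no gap remains.
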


\begin{proof}
Let $A\in \mathcal P (k)$.
Then $A$ is $ \pol $-maximal if and only if for every polynomial $P$ of degree at least $2$:
\begin{itemize}
\item[(a)] either $P$ is not a reduction of $A$ to $P(A)$
\item[(b)] or there exists a polynomial $Q$ such that $P(A)=Q^{-1}(A)$
\end{itemize}
We show that the family of $ \pol $-maximal sets contains a dense $G_{\delta }$ set.
Since there are countably many polynomials, it is then enough to show that, for any given $P$, the family of sets $A$ satisfying (a) is an open dense subset of $ \mathcal P (k) $.

Condition (a) can be written as $A\ne P^{-1}[P(A)]$; since $A\subseteq P^{-1}[P(A)]$ always holds, in turn (a) is equivalent to
\begin{equation} \label{040420221719}
\exists a\in k,(P(a)\in P(A)\land a\notin A)
\end{equation}
For fixed $a,P$, both conditions $P(a)\in P(A)$ and $a\notin A$ are clopen in the argument $A$: the latter by definition of the topology of $ \mathcal P (A)$; the former because it is equivalent to $a_1\in A\lor\ldots\lor a_l\in A$, where $\{ a_1,\ldots ,a_l\} =P^{-1}[P(\{ a\})]$, and again by the definition of the topology.
Therefore, for any fixed $P$, condition \eqref{040420221719} defines an open set.

To show density, fix distinct elements $u_1,\ldots ,u_n,v_1,\ldots ,v_m\in k$, in order to find some $A\in U_{u_1\ldots u_n}^{v_1\ldots v_m}$ satisfying \eqref{040420221719}.
Let $b\in k\setminus\{ P(u_1),\ldots ,P(u_n),P(v_1),\ldots ,P(v_m)\} $ such that $P^{-1}(\{ b\} )$ contains at least two distinct elements, and let $u,v$ be two of them.
Then any set $A$ containing $\{ u,u_1,\ldots ,u_n\} $ and disjoint from $\{ v,v_1,\ldots ,v_m\} $, that is any element of $U_{uu_1\ldots u_n}^{vv_1\ldots v_m}$, does.
\end{proof}

\begin{question} \label{050420221933}
Do there exist $ \pol $-minimal polynomial classes above $[\emptyset ],[k]$?
\end{question}

The same ideas of the proof of theorem \ref{050420221932} show that, when $k$ is countable, even if question \ref{050420221933} had a positive answer, from the topological point of view the family of $ \pol $-minimal subsets of $k$ above $\emptyset ,k$ is very small.

\begin{proposition}
Assume that $k$ is countable.
Then the family of subsets of $k$ that are $ \pol $-minimal above $\emptyset ,k$ is a meagre subset of $ \mathcal P (k)$.
\end{proposition}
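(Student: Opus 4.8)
The plan is to follow the pattern of the proof of theorem \ref{050420221932}, but to use the squaring polynomial to pass to a \emph{smaller} set instead of a larger one. I would first record the key reduction: if $A$ is $\pol$-minimal above $\emptyset,k$, then there is a polynomial $R$ with $\deg(R)\ge 2$ and $A=R^{-1}(A)$. To see this, note that such an $A$ is distinct from $\emptyset$ and from $k$, and that the set $B=(X^2)^{-1}(A)=\{x\in k\mid x^2\in A\}$ satisfies $B\pol A$; since $X\mapsto X^2$ is surjective, $B$ and $k\setminus B$ are both non-empty, so $[B]\notin\{[\emptyset],[k]\}$. By remark \ref{firstremarks}(2) we then have $[\emptyset],[k]\lpol[B]$, so minimality of $[A]$ forces $B\eqpol A$, hence $A\pol B$: there is a polynomial $Q$ with $A=Q^{-1}(B)=(Q(X)^2)^{-1}(A)$. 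Since $A\ne\emptyset,k$, the polynomial $Q$ is not constant, so $R(X):=Q(X)^2$ has degree $2\deg(Q)\ge 2$ and reduces $A$ to itself.

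Consequently the family of $\pol$-minimal subsets of $k$ above $\emptyset,k$ is contained in $\bigcup_R\mathcal N_R$, where $R$ ranges over the polynomials of degree at least $2$ (countably many, since $k$ is countable) and $\mathcal N_R=\{A\in\mathcal P(k)\mid R^{-1}(A)=A\}$. It then suffices to show that each $\mathcal N_R$ is nowhere dense. It is closed, being the intersection over $x\in k$ of the sets $\{A\mid x\in A\Leftrightarrow R(x)\in A\}$, each of which is clopen because membership of a fixed field element in the argument $A$ is a clopen condition in the topology on $\mathcal P(k)$ introduced before theorem \ref{050420221932}. For empty interior, given a basic open set $U_{u_1\ldots u_n}^{v_1\ldots v_m}$, I would argue as follows: since $\deg(R)=d\ge 2$, for every $b$ outside the finite set of critical values $R(c)$ with $R'(c)=0$ one has $\lvert R^{-1}(\{b\})\rvert=d\ge 2$; excluding in addition the finitely many values $R(u_i)$, $R(v_j)$ and $u_i$, pick such a $b$, then pick $u\in R^{-1}(\{b\})$ with $u\ne b$, and set $A=\{u,u_1,\ldots,u_n\}$. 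Then $A\in U_{u_1\ldots u_n}^{v_1\ldots v_m}$ (as $u\notin\{v_1,\ldots,v_m\}$, because $R(u)=b\ne R(v_j)$, while the $v_j$ are distinct from the $u_i$), yet $u\in A$ and $R(u)=b\notin A$, so $u\notin R^{-1}(A)$ and $A\ne R^{-1}(A)$. Hence $U_{u_1\ldots u_n}^{v_1\ldots v_m}\not\subseteq\mathcal N_R$, so $\mathcal N_R$ is nowhere dense, and $\bigcup_R\mathcal N_R$ is meagre.

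The main obstacle here is organisational rather than conceptual: in the second step one must make sure the single value $b$ (and with it the point $u$) simultaneously dodges all the finitely many constraints — the critical values of $R$, the values $R(u_i)$ and $R(v_j)$, and the points $u_i$ — so that $A$ genuinely lies in the prescribed basic open set and genuinely witnesses $A\ne R^{-1}(A)$; this is always possible because $k$ is infinite. One should also verify in the first step that the degenerate case $A\in\{\emptyset,k\}$ cannot occur, which is exactly the hypothesis that $A$ lies strictly above the classes $[\emptyset]$ and $[k]$.
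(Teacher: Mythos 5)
Your proof is correct and follows essentially the same route as the paper's: both reduce minimality to the condition that $A$ is a fixed point of $S^{-1}$ for some polynomial $S$ of degree at least $2$ (the paper takes $S=PQ$ ranging over all $P$ of degree $\ge 2$ and all $Q$, you specialise to $P(X)=X^2$ and get $S=Q(X)^2$), and both show each such fixed-point family is closed with empty interior by exhibiting, in any basic open set, a set containing some $u$ with $S(u)$ outside it. The only differences are cosmetic (your witness is a single finite set rather than a basic open subset), and your bookkeeping of the finitely many values that $b$ and $u$ must avoid is handled correctly.
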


\begin{proof}
Let $A\in \mathcal P (k)$.
Then $A$ is minimal above $\emptyset ,k$ if and only if for every polynomial $P$ of degree at least $2$ there exists a polynomial $Q$ such that $A=Q^{-1}[P^{-1}(A)]$.
In other words, such a subset of $ \mathcal P (k)$ can be written as $\bigcap_P\bigcup_Q \mathcal A_{PQ}$, where $ \mathcal A_{PQ}=\{ A\in \mathcal P (k)\mid A=Q^{-1}[P^{-1}(A)]\} $.
So it is enough to show that every $ \mathcal A_{PQ}$ is a closed set with empty interior.

The fact that each $ \mathcal A_{PQ}$ is closed holds since $A\in \mathcal A_{PQ}$ if and only if
\begin{equation} \label{080420221014}
\forall a\in A, a\in A\Leftrightarrow PQ(a)\in A
\end{equation}

To show that the complement is dense, fix distinct $u_1,\ldots ,u_n,v_1,\ldots ,v_m\in k$ in order to show that there exists $A\in U_{u_1\ldots u_n}^{v_1\ldots v_m}$ such that $A$ does not satisfy \eqref{080420221014}.
As the polynomial $PQ$ has degree at least $2$, pick any distinct $u,v\in k\setminus\{ u_1,\ldots ,u_n,v_1,\ldots ,v_m\} $ such that $v\notin PQ(\{ v_1,\ldots ,v_m\} )$ and $PQ(u)=v$.
Then any set $A$ containing $\{ u,u_1,\ldots ,u_n\} $ and disjoint from $\{ v,v_1,\ldots ,v_m\} $, that is any element of $U_{uu_1\ldots u_n}^{vv_1\ldots v_m}$, does.
\end{proof}

\subsection{Subfields of $ \C $}
In this section we add the extra assumption $k\subseteq \C $, so that we can exploit some properties of the Euclidean topology.

Note that, in addition to satisfy \eqref{tornacardinalita}, if $A,B\notin\{\emptyset ,k\} $ are $ \pol $-comparable then if one of them is bounded, the other one is bounded as well; if the complement of one of them is bounded, then the complement of the other one is bounded as well.
The reason is that the preimage under a non-constant polynomial function of a bounded (respectively, unbounded) subset of $ \C $ is bounded (respectively, unbounded).

\begin{proposition} \label{longzchain}
There exists a $ \pol $-chain of unbounded and counbounded subsets of $k$ that has order type $\zeta $.
\end{proposition}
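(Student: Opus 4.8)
The plan is to realise the chain by iterating the squaring map $P(X)=X^2$ on a suitably chosen ``radial'' subset of $k$. Fix once and for all a strictly increasing sequence $(c_k)_{k\ge 1}$ of reals with $c_1\ge 1$ and $c_{k+1}-c_k\to\infty$ (for instance $c_k=k^2$), put $\Sigma_0=\bigcup_{k\ge 1}[c_k,c_k+1]\subseteq\R$ and
\[
S_0=\{e^y\mid y\in\Sigma_0\}=\bigcup_{k\ge 1}[e^{c_k},e^{c_k+1}]\subseteq(0,\infty).
\]
For $n\in\Z$ let $S_n=\{s^{2^n}\mid s\in S_0\}$, so that $\log S_n=2^n\Sigma_0$ (here $s\mapsto s^{2^n}$ is the order‑automorphism of $(0,\infty)$ given by raising to the $2^n$‑th power, a root when $n<0$), and set
\[
A_n=\{z\in k\mid |z|\in S_n\}\subseteq k .
\]
Since $k$ contains $\Q$, and each $S_n$ as well as its complement in $(0,\infty)$ contains non‑degenerate intervals, hence rationals, of arbitrarily large size, every $A_n$ is unbounded and counbounded (in particular infinite and coinfinite). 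I will show that $\{[A_n]\mid n\in\Z\}$ is a $\pol$‑chain of order type $\zeta$.

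The easy half is that $A_n=P^{-1}(A_{n+1})$: for $z\in k$ one has $z\in P^{-1}(A_{n+1})$ iff $|z|^2\in S_{n+1}$ iff $|z|^2=s^2$ for some $s\in S_n$ iff $|z|\in S_n$, using that $t\mapsto t^2$ is injective on $(0,\infty)$ and $S_{n+1}=\{s^2\mid s\in S_n\}$. Hence $A_n\pol A_{n+1}$, and iterating, $A_n=(P^{m-n})^{-1}(A_m)$, so $A_n\pol A_m$ whenever $n\le m$.

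The core of the argument is strictness: $A_{n+1}\npol A_n$ for every $n$. It is enough to treat $n=0$, the general case being identical with $2^n\Sigma_0$ in place of $\Sigma_0$. Assume for contradiction that $Q$ is a polynomial with $A_1=Q^{-1}(A_0)$; then $d=\deg Q\ge 1$ (a constant polynomial would give $\emptyset$ or $k$). Let $c>0$ be the modulus of the leading coefficient of $Q$; since $Q(z)/(cz^d)\to 1$ as $|z|\to\infty$, the quantity $\varepsilon(r):=\sup_{|z|=r}\bigl|\log|Q(z)|-\log c-d\log r\bigr|$ tends to $0$ as $r\to\infty$. The features of $\Sigma_0$ I exploit are that it is a union of intervals (``blocks'') of length $1$ and that the ``gaps'' between consecutive blocks have lengths tending to $\infty$. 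Let $r$ range over the positive \emph{rationals} with $\tfrac12\log r\in[c_k,c_k+1]$ for a large fixed $k$; each such $r$ satisfies $\sqrt r\in S_0$, i.e.\ $r\in S_1$, hence $r\in A_1$. As $r$ varies, $\log c+d\log r$ sweeps an interval $J$ of length $2d\ge 2$, lying far enough out that the nearby gaps of $\Sigma_0$ exceed $2d$; consequently $J$ meets at most one block, so (as $2d>1$) $J$ contains a subinterval $J_0$ at distance at least some absolute positive constant from $\Sigma_0$. Enlarging $k$ if necessary so that $\varepsilon(r)$ is smaller than that constant whenever $\tfrac12\log r\ge c_k$, pick a rational $r$ in the allowed range with $\log c+d\log r\in J_0$ (possible since $\log$ of the rationals is dense). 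Then $\log|Q(r)|$ lies within $\varepsilon(r)$ of $J_0$, hence at positive distance from $\Sigma_0$, so $|Q(r)|\notin S_0$, i.e.\ $Q(r)\notin A_0$ and $r\notin Q^{-1}(A_0)=A_1$, contradicting $r\in A_1$.

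Finally, for $n<m$ we have $A_n\pol A_m$; were $A_m\pol A_n$ to hold as well we would get $A_n\eqpol A_{n+1}$, contradicting strictness, so in fact $A_n\lpol A_m$, the classes $[A_n]$ are pairwise distinct, and (being pairwise $\pol$‑comparable in exactly this way) $\{[A_n]\mid n\in\Z\}$ is $\pol$‑order‑isomorphic to $(\Z,\le)$, i.e.\ a chain of order type $\zeta$ of unbounded, counbounded subsets of $k$. The delicate point is the strictness step, and within it the need to defeat \emph{every} reducing polynomial at once — in particular the monomials $Q=cX^d$, for which $|Q|$ is genuinely radial; this is precisely why $\Sigma_0$ is built with blocks of bounded length and gaps growing without bound, so that no expanding affine image of a block (which has length $2d\ge 2$) can be absorbed into the unit‑length blocks of $\Sigma_0$ once the gaps have overtaken $2d$.
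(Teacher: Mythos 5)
Your proof is correct, and its skeleton coincides with the paper's: both realise the chain by pulling back a radially defined set through iterates of $P(X)=X^2$, so that the whole burden falls on ruling out a polynomial reduction in the reverse direction, and both ultimately exploit the asymptotic $|Q(z)|\sim c|z|^{d}$. Where you genuinely diverge is in the design of the radial set and in the mechanism for the strictness step. The paper takes the annuli $n^{2^m}\le |x|\le (n+1)^{2^m}$ with $n$ even; in logarithmic scale those blocks and gaps shrink, so no gap/length count is available, and the paper instead argues that $\hat Q$ maps the union of annuli onto the union of annuli, that each (connected) annulus must land inside a single target annulus, and that the resulting index map can only step by $\pm 1$, which eventually contradicts the growth of $|\hat Q|$. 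You build the set in log scale with unit blocks and gaps tending to infinity, and defeat an alleged reduction $Q$ of degree $d$ by exhibiting a single positive rational $r\in A_{n+1}$ for which $\log c+d\log r$ lands in a subinterval uniformly separated from $\log S_n$ --- possible because the swept interval has length $2d$, too long to sit inside one unit block yet too short to bridge a late gap; the quantifier order (first $Q$, hence $d$, then $k$ large) is handled correctly. Your route is the more elementary of the two: it needs only density of $\Q$ in $\R$ and interval arithmetic, dispensing with the connectedness argument in $\C$ and with the surjectivity $\hat Q(C_{m+1})=C_m$ that the paper extracts from density of $k$ in $\C$. One cosmetic point: your symbol $\Sigma_0$ collides with the paper's notation $\Sigma_{\lambda}$ for the quotients of $\mathcal P_{\lambda}$ and should be renamed.
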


\begin{proof}
For $n\in \N ,m\in \Z $ denote $ \mathcal C_n^m=\{ x\in \C \mid n^{2^m}\le |x|\le (n+1)^{2^m}\} $.
For $m\in \Z $ let
\[
C_m=\bigcup_{n\in \N } \mathcal C_{2n}^m,\qquad A_m=C_m\cap k,
\]
thus, if $P(X)=X^2$ then $\forall m\in \Z ,A_m=P^{-1}(A_{m+1})$.
Therefore it is enough to show that $\forall m\in \Z ,A_{m+1} \npol A_m$.

Toward contradiction, suppose that $Q(X)=cX^{\gamma }+\ldots\in k[X]$ is a polynomial of degree $\gamma $ such that $A_{m+1}=Q^{-1}(A_m)$, from which it follows also that $Q(A_{m+1})=A_m$.
Note that $\gamma\ge 2$.

Denote by $ \hat Q : \C \to \C $ the polynomial function extending $Q$ to the complex field, which we endow with the Euclidean topology.
So $ \hat Q $ is surjective and continuous.
By the density of $k$ in $ \C $ it follows that $ \hat Q (C_{m+1})=C_m$.
Since every $ \mathcal C_h^{m+1}$ is connected, it follows that for every $h$ there exists a unique $l$ such that
\begin{equation} \label{htol}
\hat Q ( \mathcal C_h^{m+1})\subseteq \mathcal C_l^m.
\end{equation}
If $\varphi : \N \to \N $ is the function assigning to each $h$ the unique $l$ as is \eqref{htol}, by the continuity if $ \hat Q $ with respect to the Euclidean topology we get that $\forall h\in \N ,\varphi (h+1)=\varphi (h)\pm 1$.
Since there must exist $h$ such that $\varphi (h)=0$, it follows that eventually $\varphi (h)\le h$.
In particular, eventually
\begin{equation} \label{zchains}
| \hat Q ((h+1)^{2^{m+1}})|\le (h+1)^{2^m}.
\end{equation}
However
\[
\forall\varepsilon\in \R^+,\exists M\in \R^+,\forall x\in \C ,|x|>M\Rightarrow \left |
\frac{| \hat Q (x)|}{|x|^{\gamma }} -|c|
\right |
<\varepsilon
\]
where the last inequality can be rewritten as $(|c|-\varepsilon )|x|^{\gamma }<| \hat Q (x)|<(|c|+\varepsilon )|x|^{\gamma }$.
Therefore, for $h$ big enough, $| \hat Q ((h+1)^{2^{m+1}})|> \frac{|c|}2 (h+1)^{2^{m+1}\gamma }$, contradicting \eqref{zchains}.
\end{proof}

\begin{remark} \label{060520221239}
The fact that for finite sets polynomial bireducibility is witnessed by a linear polynomial (see proposition \ref{linearreduction}) does not extends to sets that are infinite and coinfinite.

Indeed, using the notation of the proof of proposition \ref{longzchain}, let
\[
A=\bigcup_{m\in \Z } \mathcal C_2^{2m}\cap k,\qquad B=\bigcup_{m\in \Z } \mathcal C_2^{2m+1}\cap k.
\]
Letting $P(X)=X^2$, it follows that $A=P^{-1}(B),B=P^{-1}(A)$, whence $A \eqpol B$.
However there is no linear polynomial $Q$ such that $A=Q^{-1}(B)$, since otherwise, considering the polynomial function $ \hat Q : \C \to \C $ extending $Q$, by the density of $k$ in $ \C $ it would follow that $\bigcup_{m\in \Z } \mathcal C_2^{2m}= \hat Q^{-1} \left (
\bigcup_{m\in \Z } \mathcal C_2^{2m+1}
\right ) $, which is impossible.
\end{remark}

\section{Further questions}
We have restricted our study to the case of an algebraically closed field $k$ of characteristic $0$.
While it is apparent that the hypothesis of algebraic closure has been widely used, we point out that the fact that $k$ has characteristic $0$ is needed in the the proof of proposition \ref{linearreduction} and of lemma \ref{30112022}, and therefore plays a central role both for the combinatorics of $ \pol $ and for the geometric structure of the polynomial classes.

These observations raise the following questions (see also proposition \ref{orderedfields}).

\begin{question}
What can be said about $ \pol $ when $k$ is not algebraically closed?
\end{question}

\begin{question}
What can be said about $ \pol $ when $k$ has positive characteristic?
\end{question}

The polynomial functions $k\to k$, which we have chosen as the class of reductions inducing the preorder $ \pol $ we have studied in this paper, are the morphisms of the affine variety $k$.
Similarly, the polynomial functions $V\to V$ are the morphisms of $V$, for any affine variety $V$.
It is then natural to try to extend our study, starting perhaps with the simplest case, namely $k^n$.

\begin{question} \label{19112022}
Study the relation of polynomial reducibility on $k^n$ and on other affine varieties.
To what extent the structure of polynomial reducibility is sensible to the geometric properties of the variety?
\end{question}

An answer to the last part of question \ref{19112022} should be compared with the results of \cite{cammas}, where it is shown that most of the geometric properties of the variety are forgotten by the relation of continuous reducibility $\le_W$, whose structure turns out to depend only on few features of the variety, like the cardinality, the dimension, and the decomposition into irreducible components.

\section{Appendix: a variation on Vandermonde} \label{310720221554}
In this section we establish the claim in the proof of lemma \ref{020820221452}.

Given a matrix $M$, denote $R_1^M,R_2^M,\ldots $ the rows of $M$ and by $C_1^M,C_2^M,\ldots $ the columns of $M$.
Let also $M[i,j]$ be the entry of $M$ on the $i$-th row and $j$-th column.

\begin{definition} \label{040820221656}
Let $\gamma ,h$ be natural numbers, with $h\ge 1$.
Fix a sequence $ \vec s =(s_1,\ldots ,s_h)$ of natural numbers and a sequence $ \vec a=(a_1,\ldots ,a _h)$ of distinct variables.
Define the sequence $ \vec r =(r_1,\ldots ,r_h)$ by $ \begin{cases}
r_1=1 \\
r_{\ell +1}=r_{\ell }+s_{\ell }+1 \text{ for } 1\le\ell <h
\end{cases} $.
Assume that $R=\sum_{\ell =1}^hs_{\ell }+h\le\gamma +1$.

We call \emph{enriched Vandermonde matrix}, associated to the triple $(\gamma +1, \vec s , \vec a )$, the matrix $M$ with $R$ rows and $\gamma +1$ columns such that
\[
M[i,j]= \frac{d^s}{da_{\ell }^s} a_{\ell }^{j-1},
\]
where $\ell ,s$ are unique with
\begin{equation} \label{040820222128}
i=r_{\ell }+s,\qquad 0\le s\le s_{\ell }.
\end{equation}
\end{definition}

In other words, an enriched Vandermonde matrix is a matrix, with a number of rows not exceeding the number of columns, consisting of blocks of rows of the form \eqref{04082022}.
Notice that the matrix $M$ of the claim in lemma \ref{020820221452} is an enriched Vandermonde matrix (though its blocks are one row shorter, as in its definition the last line of each block has been erased).

\begin{theorem}
Let $M$ be an enriched Vandermonde matrix, with $R$ rows and $\gamma +1$ columns.
Then $rkM=R$.
\end{theorem}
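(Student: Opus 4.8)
The plan is to reinterpret the rows of $M$ as linear functionals on a space of polynomials and then reduce the statement to the elementary fact that Hermite interpolation data at distinct nodes is independent.

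First I would fix the ambient space $V=\{p\in k[X]\mid\deg p\le\gamma\}$, of dimension $\gamma+1$, with the monomial basis $1,X,\ldots,X^{\gamma}$. Reading off Definition~\ref{040820221656}, the row $R_i^M$ with $i=r_{\ell}+s$ and $0\le s\le s_{\ell}$ is exactly the coordinate vector, in the dual of this basis, of the functional $L_{\ell,s}\colon V\to k$, $L_{\ell,s}(p)=p^{(s)}(a_{\ell})$: indeed, for $p=\sum_{j=1}^{\gamma+1}c_{j-1}X^{j-1}$ one has $p^{(s)}(a_{\ell})=\sum_{j=1}^{\gamma+1}c_{j-1}\,\tfrac{d^{s}}{da_{\ell}^{s}}a_{\ell}^{j-1}=\sum_{j}M[i,j]\,c_{j-1}$. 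Hence $rk M$ is the dimension of the span of the $R$ functionals $\{L_{\ell,s}\}_{1\le\ell\le h,\,0\le s\le s_{\ell}}$ inside $V^{\ast}$, and it is enough to prove that these $R$ functionals are linearly independent.

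Next I would pass to a subspace of the right size. Since $R=\sum_{\ell=1}^{h}(s_{\ell}+1)\le\gamma+1$, the subspace $V'=\{p\in k[X]\mid\deg p\le R-1\}\subseteq V$ has dimension exactly $R$. A linear relation $\sum\mu_{\ell,s}L_{\ell,s}=0$ on $V$ restricts to the same relation among the $L_{\ell,s}|_{V'}$, so it suffices to show the $R$ functionals $L_{\ell,s}|_{V'}$ are linearly independent on the $R$-dimensional space $V'$; since their number equals $\dim(V')^{\ast}$, this is equivalent to the evaluation map $\Phi\colon V'\to k^{R}$, $\Phi(p)=\bigl(p^{(s)}(a_{\ell})\bigr)_{\ell,s}$, being injective. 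Finally I would prove this injectivity: if $\Phi(p)=0$ then, using that $k$ has characteristic $0$, each $a_{\ell}$ is a root of $p$ of multiplicity at least $s_{\ell}+1$, so $(X-a_{\ell})^{s_{\ell}+1}\mid p$; as the $a_{\ell}$ are pairwise distinct these coprime factors together divide $p$, whence $\prod_{\ell=1}^{h}(X-a_{\ell})^{s_{\ell}+1}$, of degree $R$, divides $p$. But $\deg p\le R-1<R$, so $p=0$, and $\Phi$ is injective.

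I do not expect a genuinely hard step here: this is the classical confluent (generalized) Vandermonde argument. The only points requiring care are the bookkeeping of the block offsets $r_{\ell}$ when identifying the rows of $M$ with the functionals $L_{\ell,s}$, and the bound $R\le\gamma+1$ from Definition~\ref{040820221656}, which is precisely what makes the final degree count $\deg p\le R-1<R$ work. One could alternatively exhibit a nonzero maximal minor by evaluating the confluent Vandermonde determinant in closed form, but the functional-theoretic argument is shorter and automatically covers the case $R<\gamma+1$ (where $M$ is not square).
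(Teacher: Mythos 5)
Your proof is correct, and it takes a genuinely different route from the paper's. The paper works with the kernel of $M$: it identifies the solution space $V\subseteq k^{\gamma+1}$ of the homogeneous system with the coefficient vectors of polynomials of degree at most $\gamma$ vanishing to order $s_\ell+1$ at each $a_\ell$, parametrises (the generic part of) $V$ by the map $(c,\alpha_1,\ldots,\alpha_\gamma)\mapsto c\prod(X-\alpha_r)$ restricted to the subvariety where the appropriate $\alpha_r$ are pinned to the $a_\ell$, and then invokes a dimension theorem for morphisms with finite fibers to get $\dim V=\gamma+1-R$. You instead dualise: you read the rows as the Hermite functionals $p\mapsto p^{(s)}(a_\ell)$, restrict to the $R$-dimensional subspace of polynomials of degree at most $R-1$ (where the count of functionals matches the dimension exactly), and reduce to injectivity of the evaluation map, settled by the divisibility argument $\prod_\ell(X-a_\ell)^{s_\ell+1}\mid p$ together with $\deg p\le R-1<R$. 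Both proofs ultimately rest on the same fact --- in characteristic $0$, prescribed vanishing of derivatives forces divisibility by the corresponding coprime powers --- but yours is pure linear algebra and needs no machinery from algebraic geometry, while the paper's fits the computation into the variety-dimension framework it uses elsewhere (e.g.\ in lemma \ref{020820221452} and theorem \ref{propactwo}). Your restriction to the truncated subspace is the right device for handling the non-square case $R<\gamma+1$ cleanly, and your explicit use of characteristic $0$ when passing from $p^{(s)}(a_\ell)=0$ to root multiplicity is a point the paper leaves implicit. All steps check out.
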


\begin{proof}
Use the notation of definition \ref{040820221656}.
Consider the homogeneous linear system associated to matrix $M$, in the unknowns $(c_0,c_1,\ldots ,c_{\gamma })$.
Then $(c_0,c_1,\ldots ,c_{\gamma })$ is a solution to the system if and only if the polynomial $P(X)=\sum_{\ell =0}^{\gamma }c_{\ell }X^{\ell }$ has $a_j$ as root of multiplicity at least $s_j+1$, since the equations in each block mean $P(a_j)=P'(a_j)=\ldots =P^{(s_j)}(a_j)=0$.
Let $V$ be the subvariety of $k^{\gamma +1}$ of the solutions of the system.
Then $V$ is a vector subspace of $k^{\gamma +1}$ of dimension $\gamma +1-rkM$, so it is enough to prove that $\dim (V)=\gamma +1-R$.

Let $\varphi :k^{\gamma +1}\to k^{\gamma +1}$ be the morphism defined by letting
\begin{multline*}
\varphi (c,\alpha_1,\ldots ,\alpha_{\gamma })= \\
=((-1)^{\gamma }cE_{\gamma }(\alpha_1,\ldots ,\alpha_{\gamma }),(-1)^{\gamma -1}cE_{\gamma -1}(\alpha_1,\ldots ,\alpha_{\gamma }),\ldots ,-cE_{1}(\alpha_1,\ldots ,\alpha_{\gamma }),c),
\end{multline*}
where $E_1,\ldots ,E_{\gamma }$ are the elementary symmetric functions, so that $\varphi (c,\alpha_1,\ldots ,\alpha_{\gamma })$ is the sequence of coefficients of the polynomial $c(X-\alpha_1)(X-\alpha_2)\cdot\ldots\cdot (X-\alpha_{\gamma })$.

Let $ \mathcal F =\{ F_1,\ldots ,F_h\} $ be a collection of disjoint subsets of $\{ 1,\ldots ,\gamma\} $ with $\card (F_j)=s_j+1$ for every $j\in\{ 1,\ldots ,h\} $, and let $W$ be the subvariety of $k^{\gamma +1}$, in the coordinates $c,\alpha_1,\ldots ,\alpha_{\gamma }$, obtained by setting $\alpha_r=a_j$ for every $r\in F_j,j\in\{ 1,\ldots ,h\} $.
Therefore $\dim (W)=\gamma+1-R$.
Now note that the fibers of $\varphi $ are finite and that $\varphi (W)=V$, so the result follows from \cite[corollary IV.3.8(2)]{perrin2008}.
\end{proof}

\end{document}